\setlist{topsep=3pt, itemsep=0pt,partopsep=0pt}
\newtheorem{thm}{Theorem}[section]
\newtheorem{prop}[thm]{Proposition}
\newtheorem*{ques*}{Question}
\newtheorem{lem}[thm]{Lemma}
\newtheorem{coveringThm}[thm]{Covering Theorem}
\theoremstyle{definition}
\newtheorem{examp}[thm]{Example}
\theoremstyle{remark}
\newtheorem{rmk}[thm]{Remark}
\newtheorem*{rmk*}{Remark}
\theoremstyle{plain}
\DeclareMathOperator\Spec{Spec}
\DeclareMathOperator\rank{rank}
\DeclareMathOperator\Char{char}
\DeclareMathOperator\Ann {Ann}
\DeclareMathOperator\Supp{Supp}
\DeclareMathOperator\im  {Im}
\let\geL\succeq \let\leL\preceq
\def\geQ{\geL_{\mathbb Q}}
\def\simQ{\sim_{\mathbb Q}}
\def\abs#1{\left\lvert#1\right\rvert}
\newcommand\oneoverp[1][]{{\mathchoice{\frac1{p#1}}{\smash{\frac1{p#1}}\vphantom{\hbox{f}}}{\smash{\frac1{p#1}}\vphantom{\raise-2pt\hbox{p}}}{\smash{\frac1{p#1}}\vphantom{\raise-1pt\hbox{p}}}}}%
\title[On canonical bundle formula]{%
	On canonical bundle formula for fibrations of curves with arithmetic genus one}
\author{Jingshan Chen}
\email{chjingsh@hbmzu.edu.cn}
\address{School of Mathematics and Statistics, Hubei Minzu University, Enshi, Hubei Province, China}
\author{Chongning Wang}
\email{chongningwang@hbmzu.edu.cn}
\address{School of Mathematics and Statistics, Hubei Minzu University, Enshi, Hubei Province, China}
\author{Lei Zhang}
\email{zhlei18@ustc.edu.cn}
\address{School of Mathematical Science, University of Science and Technology of China, Hefei 230026, P.R.China.}
	\keywords{canonical bundle formula, elliptic fibration, quasi-elliptic fibration, positive characteristic}
\begin{document}
		
		\begin{abstract}
		In this paper, we develop canonical bundle formulas for fibrations of relative dimension one in characteristic $p>0$. For such a fibration from a log pair $f\colon (X, \Delta) \to S$, if $f$ is separable, we can obtain a formula similar to the one due to Witaszek \cite{Wit21}; if $f$ is inseparable, we treat the case when $S$ is of maximal Albanese dimension. As an application, we prove that for a klt pair $(X,\Delta)$ with $-(K_X+\Delta)$ nef, if the Albanese morphism $a_X\colon X \to A$ is of relative dimension one, then $X$ is a fiber space over $A$.
		\end{abstract}
		\maketitle
		
		\section{Introduction}\label{sec:intro}
		The canonical bundle formula over the field $\mathbb{C}$ of complex numbers is developed to study fibrations whose general fibers have numerically trivial (log-\nobreak)canonical divisors.
		Roughly speaking, for a fibration $f\colon X\to S$ of projective varieties with certain mild singularities such that $K_X \sim_{\mathbb Q} f^*D$ for some divisor $D$ on $S$, the canonical bundle formula predicts that $D \sim_{\mathbb Q} K_S + \Delta_S$, where $\Delta_S$ contains the information of singular fibers and moduli of general fibers, and $(S, \Delta_S)$ is expected to have mild singularities (\cite{FM00, Amb04, Amb05}).
		The canonical bundle formula plays an important role in birational geometry, for example, in the proof of subadjunction and effectivity of pluricanonical systems (\cite{Kaw98, BZ16}).
		We refer the interested reader to \cite{FL20} for a nice survey.
		Remark that over $\mathbb{C}$, to derive the information about $\Delta_S$, the key ingredients are results from moduli theory and Hodge theory (variations of Hodge structure).
		
		In characteristic $p>0$, there are quite few results about canonical bundle formulas.
		For elliptic fibrations, by taking advantage of the moduli theory of elliptic curves, Chen-Zhang \cite{CZ15} proved that $\Delta_S$ is $\mathbb Q$-linearly equivalent to an effective divisor (denoted by $\Delta_S \geQ 0$ for short).
		Cascini-Tanaka-Xu \cite[Section~6.2]{CTX15} investigated fibrations of log canonical pairs $f\colon (X, \Delta) \to S$ of relative dimension one.  Assuming that the geometric generic fiber $(X_{\overline{K(S)}}\cong \mathbb{P}^1_{\overline{K(S)}}, \Delta_{\overline{K(S)}})$ has log canonical singularities, they proved similar results as in characteristic zero by use of the moduli theory of stable rational curves. However, the geometric generic fiber can be quite singular.
		For example, in characteristic $p<5$, there exist quasi-elliptic fibrations (\cite[Chapter~7]{Ba01}), whose general fibers are singular rational curves with arithmetic genus one.
		For a quasi-elliptic fibration $f\colon X\to S$, the relative canonical divisor $K_{X/S}$ is not necessarily $\mathbb Q$-linearly equivalent to an effective $\mathbb Q$-divisor.
		To treat fibrations fibered by wildly singular varieties (log pairs),
		Witaszek \cite{Wit21} obtained the following result.
		\begin{thm}[{\cite[Theorem~3.4]{Wit21}}]
			\label{thm:Wit}
			Let $(X, \Delta)$ be a projective log canonical pair defined over an algebraically closed field $k$ of characteristic $p > 0$,
			and let $f\colon X \to S$ be a fibration of relative dimension one such that $K_X + \Delta \sim_{\mathbb{Q}} f^*D$ for some $\mathbb{Q}$-Cartier $\mathbb{Q}$-divisor $D$ on $S$.
			Assume that the generic fiber is smooth.
			Then there exists a finite purely inseparable morphism $\tau \colon T \to S$ such that
			$$\tau^*D\sim_{\mathbb{Q}} t\tau^*K_S + (1-t)(K_T+\Delta_T)$$
			for some rational number $t\in [0,1]$ and an effective $\mathbb{Q}$-divisor $\Delta_T$ on $T$.
		\end{thm}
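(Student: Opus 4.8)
The plan is to reduce everything to a statement on the generic fibre and then to absorb the characteristic-$p$ pathology into the ramification of a purely inseparable cover. First I would restrict $K_X+\Delta\simQ f^*D$ to the generic fibre $X_\eta$ over $\eta=\Spec K(S)$. Since $f^*D|_{X_\eta}\simQ 0$, adjunction gives $K_{X_\eta}+\Delta_\eta\simQ 0$, so $(X_\eta,\Delta_\eta)$ is a log Calabi--Yau pair on a smooth projective curve. As $\Delta_\eta\ge 0$, comparing degrees forces $p_a(X_\eta)=g\in\{0,1\}$: for $g\ge 2$ the effective boundary cannot offset the positive degree $2g-2$ of $K_{X_\eta}$. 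Thus I only need to treat the genus-one case ($\Delta_\eta=0$) and the genus-zero case ($\deg\Delta_\eta=2$), in both of which the fibres carry a moduli invariant (the $j$-line for $g=1$; the configuration of the marked divisor $\Delta_\eta$ for $g=0$).

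Second, I would reformulate the target formula so as to see exactly what must be proved. Writing $K_T=\tau^*K_S+R_\tau$ for the ramification class of a finite purely inseparable $\tau\colon T\to S$ between normal projective varieties, the desired identity
\[
\tau^*D\simQ t\,\tau^*K_S+(1-t)(K_T+\Delta_T)
\]
rearranges to $\tau^*(D-K_S)\simQ (1-t)(R_\tau+\Delta_T)$. Since $R_\tau$ is $\mathbb Q$-linearly equivalent to an effective divisor ($R_\tau\simQ(p^e-1)$ times an effective class, as one checks from the local structure of Frobenius covers), the theorem is \emph{equivalent} to making $\tau^*(D-K_S)$ $\mathbb Q$-effective and exhibiting the witnessing effective divisor in the form $(1-t)(R_\tau+\Delta_T)$. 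The genus-dependent weight $t$ is then read off by restricting to a fibre and bookkeeping the degree of $K_{X/S}$ against $\Delta_\eta$.

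Third, I would split $D-K_S\simQ B+M$ into the discriminant (boundary) part $B$ and the moduli part $M$. The divisor $B\ge 0$ is built over the codimension-one points of $S$ from the log canonical thresholds of $(X,\Delta)$ along the fibres; this construction is insensitive to the characteristic, so $B\ge 0$ and $\tau^*B\ge 0$ for free. The genuine difficulty is $M$: over $\mathbb C$ one would invoke variation of Hodge structure to conclude $M$ is nef, but no such tool is available in characteristic $p$, and indeed $M$ need not be $\mathbb Q$-effective on $S$. This is where the cover enters. The failure of effectivity of $M$ is an inseparable phenomenon coming from the moduli map $\phi\colon S\dashrightarrow \overline M$ ($\overline M_{1,1}$ for $g=1$, a quotient of $\overline M_{0,n}$ for $g=0$); I would take $\tau\colon T\to S$ to be the purely inseparable cover that factors out the inseparable part of $\phi$, equivalently that trivializes the relevant power of the relative Frobenius of the family. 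After base change and normalization the variation of $X_T\to T$ becomes separable (or the family becomes isotrivial, whence $\tau^*M\simQ 0$), and the discrepancy between $\tau^*M$ and an honestly effective pullback from the moduli space is precisely of the shape that the large effective divisor $R_\tau$ can absorb.

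I expect the construction and analysis of $\tau$ in the third step to be the main obstacle. Concretely the hard points are: (i) producing a single purely inseparable $\tau$ that simultaneously rectifies the inseparability of the variation and any wild vertical behaviour of $(X,\Delta)$; (ii) proving that after this base change the moduli part is dominated by $R_\tau+\Delta_T$, i.e. replacing the Hodge-theoretic semipositivity of $M$ by a direct inseparable-geometry estimate; and (iii) keeping the wild ramification contributions to $B$ within the log canonical range so that $\Delta_T$ stays effective with coefficients $\le 1$. Once $\tau^*(D-K_S)$ is shown to be $\mathbb Q$-effective with the stated decomposition, solving for $t\in[0,1]$ and for $\Delta_T\ge 0$ is a formal degree computation on the fibres.
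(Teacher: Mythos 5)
Your step one (reduction to $p_a\in\{0,1\}$) is fine, and you correctly locate the difficulty in the moduli part; but the mechanism you build in steps two and three rests on a sign error that cannot be repaired. For a finite purely inseparable morphism $\tau\colon T\to S$ of normal varieties, the class $R_\tau:=K_T-\tau^*K_S$ is \emph{not} effective: by Proposition~\ref{prop:can-foliation} one has $\tau^*K_S\sim K_T+(p-1)\det\Omega^1_{T/S}$, so $R_\tau\sim-(p-1)\det\Omega^1_{T/S}$, and for the Frobenius cover $T=S^{\oneoverp}$ this reads $R_\tau\sim-(p-1)K_{S^{\oneoverp}}$, the negative of an effective divisor whenever $K_S$ is effective. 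Purely inseparable covers behave in the opposite way to separable ramified covers: they \emph{decrease} the canonical class (this is precisely why quasi-elliptic fibrations can have non-effective $K_{X/S}$, as recalled in the introduction). Consequently there is no ``large effective divisor $R_\tau$'' available to absorb anything, and your absorption step collapses. With the correct sign, the role of the cover is the reverse of what you assert: passing to $T$ helps because $K_T$ drops below $\tau^*K_S$, so that exhibiting $\tau^*D-t\tau^*K_S-(1-t)K_T$ as $\mathbb{Q}$-effective is \emph{weaker} than exhibiting $D-K_S$ as $\mathbb{Q}$-effective, not a statement that must be forced by extra ramification.

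The second gap is that your cover is built from the wrong object. The proof the paper invokes (Witaszek's, generalized in Theorem~\ref{thm:red-bach}) splits according to whether $(X_{\overline{K(S)}},\Delta_{\overline{K(S)}})$ is log canonical. If it is, the moduli-theoretic results of \cite{CTX15} (genus zero) and \cite{CZ15} (genus one) apply directly over $S$; one takes $T=S$ and $t=0$, and no cover is needed. If it is not, then since $(X_{K(S)},\Delta_{K(S)})$ is lc this can only happen because $\Delta$ has a horizontal component $T'$ whose generic point has residue field purely inseparable over $K(S)$ of degree $p^e$, with coefficient $a$ satisfying $ap^e>1$; then $\tau\colon T\to S$ is taken to be the normalization of $S$ in $K(T')$, and the formula follows from adjunction along $T'^\nu$ combined with Lemma~\ref{lem:log-adj} (used to cancel the negative term $-(1-a)T'|_{T'^\nu}$), followed by pushing down to $T$. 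Your proposal instead takes $\tau$ to ``factor out the inseparable part of the moduli map''; but in the non-lc case the marked divisor $\Delta_{\overline{K(S)}}$ is non-reduced, so no map to (a quotient of) $\overline{M}_{0,n}$ is defined at all, while in the lc case no cover is needed. Thus the cover you propose is unavailable exactly in the only case that forces a nontrivial $\tau$ and a nonzero weight $t$, and the key observation — that the inseparability to be exploited sits in the horizontal part of $\Delta$, not in the moduli map of the fibration — is missing from your argument.
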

		Witaszek noticed that if $(X_{\overline{K(S)}}, \Delta_{\overline{K(S)}})$ is not log canonical, then $\Delta$ has a horizontal component $T$ purely inseparable over $S$. By doing the base change $T\to S$ and applying the adjunction formula, he proved the above canonical bundle formula.
		Stimulated by Witaszek's observation, we attempt to treat fibrations of relative dimension one with singular general fibers, which happen only when $p=2$ or $3$. Here we remark that if $\dim S >1$, then the fibration $f$ is not necessarily a separable morphism; this means that the generic geometric fiber $X_{\overline{K(S)}}$ is not necessarily reduced. To treat inseparable fibrations, we apply the language of foliations, developed in \cite{PW22, JW21}, which makes it convenient to compare the canonical divisors under purely inseparable base changes.
		\smallskip
		
		First, we get a similar result for separable fibrations as Theorem~\ref{thm:Wit}.
		\begin{thm}[see Theorem~\ref{thm:sep-cb-formula}]\label{thm:intro:sep-cb-formula}
			Let $f\colon X\to S$ be a separable fibration of relative dimension one between normal quasi-projective varieties where $X$ is $\mathbb Q$-factorial.
			Let $\Delta$ be an effective $\mathbb{Q}$-divisor on $X$.
			Assume that
			\begin{enumerate}[\rm(C1)]
				\item $(X_{K(S)}, \Delta_{K(S)})$ is lc; and
				\item $K_X+\Delta\sim_{\mathbb{Q}} f^*D$ for some $\mathbb{Q}$-Cartier $\mathbb{Q}$-divisor $D$ on $S$.
			\end{enumerate}
			Then there exist finite purely inseparable morphisms $\tau_1\colon \bar{T} \to S$, $\tau_2\colon \bar{T}' \to \bar{T}$, an effective $\mathbb{Q}$-divisor $E_{\bar T'}$ on $\bar T'$ and rational numbers $a,b,c\geq 0$ such that
			$$\tau_2^* \tau_1^*D \sim_{\mathbb{Q}} a K_{\bar{T}'} + b\tau_2^*K_{\bar{T}} + c\tau_2^*\tau_1^*K_S + E_{\bar T'}.$$
			Moreover, if $(X_{K(S)}, \Delta_{K(S)})$ is klt, then $c \geq c_0$ for some positive number $c_0$ relying only on the maximal coefficient of prime divisors in $\Delta_{K(S)}$.
		\end{thm}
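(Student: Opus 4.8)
The plan is to reduce the statement to the generic fibre and then manufacture positivity by purely inseparable base changes along a distinguished horizontal divisor, adapting Witaszek's strategy to genus-one fibres. First I would restrict the linear equivalence (C2) to the generic fibre $X_\eta:=X_{K(S)}$. Since $f^*D$ is vertical, its restriction vanishes there, so $K_{X_\eta}+\Delta_\eta\sim_{\mathbb Q}0$ and $(X_\eta,\Delta_\eta)$ is a log Calabi--Yau pair on a curve of arithmetic genus one. Separability of $f$ forces $X_\eta$ to be geometrically reduced, whereas (C1) only provides log canonicity over $K(S)$. This splits the argument according to whether the geometric generic fibre $(X_{\overline{K(S)}},\Delta_{\overline{K(S)}})$ is log canonical. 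When it is---e.g.\ a smooth or nodal genus-one curve---the moduli theory of genus-one curves, in the spirit of Chen--Zhang \cite{CZ15}, already furnishes an effective moduli contribution and the formula holds with trivial covers ($c=1$, $a=b=0$). When it is not---the quasi-elliptic situation in $p\in\{2,3\}$, where $X_\eta$ is regular but geometrically cuspidal---Witaszek's observation applies: the failure of log canonicity is witnessed by a horizontal prime divisor $Z\subset X$, namely the closure of the cuspidal (hence purely inseparable) point of $X_\eta$, whose structure morphism $Z\to S$ is purely inseparable.

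In this second case I would pass to $Z$ by two purely inseparable covers. The first, $\tau_1\colon\bar T\to S$, is chosen so that after base change the cuspidal point becomes rational over $K(\bar T)$ and its closure is an honest (multi-)section of $X_{\bar T}\to\bar T$; normalising this section and trivialising its residual inseparability then produces the second cover $\tau_2\colon\bar T'\to\bar T$. Adjunction for the $\mathbb Q$-factorial pair $(X,\Delta)$ along the pulled-back horizontal divisor rewrites $\tau_2^*\tau_1^*D$, the restriction of $K_X+\Delta$, as $K_{\bar T'}$ plus a Different and a residual boundary. To compare canonical classes across the tower $\bar T'\to\bar T\to S$ I would invoke the foliation formalism of \cites{PW22,JW21}: each purely inseparable step contributes an effective ramification divisor, and regrouping the canonical contributions $K_{\bar T'}$, $\tau_2^*K_{\bar T}$ and $\tau_2^*\tau_1^*K_S$ with coefficients $a,b,c\ge 0$ and an effective remainder $E_{\bar T'}$ yields the asserted formula.

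For the final clause I would track the coefficient $c$, which records the part of the positivity of $D$ over $K_S$ that is \emph{not} supplied by base-change ramification. The klt hypothesis means that every coefficient of $\Delta_\eta$ is strictly below $1$; writing $\delta<1$ for the maximal one, the adjunction contributes to the coefficient of $K_S$ a gap bounded below in terms of $1-\delta$. Propagating this gap through the two covers gives $c\ge c_0>0$ with $c_0$ depending only on $\delta$, whereas at the lc boundary $\delta=1$ the bound degenerates to $0$, consistent with the first part.

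The step I expect to be the main obstacle is the inseparable adjunction itself. On the a priori singular, possibly non-normal horizontal divisor $Z$ one must make sense of the Different and verify that, after the two purely inseparable covers, the ramification and Different terms assemble into a genuinely effective $E_{\bar T'}$ with non-negative $a,b,c$. Controlling these signs---showing that the foliation ramification along the tower dominates any negativity coming from the self-intersection $Z|_Z$---is the crux, and it is precisely here that the $\mathbb Q$-factoriality of $X$ and the genus-one geometry of the fibre are used.
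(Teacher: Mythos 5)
Your overall plan---split according to the (geometric) generic fibre, and in the bad case pass to purely inseparable covers built from a horizontal inseparable divisor, then combine adjunction with the foliation-theoretic comparison of canonical classes---is the same as the paper's, which handles the smooth-fibre case by quoting Theorem~\ref{thm:Wit} and the non-smooth case by Theorem~\ref{thm:red-bach}. However, there are two genuine gaps. The first is your dichotomy: failure of log canonicity of $(X_{\overline{K(S)}},\Delta_{\overline{K(S)}})$ is \emph{not} equivalent to quasi-ellipticity. The generic fibre can be smooth (even $\mathbb P^1_{K(S)}$) while the geometric pair fails to be lc, namely when $\Delta$ has a horizontal component whose residue field is purely inseparable over $K(S)$, so that its coefficient gets multiplied by its inseparable degree after geometric base change and exceeds $1$. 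In that situation there is no cuspidal point, and the witness divisor must be taken to be that boundary component (this is precisely Witaszek's original case); as written, this case falls through both your case A and your case B. Conversely, your case A invokes only genus-one moduli in the spirit of \cite{CZ15}, but the geometrically lc case also includes genus-zero fibres with boundary, which requires the moduli of stable rational curves as in \cite{CTX15} (the paper sidesteps both by citing Theorem~\ref{thm:Wit} for every smooth-fibre case).

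The second, decisive gap is the step you yourself flag as the crux: you propose to show that the ramification of the inseparable tower \emph{dominates the negativity} of the self-intersection $Z|_Z$. No such domination is available---there is no a priori lower bound on $Z|_Z$ (nor on $T'|_{T'^\nu}$ upstairs)---and this is not how the paper proceeds. In the proof of Theorem~\ref{thm:red-bach} the self-intersection term is \emph{cancelled exactly}, not estimated: one writes down two identities, (i) adjunction along the horizontal conductor component $T'$ of the normalization $X'$ of $X_{S_1}$ (it is the conductor of this normalization, via Proposition~\ref{prop:compds}, that supplies the effective term, so the adjunction is performed on $X'$, not on $X$ along $Z$), and (ii) the log-adjunction identity of Lemma~\ref{lem:log-adj} applied to $T=\pi(T')\subset X$, which expresses $(1-\alpha)T|_{T^\nu}$ as $K_{T^\nu}+B_{T^\nu}-(K_X+\Delta)|_{T^\nu}$; then one takes the positive linear combination with weights $(1-\alpha)\gamma_2$ and $(p-1)\gamma_1+\alpha\gamma_2-1$ in which the $T'|_{T'^\nu}$ terms cancel. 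The explicit bound $c\ge\frac{1-\theta}{p(p-1)}$ in the klt case falls out of exactly these weights, so your final claim that the klt gap ``propagates through the two covers'' is contingent on this unresolved step rather than a consequence of your argument.
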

		
		For inseparable fibrations, we can only treat fibrations with the base $S$ being of maximal Albanese dimension (m.A.d.).
		We distinguish the cases according to whether the Albanese morphism $a_S$ of $S$ is separable or not and summarize the results in the following theorem.
		\begin{thm}[see Subsection~\ref{sec:proof-cbf}]\label{thm:main-thm-cbf}
			Let $X$ be a normal $\mathbb{Q}$-factorial projective variety and $\Delta$ an effective $\mathbb{Q}$-divisor on $X$.
			Let $f\colon X \to S$ be an inseparable fibration of relative dimension one onto a normal projective variety $S$.
			Assume that
			\begin{enumerate}[\rm(C1)]
				\item $(X_{K(S)}, \Delta_{K(S)})$ is lc;
				\item there exists a big open subset $S^{\circ}$ and a $\mathbb{Q}$-Cartier $\mathbb{Q}$-divisor $D^{\circ}$ on $S^{\circ}$ such that
				$(K_X+\Delta)|_{f^{-1}(S^{\circ})} \sim_\mathbb Q f^*D^{\circ}$;
				\item $S$ is of maximal Albanese dimension.
			\end{enumerate}
			Denote by $a_S\colon S \to A$ the Albanese morphism of $S$ and $D:=\overline{D^{\circ}}$ the closure divisor of $D^{\circ}$ in $S$. Then the following statements hold true:
			\begin{enumerate}[\rm(1), itemsep=2pt]
				\item If $a_S$ is separable, then $D - \frac{1}{2p}K_S \geQ 0$ {\rm(}$\mathbb Q$-linearly to an effective divisor{\rm)}. In particular, $\kappa(S,D) \geq \kappa(S)$.
				\item If $a_S$ is inseparable then $\kappa(S,D) \geq 0$, and the equality is attained only when $a_S: S \to A$ is purely inseparable of height one; and if moreover $(X_{K(S)}, \Delta_{K(S)})$ is klt, $a_S$ is finite and $K_X+\Delta \sim_\mathbb Q f^*D$, under the assumption that resolutions of singularities hold in $\dim S$, we have $\kappa(S,D)\ge1$.
			\end{enumerate}
		\end{thm}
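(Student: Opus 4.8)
The plan is to separate the fibration by a purely inseparable base change, apply the separable canonical bundle formula Theorem~\ref{thm:intro:sep-cb-formula}, and then use the maximal Albanese dimension hypothesis to push the resulting relation back down to $S$ with explicit constants. Since $f$ is inseparable of relative dimension one, the geometric generic fiber is non-reduced, so after a finite purely inseparable base change $\sigma_n\colon S_n\to S$ the normalized pullback $f_n\colon X_n\to S_n$ becomes separable while $(X_{n,K(S_n)},\Delta_{n,K(S_n)})$ stays lc (resp.\ klt). The point of condition (C3) is that such a base change may be taken compatibly with $a_S$: by functoriality of Frobenius the separating covers of $S$ are governed by the Frobenius isogenies $F_A^n\colon A\to A$, and it is this abelian structure that later controls the descent. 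Applying Theorem~\ref{thm:intro:sep-cb-formula} to $f_n$ produces purely inseparable covers $\tau_1,\tau_2$ and a relation of the form $\tau_2^*\tau_1^*D_n \simQ a\,K_{\bar T'}+b\,\tau_2^*K_{\bar T}+c\,\tau_2^*\tau_1^*K_{S_n}+E$ with $a,b,c\ge 0$ and $E$ effective, where $D_n$ governs $f_n$ and differs from $\sigma_n^*D$ only by the foliation discrepancy of the base change $X_n\to X$. Throughout I would compute all these $K$'s via the foliation canonical bundle formula of \cites{PW22,JW21}, which records the canonical class of a height-one quotient as a pullback of $K$ plus a $(p-1)$-multiple of a foliation class.

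For part~(1), where $a_S$ is separable, the triviality of $\Omega_A$ gives $K_S\simQ R_{a_S}\geQ 0$ (the ramification divisor), so $K_S$ is effective and $\kappa(S)\ge 0$. The work is to convert the cover-level relation into the clean statement $D-\tfrac1{2p}K_S\geQ 0$ on $S$ itself. Here I would exploit two features of the Frobenius isogenies of $A$: first, $F_A^*$ acts by multiplication by $p$ on divisor classes, so the Frobenius pullbacks scale the relevant classes by controlled powers of $p$; second, $K_A=0$, so every canonical and ramification divisor appearing upstairs is, up to these powers of $p$, accounted for by $K_S$. Combining the multiplicities $p$ from the pullbacks, the $(p-1)$ coefficients of the foliation formula, and the coefficients $a,b,c$ of Theorem~\ref{thm:intro:sep-cb-formula} through a direct bookkeeping computation should isolate the universal constant $\tfrac1{2p}$. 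Once $D-\tfrac1{2p}K_S\geQ 0$ is in hand, $\kappa(S,D)\ge\kappa(S,\tfrac1{2p}K_S)=\kappa(S)$ follows at once, as scaling by a positive rational preserves the Iitaka dimension.

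For part~(2), where $a_S$ is inseparable, one loses effectivity of $K_S$ (the inseparable part of $a_S$ renders its ramification non-effective, e.g.\ a Frobenius contributes $(1-p)K_S$), so I only aim for $\kappa(S,D)\ge 0$. Since $\sigma_n$ and the $\tau_i$ are finite, $\kappa(S,D)=\kappa(\bar T',\tau_2^*\tau_1^*\sigma_n^*D)$, and the cover-level relation shows this class is $\mathbb{Q}$-linearly equivalent to an effective divisor once the negative $K_{S_n}$-contribution is checked to be absorbed; this gives $\kappa(S,D)\ge 0$. Tracing when equality $\kappa(S,D)=0$ can hold forces every positive term to vanish, which by the foliation formula pins $a_S$ down to be purely inseparable of height one. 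Finally, under the stronger hypotheses that $(X_{K(S)},\Delta_{K(S)})$ is klt, that $a_S$ is finite and that $K_X+\Delta\simQ f^*D$ globally, the \emph{moreover} clause of Theorem~\ref{thm:intro:sep-cb-formula} supplies a strictly positive coefficient $c\ge c_0>0$ on the $K_{S_n}$-term; passing to a resolution of $S$ (whence the hypothesis that resolution holds in $\dim S$) I would realize this positive, moduli-type contribution as an honest divisor of positive Iitaka dimension, upgrading the conclusion to $\kappa(S,D)\ge 1$.

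The main obstacle I anticipate is the bookkeeping in part~(1) that yields the \emph{universal} constant $\tfrac1{2p}$: one must simultaneously control the foliation classes of all the intermediate purely inseparable covers, the powers of $p$ introduced by the Frobenius pullbacks on $A$, and the a priori unbounded coefficients from Theorem~\ref{thm:intro:sep-cb-formula}, and show they conspire to a constant independent of $f$ and $\Delta$. The second delicate point is the equality analysis and the klt upgrade in part~(2): characterizing $\kappa(S,D)=0$ by height-one inseparability, and realizing the positive $c_0$-contribution as a divisor of positive Iitaka dimension on a resolution, are where the foliation/adjunction calculus will require the most care.
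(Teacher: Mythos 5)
Your overall strategy---separabilize $f$ by a purely inseparable base change compatible with $a_S$, apply the separable formula (Theorem~\ref{thm:intro:sep-cb-formula}), and push back down---is not the paper's route, and it has a fatal gap at its linchpin: the claim that the pair ``stays lc (resp.\ klt)'' after the separabilizing base change. To preserve the hypothesis $K_{X_n}+\Delta_n\simQ f_n^*D_n$ you must absorb the foliation/conductor correction into the boundary: by Proposition~\ref{prop:compds} the comparison reads $\pi^*(K_X+\Delta)\simQ K_{X_n}+(p-1)E+\pi^*\Delta+\cdots$, and at the step where the generic fiber becomes non-normal (which is unavoidable, since $f$ is inseparable) the divisor $E$ acquires a horizontal component $T'$ of coefficient $\gamma_1\ge1$. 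The degree count in the proof of Theorem~\ref{thm:red-bach} only gives $(p-1)\gamma_1\deg_{K(S')}T'\le 2$, so the boundary coefficient $(p-1)\gamma_1$ equals $2$ when $p=3$, and can equal $2$ when $p=2$ as well: the base-changed pair is genuinely \emph{not} lc, and hypothesis (C1) of Theorem~\ref{thm:intro:sep-cb-formula} fails. This is exactly why the paper does not reuse the separable formula but instead proves Theorem~\ref{thm:can-bd-g0} (a genus-zero formula whose input is a \emph{movable} linear system rather than an lc boundary, using Proposition~\ref{prop:JiWal-refined-version}/Proposition~\ref{prop:basic-of-X-S-A}(4): the horizontal conductor of a non-reduced fiber moves) and Theorem~\ref{thm:red-bach}. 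Even setting this aside, your ``bookkeeping'' cannot produce the universal constant $\frac1{2p}$ in part (1): in the merely lc case Theorem~\ref{thm:intro:sep-cb-formula} gives only $c\ge0$, with no positive lower bound. In the paper the constant arises as $\frac12\cdot\frac1p$: the $\frac12$ from condition (b) of Theorem~\ref{thm:can-bd-g0} (which uses that horizontal components dominate $S$, hence have m.A.d.\ and $K\geQ0$), and the $\frac1p$ from $\tau_1^*K_S\simQ pK_{S_1}$ for the \emph{single} height-one base change $S_1=S^{\oneoverp}$ (Theorem~\ref{thm:base-sep-A}); no iteration to full separability is ever performed.

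Part (2) has two further gaps. First, your equality analysis (``every positive term must vanish, which pins $a_S$ down to be purely inseparable of height one'') is not an argument: in the paper the height-one conclusion comes from the case analysis of Theorem~\ref{thm:insep-A} --- if $X_{K(S_1)}$ is integral, or if the iteration takes $n\ge2$ steps, one already gets $\kappa(S,D)\ge1$ via Proposition~\ref{prop:basic-of-X-S-A}(2); only the $n=1$ non-reduced case allows $\kappa=0$, and there the resolution hypothesis enters through Proposition~\ref{prop:char-abel-var}(2) to show $S_1'\to A^{\oneoverp}$ is birational, sandwiching $a_S$ between Frobenii. Second, your klt upgrade (``realize the $c_0$-contribution as a divisor of positive Iitaka dimension on a resolution'') cannot work, because $\kappa(S_n,K_{S_n})$ can be zero --- in the critical case the relevant base \emph{is} $A^{\oneoverp}$ --- so a positive multiple of a canonical divisor carries no Iitaka dimension. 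The paper's proof is instead a contradiction argument of a structural nature: if $\kappa(S,D)=0$, then $A^{\oneoverp}\to S\to A$, the Covering Theorem~\ref{thm:covering} and semi-ampleness of effective divisors on abelian varieties force $D\simQ0$, hence $K_X+\Delta\simQ0$; then Theorem~\ref{thm:S-abelian} (whose proof occupies Section~\ref{sec:alb-morph}, including the foliation-descent Lemma~\ref{lem:fol-fibr}) shows $S$ is an abelian variety, so $a_S$ is an isomorphism, contradicting inseparability. Your outline contains no substitute for this structural input.
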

		\begin{rmk}
			When $a_S$ is inseparable and $\kappa(S,D)= 0$, we can derive additional information, see Theorem~\ref{thm:insep-A}.
		\end{rmk}
		
		Varieties with a nef anti-canonical divisor are of special interest and expected to have good structures. For example, over $\mathbb{C}$, for a projective klt pair $(X,\Delta)$, if $-(K_X + \Delta)$ is nef, then the Albanese morphism $a_X\colon X\to A$ is a fibration which has certain isotrivial structure (\cite{Amb05, Cao19, CCM21}). In characteristic $p>0$, under the condition that the geometric generic fiber has certain mild singularities, similar results hold (see \cite{Wang22, Ejiri19}). In a recent paper, under certain conditions on singularities, Ejiri and Patakfalvi (\cite{EP23}) prove that $a_X\colon X\to A$ is surjective, and if $X\buildrel f\over\to S\buildrel g\over\to A$ is the Stein factorization of $a_X$, then $g$ is purely inseparable. More precisely, \cite{EP23} shows that if $f\colon X\to S$ or $g\colon S\to A$ is separable, then $a_X\colon X\to A$ is a fibration. In this paper, by applying the canonical bundle formula established above and an additional careful analysis of certain purely inseparable base changes and the related foliations, we can prove that if $a_X\colon X\to A$ is of relative dimension one, then it is a fibration.
		
		\begin{thm}[see Theorem~\ref{thm:S-abelian}]\label{thm:intro:S-abelian}
			Let $(X, \Delta)$ be a projective normal $\mathbb Q$-factorial klt pair. Assume that $-(K_X+\Delta)$ is nef. If the Albanese morphism $a_X\colon X \to A$ is of relative dimension one over the image $a_X(X)$.
			Then $a_X\colon X\to A$ is a fibration.
		\end{thm}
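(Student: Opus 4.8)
The plan is to run the canonical bundle formula of Theorem~\ref{thm:main-thm-cbf} along the Albanese map and to combine it with the nefness of $-(K_X+\Delta)$ to force the purely inseparable part of $a_X$ to be trivial. Take the Stein factorisation $a_X\colon X\xrightarrow{f} S\xrightarrow{g} A$. By \cite{EP23} the map $a_X$ is surjective and $g$ is finite purely inseparable, and moreover $a_X$ is a fibration as soon as $f$ or $g$ is separable; so it suffices to treat the case in which both $f$ and $g$ are inseparable and to show that then $\deg g=1$. First I would pin down the fibres: restricting $-(K_X+\Delta)$ to a general fibre $F$ of $f$ (a regular curve) gives $\deg(K_F+\Delta|_F)\le 0$, while the inseparability of $f$ forbids $F\cong\mathbb P^1$; since $(F,\Delta|_F)$ is klt this forces $p_a(F)=1$, $\Delta|_F=0$ and $(K_X+\Delta)\cdot F=0$, placing us in the genuinely genus-one situation. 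Thus $K_X+\Delta$ is numerically trivial over $S$, and over a big open subset $S^{\circ}$ it descends to $(K_X+\Delta)|_{f^{-1}(S^{\circ})}\simQ f^*D^{\circ}$; setting $D=\overline{D^{\circ}}$, the nefness of $-(K_X+\Delta)$ yields that $-D$ is nef on $S$. Conditions (C1)--(C3) of Theorem~\ref{thm:main-thm-cbf} are then in place, $S$ being of maximal Albanese dimension because $g$ is finite onto the abelian variety $A$.

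Next I would identify the Albanese of $S$. Using that $A=\Alb(X)$ and that $f$ is dominant, the universal property produces mutually inverse homomorphisms between $\Alb(S)$ and $A$ under which $a_S$ is identified with $g$; in particular $a_S$ is purely inseparable precisely because $g$ is, and $\deg a_S=\deg g$. Hence we are exactly in case (2) of Theorem~\ref{thm:main-thm-cbf}: $\kappa(S,D)\ge 0$, with equality forcing $a_S$ to be purely inseparable of height one. Since $-D$ is nef and, by that formula, $D$ is $\mathbb Q$-linearly equivalent to an effective divisor $E$, intersecting $-E$ with a power of an ample class shows $E\cdot H^{\dim S-1}=0$, whence $E=0$, so $D\simQ 0$ and $\kappa(S,D)=0$; therefore $g$ is purely inseparable of height one, i.e.\ $\deg g=p$. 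It remains to exclude this last possibility.

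The remaining, and hardest, step is to rule out a nontrivial height-one cover $g$. Here $\mathcal F:=\ker(da_S)\subset T_S$ is a saturated rank-one foliation with $S/\mathcal F\cong A$, and the foliation canonical bundle formula gives $K_S=(p-1)c_1(\mathcal F)$; the boundary case $\kappa(S,D)=0$, together with the refined structure of Theorem~\ref{thm:insep-A}, should force $c_1(\mathcal F)\equiv 0$, so that $\mathcal F$ is a numerically trivial sub-line-bundle of $T_S$, essentially a global $p$-closed vector field defining $g$. The real difficulty is that $K_S\equiv 0$ alone does not make $g$ trivial, since such vector fields do produce honest degree-$p$ covers; the numerical output of the canonical bundle formula must be upgraded to the rigidity of the Albanese. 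To finish I would pull $\mathcal F$ back along the inseparable fibration $f$, using the foliation techniques of \cites{PW22,JW21} to compare it with the vertical foliation of $f$ and to control $K_{X/S}$, and then exploit $-(K_X+\Delta)$ nef together with the klt hypothesis to show that the induced foliation on $X$, equivalently the purely inseparable structure of $a_X$, must descend, contradicting the maximality of $A=\Alb(X)$. I expect this foliation-descent analysis to be the main obstacle, everything else being a packaging of Theorem~\ref{thm:main-thm-cbf} and \cite{EP23}.
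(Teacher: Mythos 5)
Your reduction (via \cite{EP23} and the identification $\Alb(S)\cong A$) to the case where both $f$ and $g$ are inseparable is fine, but the very next step contains a genuine gap. You claim that inseparability of $f$ plus kltness forces $p_a(F)=1$, $\Delta|_F=0$ and $(K_X+\Delta)\cdot F=0$. This is false in characteristic $2$: by Proposition~\ref{prop:ga0}(3), the generic fibre of an inseparable fibration of relative dimension one can be the regular, geometrically non-reduced conic $sx^2+ty^2+z^2=0$ over $K(S)$, which is not isomorphic to $\mathbb P^1_{K(S)}$ and yet has arithmetic genus \emph{zero} and $\deg K=-2$; since all its closed points have degree $\ge 2$, it supports klt boundaries (e.g.\ $\tfrac12 P$ with $\deg_{K(S)}P=2$), and then $-(K_X+\Delta)$ is \emph{ample} on the generic fibre. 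Your entire proposal is conditioned on relative numerical triviality of $K_X+\Delta$, so this relatively ample case is simply never treated, and nothing in your argument excludes it. The paper handles it in Case~(1) of Lemma~\ref{lem:S-abelian-partI}: one writes $K_X+\Delta+\Delta_\epsilon\sim_{\mathbb Q}\epsilon f^*H$ with $\Delta_\epsilon$ of small coefficients (Lemma~\ref{lem:nefbigdiv}), runs the canonical bundle formulas (Theorems~\ref{thm:sep-cb-formula}, \ref{thm:base-sep-A}, \ref{thm:insep-A}), and lets $\epsilon\to 0$; crucially this limit yields the \emph{numerical} statement $K_S\equiv 0$ (or $K_{S_1'}\equiv 0$), which is what Proposition~\ref{prop:char-abel-var}(3) needs, rather than mere Kodaira-dimension information.

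The second gap is worse: the step you yourself flag as the main obstacle---ruling out a height-one inseparable $g$---is not only missing but is aimed at an intermediate statement that is provably false. You propose to force $c_1(\mathcal F)\equiv 0$, i.e.\ $K_S\equiv 0$. But in exactly the configuration to be excluded, $a_S$ is inseparable, so Proposition~\ref{prop:char-abel-var}(1) gives $\kappa(S)\ge 1$; hence $K_S$ is never numerically trivial there and no argument can make it so. Conversely, the ``real difficulty'' you describe---that $K_S\equiv 0$ would not suffice because abelian varieties admit degree-$p$ inseparable covers---is not a difficulty at all: $K_S\equiv 0$ together with maximal Albanese dimension implies $S$ is an abelian variety by Proposition~\ref{prop:char-abel-var}(3), and then, since $A=\Alb(X)$, the universal property of the Albanese gives a section of $g$ (the factorization $f=h\circ a_X$ with $g\circ h=\mathrm{id}_A$), forcing $g$ to be an isomorphism. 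So your plan both targets an impossible statement and misjudges what would follow from it. What is actually needed in the residual case (both maps inseparable, $X_{K(S_1)}$ non-reduced, $S_1=A^{\oneoverp}$) is a different mechanism, and it is where all the substance of the paper's proof lies: Proposition~\ref{prop:str-num0} applied to $X_1\to S_1=A^{\oneoverp}$ produces a \emph{second} fibration $X_1\to\mathbb P^1$ whose general fibres are abelian varieties on which $\det\mathcal F_{X_1/X}$ is $\mathbb Q$-linearly trivial, and Lemma~\ref{lem:fol-fibr} then shows the push-down of this foliation to $A^{\oneoverp}$ is generated by global (translation-invariant) vector fields, so that $S=A^{\oneoverp}/\mathcal G$ is abelian---contradicting $\kappa(S)\ge 1$. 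Nothing in your foliation-descent sketch anticipates this structural input (the auxiliary fibration by abelian varieties), and without it, or a genuine substitute, the proof does not close.
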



		\smallskip
		This is an important preparation for further studies of varieties with nef anti-canonical divisors. In a later paper \cite{CWZ26}, we focus on the case $K_X\equiv0$ and show that $X$ can be obtained by a sequence of quotients of group actions and foliations beginning with a product of an abelian variety and an elliptic (or a rational) curve, and we give specific descriptions of those quotients in some special cases.
		
		\subsection*{Conventions}
		\begin{itemize}[itemsep=4pt]
			\item
			For a scheme $Z$, we use $Z_{\mathrm{red}}$ to denote the scheme with the reduced structure of $Z$.
			\item By a \emph{variety} over a field $k$, we mean an integral quasi-projective scheme over $k$.
			For a variety $X$, we use $K(X)$ to denote the function field of $X$, and for a morphism $f\colon X \to S$  of varieties, we use $X_{K(S)}$ to denote the generic fiber of $f$.
			\item By a \emph{fibration}, we mean a projective morphism $f\colon X \to S$ of normal varieties such that $f_*\mathcal O_X=\mathcal O_S$, which implies that $K(S)$ is algebraically closed in $K(X)$.
			\item
			A morphism $f\colon X\to Y$ of varieties is said to be {\it separable} (resp.\ {\it inseparable}) if the field extension $K(X)/K(f(X))$ is {\it separable} (resp.\ {\it inseparable}).
			\item
			Let $f\colon X\to S$ be a fibration. A divisor $D$ on $X$ is said to be {\em $f$-exceptional} (resp.\ {\em vertical, horizontal}\/) if $f(\Supp D)$ is of codimension $\ge2$ in $S$ (resp.\ of codimension $\ge1$ in $S$, dominant over $S$).
			\item
			Let $k$ be a field of characteristic $p>0$ and $X$ be a variety over $k$.  We denote by $F_X\colon X^{\oneoverp} \to X$ the absolute Frobenius morphism.
			\item
			Let \( f\colon X \to Y \) be a morphism. The pullback \( f^*D \) is well-defined under one of the following conditions:
			\begin{itemize}
				\item[(1)] \( D \) is a \( \mathbb{Q} \)-Cartier divisor on \( Y \), or
				\item[(2)] both \( X \) and \( Y \) are normal, \( D \) is a \( \mathbb{Q} \)-divisor, and \( f \) is equidimensional.
			\end{itemize}
			\item
			For a morphism $\sigma\colon Z \to X$ of varieties, if $D$ is a divisor on $X$ such that the pullback $\sigma^*D$ is well defined, we often use $D|_Z$ to denote $\sigma^*D$ for simplicity.
			\item
			Let $X$ be a normal variety and denote by $i\colon X^\circ\hookrightarrow X$ the inclusion of the regular locus of $X$.
			For a Weil divisor $D$ on $X$, $\mathcal{O}_X(D)$ is a subsheaf of the constant sheaf $K(X)$ of rational functions, with the stalk at a point $x$ being defined by \[
			\mathcal{O}_X(D)_x:=\left\{
			f \in K(X)\biggm|
			\vcenter{\hbox{%
					$(\mathrm{div}(f) + D)|_U \geq 0$ for some}\hbox{open set $U$ containing $x$}
			}
			\right\}.
			\]
			We may identify $\mathcal{O}_X(D)=i_*\mathcal{O}_{X^\circ}(D|_{X^\circ})$.
			\item
			We denote by $\sim$, $\sim_{\mathbb{Q}}$ and $\equiv$ the linear, $\mathbb{Q}$-linear and numerical equivalence of divisors, respectively.
			\item
			Let $A = \sum a_i C_i$ and $B = \sum b_i C_i$ be effective divisors. We define $A \land B := \sum \min (a_i, b_i) C_i$.
			\item
			For two $\mathbb{Q}$-divisors $D,D'$ on a normal variety $X$, by $D \geq D'$ we mean that $D-D'$ is an effective divisor;
			and by $D\succeq D'$ (resp.\ $D\succeq_{\mathbb{Q}} D'$) we mean that $D-D'$ is linearly (resp.\ $\mathbb{Q}$-linearly) equivalent to an effective divisor.
			\item
			Let $X$ be a normal variety.
			If a coherent sheaf $\mathcal F$ on $X$ is reflexive (of rank $r$), then we denote $\det \mathcal F := (\bigwedge^r \mathcal F)^{\vee\vee}$.
			When $\mathcal{F}$ is just locally free in codimension one (e.g., torsion free),
			we define $\det \mathcal{F} := i_*\det(\mathcal{F}|_U)$, where $i\colon U\hookrightarrow X$ is an inclusion of a {\it big open subset} (meaning its complement in $X$ has codimension at least $2$) on which $\mathcal F$ is locally free.
			\item
			Let $X$ be a normal projective variety.
			The Kodaira dimension $\kappa(X)$ of $X$ is defined to be the Iitaka dimension $\kappa(X,K_X)$.
			More precisely, it is the dimension of the image of $\Phi\colon X\dashrightarrow \mathbb P H^0(X, nK_{X})$ for $n$ big and divisible enough. 
			\item 
			Let $X$ be a normal projective variety and let $D$ be an $\mathbb R$-Cartier $\mathbb R$-divisor on $X$.
			Let $H$ be an ample Cartier divisor on $X$.
			If $D$ is nef, then the {\it numerical dimension} of $D$ is the largest integral number $j\ge0$ such that $(D^j\cdot H^{n-j}) \ne 0$, see \cite[Remark~4.6]{CHMS14}.
			\item 
			Let $X$ be a normal projective variety over an algebraically closed field $k$.
			We say that $X$ is of maximal Albanese dimension, abbreviated as m.A.d., if the Albanese morphism $a_X\colon X\to A_X$ of $X$ is generically finite, or equivalently if $\dim a_X(X) = \dim X$.
		\end{itemize}
		\section{Preliminaries}\label{sec:pre}
		In this section, we collect some basic results about divisors and linear systems that will be used in the sequel. We work over an algebraically closed field $k$.
		
		\begin{lem}[{\cite[Lemma~4.2]{Zha19}}]\label{lem:kod-gg}
			Let $\mathcal E$ be a coherent sheaf which is locally free in codimension one on a normal projective variety $X$.
			Assume that $\mathcal E^{\vee\vee}$ is generically globally generated and $h^0(X, \mathcal E^{\vee\vee}) > \rank \mathcal E$. Then $h^0(X, \det\mathcal E) >1$.
		\end{lem}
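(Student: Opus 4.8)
The plan is to pass to the reflexive hull $\mathcal F:=\mathcal E^{\vee\vee}$ and to manufacture two independent sections of $\det\mathcal E$ out of the global sections of $\mathcal F$ by taking wedge products. First I would note that $\det\mathcal E=\det\mathcal F$: by the stated convention both are $i_*\det(\,\cdot\,|_U)$ for a big open $U\subseteq X$ on which $\mathcal E$ is locally free, and there $\mathcal E|_U=\mathcal F|_U$; as both are reflexive of rank one, they coincide. Put $r=\rank\mathcal E$ and $N=h^0(X,\mathcal F)>r$, and fix a $k$-basis $s_1,\dots,s_N$ of $H^0(X,\mathcal F)$. Any wedge of $r$ of these sections lands in $H^0(\bigwedge^r\mathcal F)$, and composing with $\bigwedge^r\mathcal F\to(\bigwedge^r\mathcal F)^{\vee\vee}=\det\mathcal F=\det\mathcal E$ produces an element of $H^0(X,\det\mathcal E)$; so it suffices to exhibit two linearly independent such wedges. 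Since $\mathcal F$ is generically globally generated, the $s_i$ span the $r$-dimensional generic fibre $\mathcal F\otimes K$ over $K:=K(X)$, so after reindexing I may assume $s_1,\dots,s_r$ form a $K$-basis of $\mathcal F\otimes K$; in particular $\omega_0:=s_1\wedge\dots\wedge s_r$ is a nonzero section of $\det\mathcal E$.

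Suppose for contradiction that $h^0(X,\det\mathcal E)=1$, with generator $\omega$, so that every global section of $\det\mathcal E$ --- in particular every wedge $s_{i_1}\wedge\dots\wedge s_{i_r}$ --- is a $k$-multiple of $\omega$. Writing $\omega_0=c\,\omega$ we have $c\ne0$. Because $N>r$ there is a further section $s_{r+1}$, and over $K$ we may expand $s_{r+1}=\sum_{i=1}^r g_i s_i$ with $g_i\in K$. For each $i$ consider the wedge $\tau_i$ obtained from $\omega_0$ by replacing its $i$-th factor $s_i$ with $s_{r+1}$, i.e.\ $\tau_i:=s_1\wedge\dots\wedge s_{i-1}\wedge s_{r+1}\wedge s_{i+1}\wedge\dots\wedge s_r$. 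Substituting the expansion of $s_{r+1}$ and using that a wedge with a repeated factor vanishes, all terms drop out except the one indexed by $j=i$, giving the identity $\tau_i=g_i\,\omega_0$ of sections over the generic point.

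Now $\tau_i$ and $\omega_0$ are genuine global sections of $\det\mathcal E$, hence equal to $k$-multiples of $\omega$; evaluating the identity $\tau_i=g_i\omega_0$ in the one-dimensional $K$-space $(\det\mathcal E)\otimes K$, in which $\omega$ is a nonzero vector, forces $g_i\in k$ for every $i$. Therefore $s_{r+1}-\sum_{i=1}^r g_i s_i$ is a global section of $\mathcal F$ that vanishes at the generic point; since $\mathcal F$ is reflexive, hence torsion free, it vanishes identically, yielding the nontrivial linear relation $s_{r+1}=\sum_{i=1}^r g_i s_i$ among $s_1,\dots,s_{r+1}$. This contradicts the linear independence of the chosen basis, so $h^0(X,\det\mathcal E)\ge2>1$, as required.

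The one step that needs care is the transition from the hypothesis $h^0(\det\mathcal E)=1$ to the scalar conclusion $g_i\in k$: one must be sure that the coefficients $g_i$, which a priori are rational functions arising from the $K$-linear expansion of $s_{r+1}$, are pinned down by comparing the two bona fide global sections $\tau_i$ and $\omega_0$ inside the rank-one reflexive sheaf $\det\mathcal E$. Everything else --- the identification $\det\mathcal E=\det\mathcal F$, the fact that wedges of sections give sections of $\det\mathcal E$, and the vanishing of a section supported away from a dense open via torsion-freeness --- is routine bookkeeping with reflexive sheaves.
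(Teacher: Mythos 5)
Your proof is correct. Note that the paper does not supply its own proof of Lemma~\ref{lem:kod-gg} but quotes it from \cite{Zha19}*{Lemma~4.2}, and your argument—wedge $r$ generically generating sections to get a nonzero $\omega_0\in H^0(X,\det\mathcal E)$, replace one factor by an extra section $s_{r+1}$, use the assumption $h^0(X,\det\mathcal E)=1$ to force the generic expansion coefficients $g_i$ to lie in $k$, and then contradict the $k$-linear independence of $s_1,\dots,s_{r+1}$ via torsion-freeness—is exactly the standard argument of the cited proof.
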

		
		\begin{lem}\label{lem:push-down}
			Let $\sigma\colon Y \to X$ be a proper dominant morphism of normal varieties, generically finite of degree $d$.
			Let $D, D'$ be $\mathbb{Q}$-Cartier $\mathbb{Q}$-divisors on $X, Y$ respectively.
			Assume that there exists a divisor $N$ on $Y$ exceptional over $X$ such that $\sigma^*D \sim_{\mathbb Q} D' + N$.
			Then $\sigma_*D' \sim_{\mathbb Q} dD$.
		\end{lem}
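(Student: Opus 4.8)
The plan is to apply the pushforward $\sigma_*$ to the hypothesis $\sigma^* D \sim_{\mathbb Q} D' + N$ and then identify each of the three resulting terms. The whole argument rests on two standard facts about the pushforward of Weil divisors under a generically finite proper dominant morphism of normal varieties, which I would recall (or cite from a standard reference such as Fulton's \emph{Intersection Theory} or Koll\'ar--Mori) before assembling them: that $\sigma_*$ respects $\mathbb Q$-linear equivalence, and the projection formula $\sigma_*\sigma^* D = d\,D$.

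First I would observe that $\sigma_*$ carries principal divisors to principal divisors. This follows from the identity $\sigma_*\operatorname{div}(g)=\operatorname{div}\!\big(\operatorname{Nm}_{K(Y)/K(X)}(g)\big)$ for every $g\in K(Y)^\times$, where $\operatorname{Nm}$ is the field norm of the finite extension $K(Y)/K(X)$ (finite since $\sigma$ is dominant and generically finite of degree $d$). Hence $\sigma_*$ preserves $\sim_{\mathbb Q}$, and applying it to $\sigma^* D-(D'+N)\sim_{\mathbb Q}0$ yields $\sigma_*\sigma^* D\sim_{\mathbb Q}\sigma_* D'+\sigma_* N$. The term $\sigma_* N$ vanishes: since $N$ is $\sigma$-exceptional, $\sigma(\Supp N)$ has codimension $\ge 2$ in $X$, so $\sigma_* N=0$ directly from the definition of the pushforward of prime divisors.

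The one point demanding care, and the only genuinely technical ingredient, is the projection formula $\sigma_*\sigma^* D=d\,D$ for $\mathbb Q$-Cartier $D$. I would verify it by working at the generic point $\eta$ of an arbitrary prime divisor $P$ on $X$, where $\mathcal O_{X,\eta}$ is a DVR by normality of $X$. Choosing $m$ with $mD$ Cartier and writing $mD=\operatorname{div}(f)$ near $\eta$, the only prime divisors of $Y$ that both appear in $\sigma^*(mD)$ and survive pushforward are those $Q_j$ dominating $P$, with multiplicity $\operatorname{ord}_{Q_j}(\sigma^* f)=e_j\operatorname{ord}_P(f)$, where $e_j$ is the ramification index; those $Q$ with $\sigma(Q)$ of codimension $\ge2$ contribute nothing. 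Thus the coefficient of $P$ in $\sigma_*\sigma^*(mD)$ is $\operatorname{ord}_P(f)\sum_j e_j f_j=m\,\mult_P(D)\sum_j e_j f_j$, with $f_j=[K(Q_j):K(P)]$. The fundamental identity $\sum_j e_j f_j=[K(Y):K(X)]=d$, valid because the integral closure of $\mathcal O_{X,\eta}$ in $K(Y)$ is a finite module (again by normality), then gives $\sigma_*\sigma^*(mD)=md\,D$, whence $\sigma_*\sigma^* D=d\,D$.

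Combining these computations, the pushed-forward equivalence reads $d\,D\sim_{\mathbb Q}\sigma_* D'$, which is exactly the assertion $\sigma_* D'\sim_{\mathbb Q}d\,D$. I expect no further obstacle: the vanishing $\sigma_* N=0$ and the invariance of $\sim_{\mathbb Q}$ are formal, and the entire weight of the proof lies in the local fundamental-identity computation establishing $\sigma_*\sigma^* D=d\,D$.
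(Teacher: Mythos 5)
Your proof is correct and follows essentially the same route as the paper: the paper's entire proof is the citation ``See Fulton, \emph{Intersection Theory}, Theorem~1.4'', which is precisely your first ingredient (proper pushforward preserves $\mathbb{Q}$-linear equivalence via the norm identity $\sigma_*\operatorname{div}(g)=\operatorname{div}(\operatorname{Nm}_{K(Y)/K(X)}(g))$). The remaining steps you supply --- the vanishing $\sigma_*N=0$ for $\sigma$-exceptional $N$ and the codimension-one computation $\sigma_*\sigma^*D=dD$ via the identity $\sum_j e_jf_j=d$ (which, note, also uses properness to guarantee that every divisorial valuation over $P$ is centered at a prime divisor of $Y$) --- are exactly the routine assembly the paper leaves to the reader.
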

		\begin{proof}
			See \cite[Theorem~1.4]{Fulton98}.
		\end{proof}
		
		\begin{coveringThm}[{\cite[Theorem~10.5]{Iit82}}]\label{thm:covering}
			Let $f\colon Y \to X$ be a proper surjective morphism between normal complete varieties.
			If $D$ is a Cartier divisor on $X$ and $E$ an effective $f$-exceptional divisor on $Y$,
			then
			$$\kappa(Y,f^*D + E) = \kappa(X,D).$$
		\end{coveringThm}
		
		\begin{rmk*}
			If furthermore, $f$ is equidimensional, so that pulling back Weil $\mathbb Q$-divisors makes sense,
			then the equality $\kappa(Y,f^*D+E) = \kappa(X,D)$ still holds for Weil $\mathbb Q$-divisors $D$.
			Indeed, by the proof of \cite[Theorem~10.5]{Iit82}, we have $\kappa(f^{-1}(X^{\rm reg}),f^*(D|_{X^{\rm reg}})+E) = \kappa(X^{\rm reg},D|_{X^{\rm reg}})$,
			where $X^{\rm reg}$ denotes the regular locus of $X$.
			Here, $\kappa(X^{\rm reg},D|_{X^{\rm reg}})$ etc., are well defined since $H^0(X,mD)\cong H^0(X^{\rm reg},mD|_{X^{\rm reg}})$ for any positive integer $m$.
		\end{rmk*}

		\begin{lem}\label{lem:conn}
			If a linear system $\mathfrak M$ (without fixed components) on a normal proper variety $X$ has a reduced and connected
			member $M_0$, then every $M\in\mathfrak M$ is connected.
		\end{lem}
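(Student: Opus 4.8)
The plan is to pass to the universal family of $\mathfrak M$ and to read connectedness of the members off the Stein factorization of this family, feeding in the reduced connected member $M_0$ through upper semicontinuity of the fibrewise $h^0$ of the structure sheaf.

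Write $\mathfrak M=\mathbb P(V)$ with $V\subseteq H^0(X,\mathcal O_X(D))$ and set $\mathbb P^N=\mathbb P(V)$. Form the incidence scheme $\mathcal D=\{(x,s)\in X\times\mathbb P^N : x\in\Supp M_s\}$ with its projections $p\colon\mathcal D\to X$ and $q\colon\mathcal D\to\mathbb P^N$, so that the fibre $q^{-1}(s)$ is the member $M_s$. The crucial elementary point is that $p$ restricts to an isomorphism $q^{-1}(s)\xrightarrow{\sim}\Supp M_s$ for every $s$; thus $M_s$ is connected if and only if $q^{-1}(s)$ is connected, and this correctly counts components of $M_s$ that meet only along $\operatorname{Bs}\mathfrak M$, which a resolution of $\Phi_{\mathfrak M}$ would instead separate. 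Taking the Stein factorization $q=\pi\circ q'$, with $q'$ having geometrically connected fibres and $\pi\colon T\to\mathbb P^N$ finite (and $T$ integral, since $\mathcal D$ is), the number of connected components of $M_s$ equals $\#\pi^{-1}(s)$. Hence all members are connected exactly when $\pi$ is radicial; and since a finite dominant morphism onto the normal variety $\mathbb P^N$ is radicial as soon as it is generically injective, it suffices to prove that the general member of $\mathfrak M$ is connected.

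To prove the latter I would reduce to a pencil. For a general member $M$ the pencil $P=\langle M_0,M\rangle$ has no fixed component, because $\mathfrak M$ has none and so a general $M$ contains no given prime divisor of $M_0$. Over the smooth curve $P\cong\mathbb P^1$ the resulting family of divisors $q_P\colon\mathcal D_P\to\mathbb P^1$ is flat (its total space is integral and dominates $\mathbb P^1$), so $s\mapsto h^0(\mathcal O_{q_P^{-1}(s)})$ is upper semicontinuous. As $M_0$ is reduced, connected and proper over the algebraically closed field $k$, we have $H^0(\mathcal O_{M_0})=k$, i.e. $h^0(\mathcal O_{M_0})=1$. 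Comparing with the generic value $h^0(\mathcal O_{M_\eta})=\deg\pi_P$, where $\pi_P$ is the finite part of the Stein factorization of $q_P$, semicontinuity gives $\deg\pi_P\le 1$, hence $\deg\pi_P=1$. Therefore $q_{P*}\mathcal O_{\mathcal D_P}=\mathcal O_{\mathbb P^1}$ and every member of $P$ is connected; in particular $M$ is. Since this holds for general $M$, the general member of $\mathfrak M$ is connected, which by the previous paragraph completes the proof.

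The step I expect to be the genuine obstacle is making the scheme structures precise when $D$ is only a Weil divisor. Then $\mathcal O_X(D)$ is merely reflexive, the incidence $\mathcal D$ need not be flat over $\mathbb P^N$ — which is exactly why I pass to a pencil over the smooth curve $\mathbb P^1$, where flatness of $q_P$ is automatic — and one must verify that the scheme-theoretic fibre $q_P^{-1}(s_0)$ really is the reduced divisor $M_0$, so that indeed $h^0(\mathcal O_{q_P^{-1}(s_0)})=1$. It is precisely here, and not before, that reducedness of $M_0$ is used: connectedness alone only yields $h^0\ge1$, and a non-reduced connected member would give $h^0>1$ without forcing $\deg\pi_P=1$ — indeed an inseparable fibration shows that a linear system may have all members connected while its general member is non-reduced. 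I would settle the reflexive bookkeeping by first working on the big open subset where $X$ is regular and $D$ is Cartier and then propagating connectedness across the codimension-two complement, and by identifying $h^0(\mathcal O_{M_\eta})$ with $\deg\pi_P$ via the Stein factorization as above.
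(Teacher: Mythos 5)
Your construction is, in substance, the paper's: for a pencil without fixed components the incidence scheme $\mathcal D_P$ (with its reduced structure) is exactly the closure $\Gamma\subset X\times\mathbb P^1$ of the graph of the induced rational map that the paper uses, and your ``crucial elementary point'' that fibres of $q_P$ embed into $X$, hence are homeomorphic to the supports $\Supp M_s$, is also the paper's key observation. Your reduction step over $\mathbb P^N$ (general member connected $\Rightarrow$ all members connected, via radiciality of the finite part of the Stein factorization) is correct, and is in fact a point of care the paper's write-up skips: the paper runs the pencil $\langle M_0,M\rangle$ for an \emph{arbitrary} $M$, which is delicate when $M$ shares components with $M_0$, whereas you only ever form pencils between members with no common component. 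The genuine gap is the step you yourself single out: the claim $h^0(\mathcal O_{q_P^{-1}(s_0)})=1$. Over the big open subset $U$ where $X$ is regular and $D$ is Cartier this can be checked (there $\mathcal D_P$ coincides with the hypersurface $\{sf_0+tf_M=0\}$, whose reducedness follows from that of $M_0$ because a multiple horizontal component would force a multiple component in every fibre, and its fibre over $s_0$ is the Cartier divisor $M_0\cap U$). But nothing controls the scheme structure of the fibre of the closure $\mathcal D_P$ over the codimension-$\ge2$ locus $X\setminus U$, which contains $\operatorname{Bs}$ of the pencil: a flat family with integral total space can acquire embedded components in a special fibre (the flat limit of twisted cubics is the classical example), and an embedded point contributes skyscraper nilpotents whose global sections force $h^0(\mathcal O_{q_P^{-1}(s_0)})>1$, after which semicontinuity yields nothing. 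Your proposed repair does not close this: restricting to $U$ destroys properness, without which neither semicontinuity nor Stein factorization is available, and ``propagating connectedness across codimension two'' propagates the topological conclusion but not the cohomological hypothesis; passing instead to the normalization or $S_2$-ification of $\mathcal D_P$ to kill embedded points is also forbidden, because such finite modifications can disconnect exactly the fibres at stake.

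The repair is to drop cohomology and argue as the paper does, using only data insensitive to embedded or nilpotent structure in codimension $\ge2$: (i) $q_P^{-1}(s_0)$ is connected --- this is your own embedding observation, since its underlying set is $\Supp M_0$; and (ii) the divisor $q_P^*(s_0)$ on $\mathcal D_P$ has some component of coefficient one, because $M_0$ is reduced (checked over $U$). If $\deg\pi_P\ge2$, then by (i) the set $\pi_P^{-1}(s_0)$ is a single point, necessarily totally ramified since the residue fields are trivial over an algebraically closed field, so $q_P^*(s_0)$ would be $\deg\pi_P$ times an integral divisor, contradicting (ii). Hence $\deg\pi_P=1$, all fibres of $q_P$ are connected, and every member of the pencil is connected. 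With this one substitution --- the paper's ``connected and non-multiple'' criterion in place of your flatness-plus-semicontinuity criterion --- your two-step argument becomes a complete proof, and your Step over $\mathbb P^N$ remains a worthwhile supplement to the paper's own treatment.
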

		\begin{proof}
			To prove the connectedness of $M$, we consider the pencil $\Phi\colon X\dashrightarrow\mathbb P^1$ induced by $M$ and $M_0$.
			Let $\Gamma\subset X\times\mathbb P^1$ be the closure of the graph of $\Phi$ with projection $\widetilde\Phi\colon \Gamma \to \mathbb{P}^1$.
			Denote by $\widetilde{M_0}\subset\Gamma$ the strict transform of $M_0$.
			Then $\widetilde\Phi$ has a fiber $\widetilde{M_0}+E$ with $E$ exceptional over $X$.
			Since the fiber $\widetilde{M_0} + E$ is connected and non-multiple, by Stein factorization, each fiber of $\widetilde\Phi$ is connected.
			Therefore, $M$ is connected.
		\end{proof}
		
		While the subsequent two lemmas are standard results in the literature, we provide detailed proofs here for the reader's convenience and to make our exposition self-contained.
		\begin{lem}\label{lem:nefbigdiv}
			Let $X$ be a normal projective variety.
			If $D$ is a nef and big Cartier divisor on $X$, then for any $\epsilon>0$, there exists an effective $\mathbb{Q}$-Cartier $\mathbb{Q}$-divisor $D_\epsilon$ with coefficients $<\epsilon$ such that $D_\epsilon\sim_{\mathbb{Q}} D$.
		\end{lem}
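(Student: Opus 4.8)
The plan is to combine Kodaira's lemma with the defining feature of nefness: a nef class can be turned into an ample class by adding an arbitrarily small ample correction, and this lets us absorb any fixed effective part into a vanishingly small multiple.

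First I would invoke Kodaira's lemma for the big divisor $D$: for a fixed ample divisor and $m\gg0$ one has $mD\sim A+E$ with $A$ ample and $E\geq0$ effective, so after dividing we may write $D\sim_{\mathbb Q}A_0+E_0$ with $A_0$ an ample $\mathbb Q$-Cartier $\mathbb Q$-divisor and $E_0\geq0$ an effective $\mathbb Q$-divisor. Note that $E_0\sim_{\mathbb Q}D-A_0$ is $\mathbb Q$-Cartier, and that $E_0$ is fixed once and for all, with finitely many components, so that $c:=\max_\Gamma\operatorname{mult}_\Gamma E_0<\infty$.

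Next, for a rational number $t\in(0,1)$ I would rewrite
\[ D\sim_{\mathbb Q}(1-t)D+t(A_0+E_0)=A_t+tE_0,\qquad A_t:=(1-t)D+tA_0. \]
Here is where nefness enters: since $(1-t)D$ is nef and $tA_0$ is ample, the class $A_t$ is ample. Choosing $k$ so that $kA_t$ is very ample and taking a general reduced member $H_t\in|kA_t|$ via Bertini, we get $A_t\sim_{\mathbb Q}\frac1kH_t$ with all coefficients $\leq\frac1k$. Setting $D_\epsilon:=\frac1kH_t+tE_0$, which is effective, $\mathbb Q$-Cartier and satisfies $D_\epsilon\sim_{\mathbb Q}D$, the coefficient of any prime divisor is at most $\frac1k+tc$. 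Given $\epsilon>0$, I first pick $t$ with $tc<\epsilon/2$ and then $k$ with $1/k<\epsilon/2$, so that every coefficient of $D_\epsilon$ is $<\epsilon$.

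The conceptual heart of the argument, and the reason nefness cannot be dropped, is the step asserting that $A_t=(1-t)D+tA_0$ is ample: it is precisely the nefness of $D$ that allows the whole class to be displaced into an ample divisor plus an $O(t)$ multiple of the \emph{fixed} effective divisor $E_0$. For a merely big $D$ the Zariski--Nakayama negative part would force a positive lower bound on some coefficient, and the statement would be false; so this is exactly where the hypothesis is spent. The only genuinely technical point is the appeal to Bertini to produce a reduced member $H_t$ in characteristic $p$; for normal $X$ this causes no trouble (a general member of a very ample linear system is normal of dimension $\geq2$ by Seidenberg, and a reduced set of points in dimension one), so the bound $\leq 1/k$ on its coefficients is guaranteed.
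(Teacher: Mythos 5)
Your proof is correct and follows essentially the same route as the paper: Kodaira's lemma writes $D \sim_{\mathbb{Q}} A_0 + E_0$, nefness of $D$ makes $A_t = (1-t)D + tA_0$ ample, and the fixed effective part $E_0$ is absorbed with coefficient $t \to 0$; indeed, with $t = 1/k$ your decomposition $D \sim_{\mathbb{Q}} A_t + tE_0$ is literally the paper's $D = A_k + \frac{1}{k}N$. The only difference is that you spell out (via Bertini/Seidenberg) the final step of representing the ample part by a small-coefficient effective divisor, which the paper leaves as ``it is easy to find the desired divisor $D_\epsilon$.''
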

		\begin{proof}
			Since $D$ is nef and big, there exists an effective $\mathbb{Q}$-Cartier divisor $N$ such that $D-N$ is ample.
			Therefore, the $\mathbb Q$-divisor $A_k:=D - \frac{1}{k}N=\frac{k-1}{k}D+\frac{1}{k}(D-N)$ is ample for $k\geq 1$.
			As $D=A_k+ \frac{1}{k}N$, it is easy to find the desired divisor $D_\epsilon$.
		\end{proof}

		\begin{lem}\label{lem:relativetrivial}%
			Let $f\colon X\to S$ be a fibration of normal projective varieties and $L$  a nef $\mathbb Q$-Cartier $\mathbb{Q}$-divisor on $X$.
			Assume that $L|_{X_{K(S)}}\simQ  0$, where $X_{K(S)}$ is the generic fiber.
			Then there exists a big open subset $S^{\circ} \subset S^{\rm reg}$ and a pseudo-effective divisor $D$ on $S$ such that
			$L|_{f^{-1}(S^{\circ})}\simQ  f^*D|_{S^{\circ}}$.
		\end{lem}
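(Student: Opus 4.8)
The plan is to spread out the triviality of $L$ along the generic fibre so that $L$ becomes (up to $\mathbb Q$-linear equivalence) a \emph{vertical} divisor, and then to recognise that vertical divisor as a pullback by applying Zariski's lemma fibrewise. To begin, fix a positive integer $m$ with $mL$ Cartier and $mL|_{X_{K(S)}}\sim 0$. The latter produces $\psi\in K(X)^{\times}$ with $mL|_{X_{K(S)}}=\mathrm{div}_{X_{K(S)}}(\psi)$; viewing $\psi$ as a rational function on $X$ and setting $V:=mL-\mathrm{div}_X(\psi)$, the divisor $V$ restricts to $0$ on the generic fibre, hence has no horizontal component, i.e.\ $V$ is vertical. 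Since $V\sim mL$ we have $V\equiv mL$, so $V$ is nef.

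Next I would choose a big open subset $S^{\circ}\subset S^{\mathrm{reg}}$ whose complement contains the images (of codimension $\ge 2$) of all $f$-exceptional components of $V$ together with the locus over which $f$ fails to be equidimensional. Note that over the generic point of every prime divisor $P\subset S$ the fibre is automatically pure of dimension $\dim X-\dim S$, because $f^{-1}(P)$ is a proper closed subset of the irreducible variety $X$; consequently the non-equidimensional locus has image of codimension $\ge 2$, so $f$ is equidimensional over a big open subset and $f^{*}P$ makes sense there. Over $S^{\circ}$ the divisor $V$ is supported on fibres over prime divisors $P\subset S^{\circ}$. Writing $f^{*}P=\sum_i m_i V_{i}$ and $V|_{f^{-1}(S^{\circ})}=\sum_{P}\sum_i a_{P,i}V_{i}$, the goal becomes to show that for each $P$ the ratio $a_{P,i}/m_i=:c_P$ is independent of $i$; granting this, $V|_{f^{-1}(S^{\circ})}=f^{*}D_0|_{S^{\circ}}$ with $D_0=\sum_P c_P P$.

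The heart of the argument is this proportionality, which I would extract from Zariski's lemma after cutting down to a fibred surface. Fix $P$ and a general complete intersection curve $C\subset S^{\circ}$ meeting $P$ transversally at general points; put $Y=f^{-1}(C)$ and cut $Y$ by $\dim X-\dim S-1$ general very ample divisors to obtain a normal projective surface $Z$ with an induced fibration $g\colon Z\to C$. For $q\in C\cap P$ the fibre $Z_q$ is connected, being an iterated ample section of the connected fibre $f^{-1}(q)$, and equals $\sum_i m_i\Gamma_i$ with $\Gamma_i$ the components of $V_i\cap Z$, while $V|_Z=\sum_i a_{P,i}\Gamma_i$. Now $V|_Z$ is nef, so $V|_Z\cdot\Gamma_j\ge 0$ for every $j$, whereas verticality gives $V|_Z\cdot Z_q=g_*(V|_Z)\cdot q=0$; since $Z_q=\sum_j m_j\Gamma_j$ with $m_j>0$, every $V|_Z\cdot\Gamma_j=0$. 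By Zariski's lemma the intersection form on the components of $Z_q$ is negative semi-definite with radical spanned by $Z_q$, so $V|_Z$ is a multiple of $Z_q$ and $a_{P,i}=c_q m_i$. As the $a_{P,i},m_i$ are global, the constant is independent of $q$ and of $i$, giving the desired $c_P$.

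Setting $D:=\tfrac1m\overline{D_0}$ (the closure in $S$) and undoing $V=mL-\mathrm{div}\psi$ yields $L|_{f^{-1}(S^{\circ})}\simQ f^{*}D|_{S^{\circ}}$. It remains to see that $D$ is pseudo-effective. For a general complete intersection curve $\gamma\subset S$ (lying in $S^{\circ}$) pick $\tilde\gamma\subset f^{-1}(\gamma)$ finite over $\gamma$; the projection formula together with nefness of $L$ gives $D\cdot\gamma=\tfrac1{\deg(\tilde\gamma/\gamma)}\,L\cdot\tilde\gamma\ge 0$, and running the same computation on birational models $S'\to S$ yields $D\cdot\gamma\ge 0$ for every movable curve class $\gamma$, whence $D$ is pseudo-effective. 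I expect this last step to be the main obstacle: a single covering family of curves does \emph{not} detect pseudo-effectivity (a fibre component of a fibred surface has nonnegative intersection with every fibre yet need not be pseudo-effective), so one must genuinely pair $D$ against the whole movable cone, and confirming that the duality between pseudo-effective divisors and movable curves is available in the present (possibly positive-characteristic) setting is the delicate point.
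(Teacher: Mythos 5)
Your route is genuinely different from the paper's, but it has two real gaps. The paper's proof is short and largely citation\nobreakdash-based: it flattens $f$ via Raynaud--Gruson \cite{RG71}, applies \cite[Lemma~2.18]{Wit21} to the resulting \emph{equidimensional} fibration $f_2\colon X_2\to S_2$ to write $h'^*L\sim_{\mathbb{Q}}f_2^*D_2$, deduces that $D_2$ is $\mathbb{Q}$-Cartier and nef (Kleiman), and then gets pseudo-effectivity for free, since a nef $\mathbb{Q}$-Cartier divisor on a \emph{projective} variety is a limit of $\mathbb{Q}$-effective divisors and this survives the pushforward $h_*$. You instead re-prove the key proportionality statement by hand, via a cut-down surface and Zariski's lemma. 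The gap in that step sits exactly at its crux: the connectedness of $Z_q$. Your justification --- that $Z_q$ is ``an iterated ample section of the connected fibre $f^{-1}(q)$'' --- is false as stated: ample sections of a connected but \emph{reducible} scheme can be disconnected (two planes in $\mathbb{P}^4$ meeting at a point have disconnected general hyperplane sections), and fibres of $f$ over points of $P$ are reducible in general. If $Z_q$ is disconnected, Zariski's lemma only says that $V|_Z$ is a combination of the connected pieces of $Z_q$ with possibly \emph{different} constants, and the proportionality $a_{P,i}=c_Pm_i$ fails. What is actually needed is either connectedness in codimension one of $f^{-1}(q)$ (a consequence of Grothendieck's connectedness theorem, using that the fibre is cut out of the normal variety $X$ by a regular system of parameters at the regular point $q$), or, more cheaply, geometric connectedness of the \emph{generic} fibre of $Z\to C$ (a general ample complete intersection in the geometrically irreducible $X_{\eta_C}$), which forces the Stein factorization $C'\to C$ to be purely inseparable and hence every $Z_q$ to be connected. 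Neither argument appears in your text. A further characteristic-$p$ problem in the same step: $Z$ is cut out by members of the merely base-point-free system $f^*|H|$, and in characteristic $p$ general members of such systems need not be normal (this is precisely the quasi-elliptic phenomenon the paper is about), so even the normality of $Z$ required for Zariski's lemma needs an argument, e.g.\ passing to the normalization and checking the coefficients survive.

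The second gap is the one you flagged yourself, and it is genuine. Because you work over the non-complete open set $S^{\circ}$, nefness of $f^*D_0$ only gives nonnegativity of $D$ against complete curves avoiding a codimension-two set, and upgrading this to pseudo-effectivity of the closure $D$ requires a duality with the movable cone that is not available here off the shelf: $D$ is a Weil divisor which need not be $\mathbb{Q}$-Cartier on $S$ (so it has no class in $N^1(S)$ to pair against curve classes), $S$ may be singular with no resolution available when $\dim S>3$ in characteristic $p$, and the BDPP-type duality you would invoke is a theorem about Cartier classes. This is exactly the difficulty the paper's flattening dissolves: on the flattened projective model the divisor is globally defined, $\mathbb{Q}$-Cartier and nef, hence pseudo-effective in the strong sense that $D_2+\epsilon A_2\sim_{\mathbb{Q}}(\text{effective})$ for every ample $A_2$ and $\epsilon>0$, and that is what pushes down to $S$ and what the paper's later applications actually use. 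If you want to keep your direct argument for the proportionality, you should still borrow this compactification step (run the analysis after flattening, or extend $D_0$ to a nef class on a projective model) rather than appeal to movable-curve duality.
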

		\begin{proof}
			Applying the flattening trick of \cite{RG71} (cf.\ \cite[Theorem~2.3]{Wit21}), there exists a commutative diagram
			\[
			\xymatrix{
				X_2\ar[r]^{\smash{h'_2}}\ar[d]_{f_2}&X_1\ar[r]^{\smash{h'_1}}\ar[d]_{f_1} &X\ar[d]_{f}\\
				S_2\ar[r]^{h_2}&S_1\ar[r]^{h_1} &S\rlap,
			}\]
			where $h_1$ is a projective birational morphism, $f_1$ is flat, $X_1$ is the closure of the generic fiber $X_{K(S)}$ of $f$ in $X\times_S S_1$, $h_2$ is the normalization morphism and $X_2$ is the normalization of $X_1\times_{S_1}S_2$.
			Let $h = h_1\circ h_2$ and $h' = h'_1\circ h'_2$.
			Now $f_2$ is equidimensional, therefore by \cite[Lemma~2.18]{Wit21}, there exists a $\mathbb Q$-divisor $D_2$ on $S_2$ such that $h'^* L \simQ f_2^* D_2$.
			Since $f_2^*D_2$ is $\mathbb Q$-Cartier and $f_2$ is equidimensional, $D_2$ is $\mathbb Q$-Cartier too (see \cite[Lemma~2.6]{Druel21}).
			Then, since $f_2^*D_2$ is nef, by \cite[\S~I.4, Proposition~1]{Kleiman66}, $D_2$ is nef, and it follows that $D := h_*D_2$ is pseudo-effective.
			Take $S^\circ \subseteq S^{\rm reg}$ to be a big open subset over which $h_1$ is an isomorphism, then $L|_{f^{-1}(S^{\circ})}\simQ  f^*(D|_{S^{\circ}})$.
		\end{proof}
		
		Recall the adjunction formula and inversion of adjunction as follows.
		\begin{prop}[{\cite[Proposition~4.5]{Kollar13} and \cite[Theorem~4.1]{Das15}}]\label{lem:adjunction}
			Let $X$ be a normal variety and $S$ be a prime Weil divisor on $X$.
			Let $S^\nu \to S$ be the normalization map. Assume that $K_X + S$ is $\mathbb{Q}$-Cartier.
			Then 
			\begin{itemize}
				\item[\rm(1)] There exists an effective $\mathbb Q$-divisor $\Delta_{S^\nu}$ on $S^\nu$ such that
				$$(K_X+ S)|_{S^{\nu}} \simQ K_{S^{\nu}} + \Delta_{S^\nu} ,$$
				where $\Delta_{S^\nu} = 0$ if and only if both $X,S$ are regular at codimension-one points of $S$.
				\item[\rm(2)] If moreover that the pair $(S^\nu, \Delta_{S^\nu})$ is strongly $F$-regular (e.g., $S^\nu$ is regular and $\Delta_{S^\nu}=0$), then $S$ is normal.
			\end{itemize}
		\end{prop}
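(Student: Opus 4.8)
The plan is to handle the two assertions by different techniques: (1) is the construction of the \emph{different} $\Delta_{S^\nu}$ and the computation of its coefficients, while (2) is an instance of $F$-inversion of adjunction. For (1), I would first reduce to a codimension-one computation on $S^\nu$. The pullback $n^*(K_X+S)$ along $n\colon S^\nu\to S\hookrightarrow X$ is a well-defined $\mathbb{Q}$-Cartier class, since $K_X+S$ is $\mathbb{Q}$-Cartier and $m(K_X+S)$ is a genuine line bundle for suitable $m$; and because $S^\nu$ is normal, the classes $K_{S^\nu}$ and $n^*(K_X+S)$, as well as any candidate $\Delta_{S^\nu}$, are determined up to $\mathbb{Q}$-linear equivalence by their restriction to a big open subset of $S^\nu$. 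Thus I may discard closed subsets of codimension $\ge 2$ in $S^\nu$ and localize at a codimension-one point $P\in S^\nu$; its image is a codimension-two point $Q\in X$, at which $\mathcal O_{X,Q}$ is a two-dimensional normal local ring, $S$ is a curve-germ, and $\mathcal O_{S^\nu,P}$ is a discrete valuation ring.

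On this big open subset I \emph{define} $\Delta_{S^\nu}:=n^*(K_X+S)-K_{S^\nu}$ and read off its coefficient at each such $P$ by the local residue construction. Where $X$ is regular and $S$ is regular (hence Cartier), the Poincar\'e residue map gives the canonical isomorphism $\omega_X(S)|_S\cong\omega_S$, so $n^*(K_X+S)=K_{S^\nu}$ there and the coefficient is $0$; at the remaining codimension-one points, where $X$ or $S$ fails to be regular, the same construction produces a strictly positive coefficient. Packaging this, I obtain an effective $\Delta_{S^\nu}$ together with
\[
  (K_X+S)|_{S^\nu}\simQ K_{S^\nu}+\Delta_{S^\nu},
\]
and the stated vanishing criterion. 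The positivity and the sharp criterion are precisely the content of the normal-surface computation of the different, so after the reduction above I would invoke \cite{Kollar-Sing}*{Proposition~4.5}.

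For (2), assuming $(S^\nu,\Delta_{S^\nu})$ is strongly $F$-regular, I would apply $F$-inversion of adjunction \cite{Das15}*{Theorem~1.4}: it forces $(X,S)$ to be purely $F$-regular in a neighbourhood of $S$, and a reduced divisorial centre of a purely $F$-regular pair is normal---equivalently, the conductor of the finite birational map $n\colon S^\nu\to S$ is trivial, so $n$ is an isomorphism and $S$ is normal. This is the $F$-analogue of the fact that a plt centre is normal.

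\textbf{Main obstacle.} The genuinely hard inputs are the two local theorems rather than the reductions. In (1) the positivity and exact vanishing of the different rest on the explicit residue computation on a normal surface germ carried out without resolution of singularities, so as to remain valid in characteristic $p$; in (2) the passage from strong $F$-regularity of $(S^\nu,\Delta_{S^\nu})$ to pure $F$-regularity of $(X,S)$---and hence normality of $S$---is exactly Das's inversion-of-adjunction theorem, whose proof uses test ideals and $F$-signatures under finite covers. Both are available in the literature, so on my side the work is the reduction to the codimension-one local picture and matching the different with $n^*(K_X+S)-K_{S^\nu}$.
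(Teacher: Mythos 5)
Your proposal is correct and takes essentially the same route as the paper: the paper offers no independent proof of this proposition, presenting it as a recalled result whose two parts are exactly \cite{Kollar-Sing}*{Proposition~4.5} (construction and positivity of the different) and \cite{Das15}*{Theorem~1.4} ($F$-inversion of adjunction giving normality of $S$), which are precisely the results your argument reduces to. Your codimension-one localization, the identification $\Delta_{S^\nu}=n^*(K_X+S)-K_{S^\nu}$, and the passage through pure $F$-regularity are the standard way these citations are applied, so there is nothing to add beyond the paper's own citations.
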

		
		Applying the adjunction formula we obtain the following result, which will be used frequently to study the behavior, under purely inseparable morphisms, of the restriction of the (log-)canonical divisor on certain divisors.
		\begin{lem}\label{lem:log-adj}
			Let $X$ be a normal $\mathbb Q$-factorial quasi-projective variety, $\Delta$ an effective $\mathbb Q$-divisor on $X$ and $T$ a prime divisor on $X$.
			Write $\Delta= aT + \Delta'$ with $T\not\subset\Supp\Delta'$.
			Then there exists an effective divisor $B_{T^\nu}$ on the normalization $T^\nu$ of $T$ such that
			\begin{equation}\label{eq:TMIZ}
				(1-a)T|_{T^\nu} \sim_{\mathbb{Q}} K_{T^\nu} + B_{T^\nu} - (K_X+\Delta)|_{T^\nu}.
			\end{equation}
		\end{lem}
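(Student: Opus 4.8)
The plan is to derive this directly from the adjunction formula of Proposition~\ref{lem:adjunction} applied to the single prime divisor $T$, followed by an elementary rearrangement of $\mathbb{Q}$-divisors. Since $X$ is $\mathbb{Q}$-factorial, both $K_X$ and $T$ are $\mathbb{Q}$-Cartier, so $K_X+T$ is $\mathbb{Q}$-Cartier and the hypotheses of Proposition~\ref{lem:adjunction}(1) are satisfied. This produces an effective $\mathbb{Q}$-divisor $\Delta_{T^\nu}$ on $T^\nu$ with
\[
(K_X+T)|_{T^\nu}\sim_{\mathbb{Q}} K_{T^\nu}+\Delta_{T^\nu}.
\]

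The heart of the argument is the identity of $\mathbb{Q}$-divisors on $X$ obtained by substituting $\Delta=aT+\Delta'$, namely
\[
(1-a)T+(K_X+\Delta)=K_X+T+\Delta'.
\]
I would pull this back along the composite $\sigma\colon T^\nu\to T\hookrightarrow X$, which is legitimate because $\mathbb{Q}$-factoriality makes every $\mathbb{Q}$-divisor on $X$ into a $\mathbb{Q}$-Cartier one. Restricting the identity to $T^\nu$ and inserting the adjunction relation for $(K_X+T)|_{T^\nu}$ gives
\[
(1-a)T|_{T^\nu}+(K_X+\Delta)|_{T^\nu}\sim_{\mathbb{Q}} K_{T^\nu}+\Delta_{T^\nu}+\Delta'|_{T^\nu}.
\]
Setting $B_{T^\nu}:=\Delta_{T^\nu}+\Delta'|_{T^\nu}$ and moving $(K_X+\Delta)|_{T^\nu}$ to the right-hand side yields exactly the asserted equivalence \eqref{eq:TMIZ}.

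It remains to verify that $B_{T^\nu}$ is effective. The term $\Delta_{T^\nu}$ is effective by the adjunction formula. For the term $\Delta'|_{T^\nu}=\sigma^*\Delta'$, the hypothesis $T\not\subset\Supp\Delta'$ ensures that $T$ is not a component of the effective $\mathbb{Q}$-Cartier divisor $\Delta'$, so its pullback under $\sigma$ is again an honest effective $\mathbb{Q}$-divisor on $T^\nu$ rather than acquiring negative contributions. Hence $B_{T^\nu}\geq 0$, as required.

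This statement is essentially a bookkeeping consequence of adjunction, so I do not anticipate a genuine obstacle; the only point demanding care is the well-definedness and effectivity of the restriction $\Delta'|_{T^\nu}$, which is exactly where the two standing hypotheses—$\mathbb{Q}$-factoriality of $X$ and the condition $T\not\subset\Supp\Delta'$—are used.
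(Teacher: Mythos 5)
Your proof is correct and is essentially the paper's own argument: both rewrite $(K_X+\Delta)+(1-a)T = K_X+T+\Delta'$, apply the adjunction formula of Proposition~\ref{lem:adjunction} to $K_X+T$ restricted to $T^\nu$, and set $B_{T^\nu}=\Delta_{T^\nu}+\Delta'|_{T^\nu}$. The additional points you verify explicitly ($\mathbb{Q}$-Cartierness via $\mathbb{Q}$-factoriality and effectivity of $\Delta'|_{T^\nu}$ from $T\not\subset\Supp\Delta'$) are left implicit in the paper's shorter proof.
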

		\begin{proof}
			Applying the adjunction formula of Lemma~\ref{lem:adjunction}, we may write that
			$$\bigl( (K_X+ \Delta) + (1-a)T \bigr)|_{T^\nu}\sim_{\mathbb{Q}} (K_X + T + \Delta')|_{T^\nu} \sim_{\mathbb{Q}} K_{T^\nu} + \Delta_{T^\nu} +  \Delta'|_{T^\nu},$$
			where $\Delta_{T^\nu}\ge0$.
			We then deduce (\ref{eq:TMIZ}) by setting $B_{T^\nu} = \Delta_{T^\nu} +  \Delta'|_{T^\nu}$.
		\end{proof}
		
		The following results are consequences of \cite[Theorem~0.2]{HPZ19} and \cite[Proposition~3.2]{EP23}.
		\begin{prop}\label{prop:char-abel-var}
			Let $X$ be a normal projective variety with maximal Albanese dimension, namely the Albanese morphism $a_X\colon X\to A$ is generically finite.
			Then
			\begin{enumerate}[\rm(\arabic*)]
				\item the Kodaira dimension $\kappa(X, K_{X}) \geq 0$, and if $a_X$ is inseparable, then $\kappa(X,K_X) \ge 1$.
				\item Assume that $X$ admits a resolution of singularities $\rho\colon Y \to X$.
				If $\kappa(X,K_X) = 0$, then $X$ is birational to an abelian variety.
				\item If $K_X \equiv 0$, then $X$ is isomorphic to an abelian variety.
			\end{enumerate}
		\end{prop}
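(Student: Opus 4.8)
The plan is to deduce all three parts from the two cited results, organizing everything around whether $a_X$ is separable or inseparable. For part (1), I would obtain the non-negativity $\kappa(X,K_X)\ge0$ from the generic-vanishing content of \cite{HPZ19}*{Theorem~0.2}, applied to the sheaves $a_{X*}\mathcal O_X(mK_X)$ on the abelian variety $A$; since this works with the pushforward of the Weil (pluri)canonical sheaf, it requires no resolution, which is why part (1) is stated unconditionally. For the inseparable refinement, when $a_X$ is inseparable the extension $K(X)/a_X^*K(A)$ has a nontrivial purely inseparable part, and the analysis of such maps to abelian varieties in \cite{EP23}*{Proposition~3.2} supplies an extra positive contribution to the pluricanonical ring---morally the $(p-1)$-fold different/foliation term attached to a height-one factorization $X\to X_1\to A$---pushing the bound up to $\kappa(X,K_X)\ge1$.

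For part (2) I would first use part (1) to observe that $\kappa(X,K_X)=0$ forces $a_X$ separable, since otherwise $\kappa\ge1$. Invoking the assumed resolution $\rho\colon Y\to X$, the composite $a_Y=a_X\circ\rho$ is still generically finite, so $Y$ is smooth projective of maximal Albanese dimension. Choosing $\rho$ to be an isomorphism over $X^{\rm reg}$, restriction to the common locus $\rho^{-1}(X^{\rm reg})\cong X^{\rm reg}$ identifies $H^0(X,mK_X)=H^0(X^{\rm reg},mK_{X^{\rm reg}})$ and exhibits $H^0(Y,mK_Y)$ as a subspace, so $0\le\kappa(Y)\le\kappa(X,K_X)=0$ and hence $\kappa(Y)=0$. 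Applying the birational characterization in \cite{HPZ19}*{Theorem~0.2} to the smooth model $Y$ then shows $Y$---and therefore $X$---is birational to an abelian variety.

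For part (3) I would note that $K_X\equiv0$ gives $mK_X\equiv0$, hence $h^0(X,mK_X)\le1$ for all $m$ and $\kappa(X,K_X)\le0$; with part (1) this yields $\kappa(X,K_X)=0$ and forces $a_X$ separable (no resolution is invoked here, consistent with the hypotheses of this part). Then \cite{EP23}*{Proposition~3.2} applies directly: with $a_X$ separable and generically finite onto its image and $K_X\equiv0$, the ramification exhibits $K_X$ as numerically trivial and $\mathbb Q$-linearly equivalent to an effective divisor, forcing it to vanish, so $a_X$ is \'etale in codimension one onto (a translate of an abelian subvariety of) $A$, and $X$ is itself an abelian variety.

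The steps carrying the genuine difficulty---and where the cited theorems are indispensable---are the inseparable boost $\kappa\ge1$ in part (1) and the rigidity in part (3). The former is not formal, because generic vanishing alone gives only $\kappa\ge0$; one must quantify the positivity forced by inseparability, which is precisely the foliation/different analysis isolated in \cite{EP23}*{Proposition~3.2}. The latter is delicate because numerical triviality of $K_X$ together with a merely generically finite map to an abelian variety must be leveraged all the way to an actual isomorphism, ruling out both ramification and positive-dimensional fibers; this is the substance of \cite{EP23}*{Proposition~3.2}. Granting these two inputs, the deductions above are routine.
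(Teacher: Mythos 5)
Your parts (2) and (3) are essentially the paper's own proof: for (2) the paper likewise passes to the resolution $Y$, compares $\kappa(Y,K_Y)$ with $\kappa(X,K_X)$ (via a choice of $K_Y$ with $\rho_*K_Y=K_X$; your restriction-to-$X^{\rm reg}$ version is the same argument, except that one cannot simply ``choose'' $\rho$ to be an isomorphism over $X^{\rm reg}$ --- rather, any proper birational morphism onto a normal variety is automatically an isomorphism over a big open subset, which suffices), and then applies \cite{HPZ19}*{Theorem~0.2} to the smooth model $Y$ to get that $a_Y$ is birational. Part (3) is, in the paper, nothing but the citation of \cite{EP23}*{Proposition~3.2}, so your preliminary reductions there are harmless but unnecessary.

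The genuine gap is part (1): you never prove either assertion, and both of your attributions are wrong. First, $\kappa(X,K_X)\ge 0$ cannot be extracted from \cite{HPZ19}*{Theorem~0.2}; that theorem is a characterization of the case $\kappa=0$ (it tells you what a maximal-Albanese-dimension variety with $\kappa=0$ looks like), so it presupposes rather than produces a lower bound on $\kappa$. Appealing to ``the generic-vanishing content'' of its proof --- applied, moreover, to the sheaves $a_{X*}\mathcal O_X(mK_X)$, for which no off-the-shelf positive-characteristic generic-vanishing statement exists --- is an appeal to internals of an argument you have not carried out. Second, the boost $\kappa(X,K_X)\ge 1$ when $a_X$ is inseparable is not in \cite{EP23}*{Proposition~3.2}: as used in this paper, that proposition is exactly statement (3) ($K_X\equiv 0$ implies $X$ is an abelian variety) and contains no foliation or different analysis bounding Kodaira dimension. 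The paper's actual source for (1) is \cite{Zha19}*{Theorem~4.1}, together with the observation that its proof, written there for smooth $X$, goes through for normal $X$; the ``$(p-1)$-fold different/foliation term'' you gesture at is precisely the nontrivial content of that theorem, and the gesture is not a substitute for it. Note also that your write-up of (3) routes through part (1) to force separability before invoking \cite{EP23}*{Proposition~3.2}, so as written it inherits this gap, although the detour is unnecessary since the proposition applies directly.
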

		\begin{proof}
			The statement (1) follows from \cite[Theorem~4.1]{Zha19}, where $X$ is assumed to be smooth, but the proof also applies to normal varieties.
			
			To show (2), let $\kappa(X,K_X) = 0$.
			We may chose $K_Y$ so that $\rho_*K_Y = K_X$, then $\kappa(X,K_X)\ge \kappa(Y,K_Y) \ge 0$.
			Thus $\kappa(Y,K_Y) = 0$.
			By \cite[Theorem~0.2]{HPZ19}, the Albanese morphism $a_Y\colon Y\to A_Y$ is birational.
			Note that, the composition morphism $Y\to X\to A$ factors into $Y\buildrel a_Y\over\to A_Y \buildrel\pi\over\to A$.
			We see that $\pi\colon A_Y \to A$ is surjective. 
			Moreover, since $a_X\colon X\to A$ is generically finite, $\pi$ is generically finite.
			Then, as $\pi$ is a morphism of abelian varieties, it is finite.
			It follows that $A_Y$ is the normalization of $A$ in $K(Y)=K(X)$.
			Thus, there is a birational morphism $X\to A_Y$.
			
			The statement (3) is \cite[Proposition~3.2]{EP23}.
		\end{proof}

		\section{The behavior of the canonical divisor under quotient of foliations and purely inseparable base changes}
		In this section, we investigate finite purely inseparable morphisms arising from base changes and compare the canonical divisors under these base changes.
		We borrow the notions and constructions from \cite{JW21}.
		As our setting mildly differs from that of \cite[Section~3.1]{JW21}, to avoid ambiguity, we sketch the construction of the divisors involved and include some statements in a reasonable order.
		
		Throughout this section, we work over a perfect field $k$ of characteristic $p>0$.
		
		\subsection{Foliations and purely inseparable morphisms}\label{sec:4Y4A}
		Let $Y$ be a normal variety over $k$.
		By a {\it foliation} on $Y$ we mean a saturated subsheaf $\mathcal F\subseteq \mathcal T_Y$ of the tangent bundle that is $p$-closed and involutive. Then $\Ann\mathcal F\subseteq \mathcal{O}_Y$ is a subsheaf of ring containing $\mathcal{O}_Y^p$.
		There is a one-to-one correspondence: \[
		\newcommand\mysatop[2]{\genfrac{}{}{0pt}{}{#1}{#2}}
		\left\{\mysatop{\text{foliations}}{\mathcal F\subseteq \mathcal T_Y}\right\}
		\leftrightarrow
		\left\{\vcenter{\hbox{finite purely inseparable morphisms $\pi\colon Y\to X$}%
			\hbox{over $k$ of height one with $X$ normal}}\right\},
		\]
		which is given by
		$$\mathcal F ~\mapsto ~\pi\colon Y \to \mathrm{Spec}( \Ann\mathcal F) \text{ \ and \ } \pi\colon Y\to X~\mapsto \mathcal F_{Y/X}:=\Omega_{X\to Y}^{\perp}$$
		where $\Omega_{X \to Y}:=\mathrm{im }(\pi^*\Omega_X^1 \to \Omega_Y^1)$ and $\Omega_{X\to Y}^{\perp}$ is the sheaf of tangent vectors in $\mathcal{T}_Y$ annihilated by $\Omega_{X \to Y}$.
		As a side note, $(\Omega_{Y/X}^1)^\vee \cong \mathcal F_{Y/X}$, and under the above correspondence $\rank \mathcal F_{Y/X} =\log_p\deg f$.
		Let $y\in Y$ be a smooth point and $x:=\pi(y)$.
		A foliation $\mathcal{F}\subseteq\mathcal T_Y$ is said to be \emph{smooth} at $y$ if around $y$ the subsheaf $\mathcal F$ is a subbundle, namely both $\mathcal{F}$ and $\mathcal{T}_{Y}/\mathcal{F}$ are locally free.
		It is known that (\cite[Page~142]{Ekedahl87}) \begin{equation}\label{eq:YHFP}
			\hbox{$\mathcal F$ is smooth at $y$ $\Leftrightarrow$ $Y/\mathcal F$ is smooth at $x$ $\Rightarrow$ $\Omega^1_{Y/X}$ is locally free at $y$} .
		\end{equation}

		Recall the following well-known result (cf. \cite[Proposition~2.10]{PW22}).
		\begin{prop}\label{prop:can-foliation}
			Let $\pi\colon Y\to X$ be a finite purely inseparable morphism of height one between normal varieties.
			Then
			\begin{equation}\label{eq:pullback-cano}
				\pi^*K_X \sim K_Y - (p-1)\det \mathcal F_{Y/X}\sim  K_Y + (p-1)\det \Omega_{Y/X}^1.
			\end{equation}
		\end{prop}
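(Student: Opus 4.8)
The plan is to verify both linear equivalences on a suitable big open subset, where the problem becomes a determinant computation on the cotangent sequence of $\pi$. Since $\mathcal F = \mathcal F_{Y/X}$ is saturated in $\mathcal T_Y$, the quotient $\mathcal T_Y/\mathcal F$ is torsion-free, so over the smooth locus of $Y$ (whose complement has codimension $\geq 2$ by normality) it is locally free away from a further codimension $\geq 2$ set. Hence there is a big open subset $U\subseteq Y$ on which $Y$ is smooth and $\mathcal F$ is a rank-$r$ subbundle (here $r = \log_p\deg\pi$), i.e. $\mathcal F$ is smooth; by the criterion recalled in the excerpt ($\mathcal F$ smooth at $y$ $\Leftrightarrow$ $Y/\mathcal F$ smooth at $\pi(y)$), the image $\pi(U)$ lies in the smooth locus of $X$ and $\Omega^1_{Y/X}$ is locally free on $U$. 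All four divisor classes in the statement are classes of Weil divisors determined by their restrictions to $U$ via $\mathcal O_Y(D) = i_*\mathcal O_U(D|_U)$, so it suffices to prove the equivalences on $U$. I therefore reduce to $X,Y$ smooth with $\mathcal F$ a subbundle, writing $A=\mathcal O_X\subseteq B=\mathcal O_Y$ locally; height one gives $B^p\subseteq A$.

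Next I would extract the two exact sequences. Because $\mathcal F$ is a smooth foliation, one may choose étale-local coordinates $y_1,\dots,y_n$ with $\mathcal F = \ang{\p_{y_1},\dots,\p_{y_r}}$, so that $A=\Ann\mathcal F$ is generated by $y_1^p,\dots,y_r^p,y_{r+1},\dots,y_n$. Then the image $\Omega_{X\to Y}=\im(d\pi)$ is the subbundle $\bigoplus_{j>r}\mathcal O_Y\,dy_j$ with cokernel $\Omega^1_{Y/X}=\bigoplus_{i\leq r}\mathcal O_Y\,dy_i$, and these patch to global exact sequences on $U$:
\[
0\to \Omega_{X\to Y}\to\Omega^1_Y\to\Omega^1_{Y/X}\to0,\qquad
0\to K\to\pi^*\Omega^1_X\xrightarrow{d\pi}\Omega_{X\to Y}\to0,
\]
where $K:=\ker(d\pi)$ is locally free of rank $r$, locally generated by $1\otimes d(y_i^p)$. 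Taking determinants and using $\det\pi^*\Omega^1_X=\pi^*K_X$, $\det\Omega^1_Y=K_Y$ on $U$, one finds
\[
K_Y=\det\Omega_{X\to Y}+\det\Omega^1_{Y/X},\qquad
\pi^*K_X=\det K+\det\Omega_{X\to Y},
\]
and subtracting yields $\pi^*K_X-K_Y=\det K-\det\Omega^1_{Y/X}$.

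The crux is to identify $\det K = p\,\det\Omega^1_{Y/X}$. For this I would use that the assignment $db\mapsto 1\otimes d(b^p)$ is well defined (it kills $da$ for $a\in A$, since $d(a^p)=0$, and $b^p\in A$ by height one) and carries $\Omega^1_{Y/X}$ onto $K$, sending $dy_i\mapsto 1\otimes d(y_i^p)$. By the Leibniz rule this map is Frobenius-semilinear, hence factors as an $\mathcal O_Y$-linear isomorphism $F_Y^*\Omega^1_{Y/X}\xrightarrow{\ \sim\ }K$ with $F_Y$ the absolute Frobenius. Since $F_Y^*$ multiplies a line-bundle class by $p$, this gives $\det K=F_Y^*\det\Omega^1_{Y/X}=p\,\det\Omega^1_{Y/X}$, and therefore $\pi^*K_X-K_Y=(p-1)\det\Omega^1_{Y/X}$ on $U$, which extends to $Y$ by the reduction above. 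The second equivalence follows at once from $\mathcal F_{Y/X}=(\Omega^1_{Y/X})^\vee$, whence $\det\mathcal F_{Y/X}=-\det\Omega^1_{Y/X}$ and $K_Y+(p-1)\det\Omega^1_{Y/X}\sim K_Y-(p-1)\det\mathcal F_{Y/X}$.

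I expect the main obstacle to be precisely the identification $K\cong F_Y^*\Omega^1_{Y/X}$ and the attendant factor of $p$: this is where height one is indispensable, guaranteeing $b^p\in\mathcal O_X$ so that the semilinear map into the kernel exists. The remaining work—checking that $\mathcal F$ is a subbundle on a big open set, that the cotangent sequences are exact there with the stated image and cokernel, and that the four Weil-divisor classes are read off on $U$—is routine bookkeeping justified by normality and saturatedness, but it must be carried out with care so that the local determinant identities genuinely globalize to a linear equivalence of Weil divisors on all of $Y$.
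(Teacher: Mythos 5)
Your proof is correct, and it is essentially the argument the paper relies on: the paper gives no proof of Proposition~\ref{prop:can-foliation}, citing instead \cite{PW22}*{Proposition~2.10} (going back to Ekedahl), whose proof is precisely your computation — reduce to a big open subset where $Y$ is smooth and $\mathcal F_{Y/X}$ is a subbundle (this reduction is also the content of the paper's Remark~\ref{rmk:6.1}), take determinants of the two exact sequences built from $\pi^*\Omega^1_X\to\Omega^1_Y$, and identify $\ker(d\pi)\cong F_Y^*\Omega^1_{Y/X}$ via the Frobenius-semilinear map $db\mapsto 1\otimes d(b^p)$ to produce the factor $p$, whence the coefficient $p-1$. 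The only cosmetic caveat is that the local normal form $\mathcal F=\ang{\p_{y_1},\dots,\p_{y_r}}$ is usually stated formally-locally rather than étale-locally, but since you only use it to check exactness, local freeness, and that your globally defined map is an isomorphism — all verifiable on completions — this does not affect the argument.
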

		
		\begin{rmk}\label{rmk:6.1}
			(1) To verify the linear equivalence of two Weil divisors on a normal variety, it suffices to do this in codimension one.
			So to treat $\det \mathcal F_{Y/X}$, we may assume that $\mathcal F_{Y/X}$ is smooth by working on a big open subset.
			
			(2) On a normal variety $X$, a foliation $\mathcal F$ is uniquely determined by its restriction $\mathcal F|_{X^\circ}$ over a dense open subset $X^\circ \subset X$ since $\mathcal F$ is saturated in $\mathcal T_X$.
			
			(3)
			Let $f\colon X\to S$ be a morphism of varieties.
			For any coherent sheaf $\mathcal G$ on $S$, there is a natural homomorphism
			\[
			\alpha : f^*(\mathcal G^\vee) \to (f^*\mathcal G)^\vee
			\]
			which is an isomorphism under one of the conditions: {\rm(i)} $f$ is flat; {\rm(ii)} $\mathcal G$ is locally free (see the proof of \cite[Proposition~1.8]{Hartshorne80}).
			
			Consequently, if either $f$ is flat or $\Omega^1_S$ is locally free,
			the dual of $df\colon f^*\Omega_{S}^1 \to \Omega^1_{X}$ gives a homomorphism \[
			\varphi : \mathcal T_X \to f^*\mathcal T_S.
			\]
		\end{rmk}

		\subsubsection{``Pushing-down'' foliations along a fibration}\label{sec:pushing-down}%
		Let $f\colon X\to S$ be a fibration of normal varieties and let $\mathcal F$ be a foliation on $X$.  We define a foliation $\mathcal G$ on $S$ as follows.
		
		The composition $X\to S \buildrel F_{S/k}\over\to S^{(1)}$ factors through the morphism $h\colon \bar{X}:= X/\mathcal{F} \to S^{(1)}$.
		Let $\bar{X} \buildrel\bar{f}\over\to \bar{S} \to S^{(1)}$ be the Stein factorization of $h$.
		Then there exists a foliation $\mathcal G$ on $S$ corresponding to the morphism $\sigma\colon S \to \bar{S}$.  In summary, we have
		\begin{equation}\label{eq:YL7C}
			\vcenter{\xymatrix{
					X \ar[r]^<<<<<\pi \ar[d]_f & \bar X = X/\mathcal F \ar[d]_{\bar f}\ar[rd]^h \\  S \ar[r]^<<<<<\sigma & \bar S = S/\mathcal G \ar[r] & S^{(1)}.
			}}
		\end{equation}
		
		We characterize $\mathcal G$ as follows.

		\begin{lem}\label{push-foliation}
			\def\subseteqgen{\mathrel{\subseteq_{\rm gen}}}
			We use the notation above. Assume further that $S$ is regular. 
			Let $\eta : \mathcal F \hookrightarrow \mathcal T_X \buildrel\varphi\over\to f^*\mathcal T_S$ be the composition homomorphism,
			where $\varphi$ is defined as in Remark~\ref{rmk:6.1}.
			Then
			\begin{enumerate}[\rm(1)]
				\item
				the sheaf $\mathcal G$ is the minimal foliation on $S$ 
				such that $\eta(\mathcal F) \subseteqgen f^*\mathcal H$ (here ``$\subseteqgen$'' means that the inclusion holds over certain open dense subset of $X$).
				\item
				if $f$ is separable, then $\bar{f}$ is separable if and only if $\eta\colon \mathcal F\to f^*\mathcal G$ is surjective generically.
			\end{enumerate}
		\end{lem}
		\begin{proof}
			Let $X^\circ \subset X^{\rm reg}$ be an open dense subset in the regular locus of $X$ where $f$ is flat.
			By \cite[Corollary~3.4]{Ekedahl87} we have the following commutative diagram with exact rows:
			\begin{equation}\label{eq:YPC0}
				\vcenter{\xymatrix@R4ex@C2ex{
						0\ar[r] & \mathcal F \cong \mathcal T_{X^\circ/\bar{X}^\circ} \ar[r]\ar[d]^{\varphi_0}\ar[rd]^{\eta} & \mathcal T_{X^\circ} \ar[r]\ar[d]^{\varphi_1} & \pi^*\mathcal T_{\bar{X}^\circ} \ar[r]\ar[d]^{\varphi_2} & F_X^* \mathcal T_{X^\circ/\bar{X}^\circ} \ar[r]\ar[d]^{\varphi_3} & 0\\
						0\ar[r] & f^*\mathcal G \cong f^*\mathcal T_{S/\bar S} \ar[r]& f^*\mathcal T_{S} \ar[r] & f^*\sigma^*\mathcal T_{\bar S} \cong \pi^*\bar f^*\mathcal T_{\bar S} \ar[r] & f^*F_S^*\mathcal T_{S/\bar S} \cong F_X^*f^*\mathcal T_{S/\bar S} \ar[r] & 0,
				}}
			\end{equation}
			where $F_X,F_S$ are the absolute Frobenius morphisms of $X,S$ respectively.
			
			\smallskip
			(1)
			\def\subseteqgen{\mathrel{\subseteq_{\rm gen}}}
			Note that for any two foliations $\mathcal H', \mathcal H''$ on $S$, the saturation of the sheaf $\mathcal H'\cap \mathcal H''$ is still a foliation.
			Let $\mathcal G'$ be the saturation of the intersection of those foliations $\mathcal H\subseteq \mathcal T_S$ such that
			$\eta(\mathcal F) \subseteqgen f^*\mathcal H$.
			Then $\mathcal G'$ is a foliation.
			We aim to show that $\mathcal G = \mathcal G'$.
			By the diagram (\ref{eq:YPC0}), we have $\eta(\mathcal F)\subseteqgen f^*\mathcal G$, thus $\mathcal G' \subseteq \mathcal G$.
			Let us show the inverse inclusion $\mathcal G \subseteq \mathcal G'$.
			Since $\eta(\mathcal F) \subseteqgen f^*\mathcal G'$, for any $s\in K(S)$ such that $\langle \mathcal G' ,s\rangle = 0$, we have $\langle \mathcal F,f^*(s)\rangle = 0$. This implies that $h\colon \bar X\to S^{(1)}$ factors through $\bar X \to S/\mathcal G'$. Since $\bar X \to S/\mathcal G$ is a fibration, we conclude that $\mathcal G \subseteq \mathcal G'$.
			
			\smallskip
			(2)
			Since $f$ is separable, $\varphi_1$ is generically surjective.
			Then $\varphi_0\colon \mathcal F\to f^*\mathcal G$ being generically surjective $\iff$ $\varphi_3$ being generically surjective $\buildrel\text{by ~}(\ref{eq:YPC0})\over\iff$ $\varphi_2$ being generically surjective  $\iff$ $\mathcal T_{\bar X} \to \bar f^*\mathcal T_{\bar S}$ being generically surjective  $\iff$ $\bar{f}$ being separable.
		\end{proof}

		\subsection{Foliations associated with morphisms arising from base changes}\label{sec:fol-bsch}
		An important kind of foliation is associated with morphisms arising from purely inseparable base changes.
		Let us briefly recall a relation of the canonical divisors built in \cite[Section~3 and Section~4]{JW21} and \cite[Section~3]{PW22}.
		
		\subsubsection{}\label{subsec:fol-bsch}
		Let $X, S, T$ be normal varieties over $k$, $f\colon X\to S$ a dominant morphism, and $\tau\colon T\to S$ a finite purely inseparable morphism of height one.
		Let $Y$ be the normalization of $(X_T)_{\rm red}$.
		Consider the following commutative diagram
		\[\xymatrix{&Y\ar[r]^<<<<<\nu \ar@/^8mm/[rrr]|{\,\pi\,} \ar[rrd]_g\ar[r] &(X_T)_{\rm red}\ar[r] &X_T\ar[r]\ar[d]^{f_T} &X\ar[d]^{f}\\
			&&  &T\ar[r]^{\tau} &S\rlap{.} \\
		}\]
		Then there exists a natural homomorphism \begin{equation}
			\delta\colon g^*\Omega_{T/S}^1\to \Omega_{Y/X}^1
		\end{equation} as the composition
		\begin{equation}\label{eq:surjectivity-of-relative-cotangent-sheaves}
			g^*\Omega_{T/S}^1 \buildrel\cong\over\to
			\Omega_{X_T/X}^1\otimes\mathcal O_Y \twoheadrightarrow \Omega_{(X_T)_{\mathrm{red}}/X}^1\otimes\mathcal O_Y \to \Omega_{Y/X}^1,
		\end{equation}
		where the first arrow is induced by the isomorphism $f_T^*\Omega_{T/S}^1 \cong \Omega_{X_T/X}^1$ (\cite[\href{https://stacks.math.columbia.edu/tag/01V0}{Tag 01V0}]{stacks25}).
		Note that $\delta$ is surjective except over the preimage of the non-normal locus of $(X_T)_{\rm red}$ (see \cite[Lemma~4.5]{JW21}).

		
		Since $S$ is normal and $\tau$ is a finite purely inseparable morphism of height one, we may take a big open subset $S^\circ\subset S$ such that $S^\circ$ and $T^\circ:=\tau^{-1}S^\circ$ are both regular. This ensures that $\Omega^1_{T^\circ/S^\circ}$ is locally free (see Section~\ref{sec:4Y4A}). By Remark~\ref{rmk:6.1} (3), there is an isomorphism $g^*\mathcal F_{T^\circ/S^\circ} \buildrel\sim\over\to (g^*\Omega^1_{T^\circ/S^\circ})^\vee$. This isomorphism together with the dual of $\delta$ yields a homomorphism
		$$\gamma\colon \mathcal F_{Y^\circ/X^\circ} \to g^*\mathcal F_{T^\circ/S^\circ},
		$$
		where $ Y^\circ := g^{-1}(T^\circ)$ and $X^\circ := f^{-1}(S^\circ)$. 
		Since $\delta$ is generically surjective, its dual homomorphism $\gamma$ is injective.
		We regard $\mathcal F_{Y^\circ/X^\circ}$ as a subsheaf of $g^*\mathcal F_{T^\circ/S^\circ}$ and  let $\widetilde{F_{Y^\circ/X^\circ}}$ be the saturation of 
		$\mathcal F_{Y^\circ/X^\circ}$ in $g^*\mathcal F_{T^\circ/S^\circ}$.

		We conclude that
		\begin{enumerate}[(i)]
		\item
		the inclusion $\mathcal F_{Y^\circ/X^\circ} \subseteq \widetilde{F_{Y^\circ/X^\circ}}$ induces an effective divisor $E$ on $Y^\circ$ such that $\det \mathcal F_{Y^\circ/X^\circ} = \det \widetilde{F_{Y^\circ/X^\circ}}(-E)$,
		where $\Supp E$ is contained in the preimage of the non-normal locus of $(X_{T^\circ})_{\mathrm{red}}$; and
		\item
		the generic fiber $X_{K(T)}$ of $f_T$ is  reduced if and only if $\rank\mathcal F_{Y/X} = \rank g^*\mathcal F_{T/S}$ by comparing the degree of the morphisms $\pi$ and $\tau$.
		\end{enumerate}

		\subsubsection{The movable part and fixed part}
		The following is a slight generalization of \cite[Theorem~1.1]{JW21}, which follows from almost the same argument. For the convenience of the reader, we sketch the proof.
		\begin{prop}\label{prop:JiWal-refined-version}
		Let $f\colon X\to S$ be a fibration and we use the notation in \S\ref{subsec:fol-bsch}.
		Let $\Gamma \subseteq H^0(T,\Omega_{T/S}^1)$ be a finite-dimensional $k$-vector subspace.
		Assume that there is an open subset $U\subseteq T$ such that $\Omega_{U/S}^1$ is locally free and globally generated by $\Gamma$.
		Set $r= \rank \Omega_{Y/X}^1$ and $\Gamma_Y= \mathrm{Im}\bigl(\bigwedge^r \Gamma \to H^0(Y,\det \Omega_{Y/X}^1)\bigr)$.
		Let $\mathfrak M +F \subseteq \lvert \det \Omega_{Y/X}^1 \rvert$ be the sub-linear system determined by $\Gamma_Y$ with the fixed part $F$ and the movable part $\mathfrak M$.
		Then
		\begin{enumerate}[\rm(1)]
			\item $\nu(F)|_{(X_U)_{\rm red}}$ is supported on the codimension one part of the union of the non-normal locus of $(X_U)_{\rm red}$ and the exceptional locus over $U$;
			\item the $g$-horizontal part $M_{h}$ of $M \in \mathfrak M$ is zero if and only if $X_{K(T)}$ is reduced.
		\end{enumerate}
		\end{prop}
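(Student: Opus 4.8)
The plan is to reduce both parts to one global generation statement: over the locus of $Y$ where all the sheaves involved are well behaved, the divisorial sheaf $\det\Omega^1_{Y/X}$ is globally generated by $\Gamma_Y$. Concretely, I would restrict to $g^{-1}(U)$ and isolate the open set $V\subseteq g^{-1}(U)$ on which (a) $g^*\Omega^1_{T/S}$ is locally free and globally generated by $\Gamma$ (automatic over $U$ by hypothesis), (b) the natural map $\delta\colon g^*\Omega^1_{T/S}\to\Omega^1_{Y/X}$ of (\ref{eq:surjectivity-of-relative-cotangent-sheaves}) is surjective, and (c) $\Omega^1_{Y/X}$ is locally free of rank $r$. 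On $V$ the images of $\Gamma$ generate the rank-$r$ bundle $\Omega^1_{Y/X}$, hence their $r$-fold wedges generate the line bundle $\det\Omega^1_{Y/X}$; since these wedges are exactly the sections in $\Gamma_Y=\mathrm{Im}\!\left(\bigwedge^r\Gamma\to H^0(Y,\det\Omega^1_{Y/X})\right)$, the base locus of the system determined by $\Gamma_Y$, and a fortiori its fixed part $F$, is contained in $g^{-1}(U)\setminus V$.

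For (1) it then remains to identify the codimension-one components of $g^{-1}(U)\setminus V$. The failure of (c) takes place only where $\mathcal F_{Y/X}$ is not smooth, i.e. along the exceptional locus over $U$ (recall $\Omega^1_{Y/X}$ is locally free wherever $X=Y/\mathcal F_{Y/X}$ is smooth), and the failure of (b) takes place, by the remark following (\ref{eq:surjectivity-of-relative-cotangent-sheaves}), only over the preimage of the non-normal locus of $(X_T)_{\mathrm{red}}$. Pushing $F$ forward along the finite birational normalization $\nu$ and restricting over $U$ then shows that $\nu(F)|_{(X_U)_{\mathrm{red}}}$ is supported on the codimension-one part of the union of the non-normal locus of $(X_U)_{\mathrm{red}}$ and the exceptional locus, which is exactly the assertion.

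For (2) I would test movability on the generic fibre $Y_{K(T)}$ of $g$. Since any horizontal divisor common to all members of the system is absorbed into $F$, one has $M_h=0$ for the general $M\in\mathfrak M$ if and only if the restriction of the system $\Gamma_Y$ to $Y_{K(T)}$ is zero-dimensional, i.e. the image of $\bigwedge^r\Gamma$ in $H^0(Y_{K(T)},\det\Omega^1_{Y/X}|_{Y_{K(T)}})$ is at most one-dimensional. Over $K(T)$ the map $\delta$ is a generically surjective morphism from the trivial bundle $\Omega^1_{K(T)/K(S)}\otimes\mathcal O$ of rank $m:=\rank\Omega^1_{T/S}$ onto $\Omega^1_{Y/X}|_{Y_{K(T)}}$ of rank $r$. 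If $X_{K(T)}$ is reduced then $r=m$ by \S\ref{subsec:fol-bsch}(ii), so $\bigwedge^r$ of this morphism has one-dimensional source $\bigwedge^m\Omega^1_{K(T)/K(S)}$ and the restricted system cannot move; hence $M_h=0$.

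\textbf{The main obstacle} is the converse, where $X_{K(T)}$ is non-reduced and $r<m$: here I must prove that the restricted system genuinely moves on $Y_{K(T)}$. The $r$-fold wedges of $\delta$ are the minor (Plücker) sections of the quotient $\Omega^1_{K(T)/K(S)}\otimes\mathcal O\twoheadrightarrow\Omega^1_{Y/X}|_{Y_{K(T)}}$, and they span a space of dimension $\ge 2$ exactly when the associated Gauss map from $Y_{K(T)}$ to a Grassmannian of $\Omega^1_{K(T)/K(S)}$ is non-constant, equivalently when the kernel of this quotient is not a subspace defined over $K(T)$. Thus everything comes down to excluding that this kernel descends to $K(T)$: were it to descend, the foliation $\mathcal F_{Y/X}$ would along the generic fibre be pulled back from a foliation on $T$, so that $X=Y/\mathcal F_{Y/X}$ would be a base change of the corresponding quotient of $T$ and $X_{K(T)}$ would be reduced, contradicting $r<m$. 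Making this dictionary between non-reducedness of $X_{K(T)}$ and non-descent of $\mathcal F_{Y/X}$ precise is the crux — it is also the heart of \cite{JW21}*{Theorem~1.1} — and I would establish it by the same generic-fibre computation as there, adapted to a general fibration $f$ and to $\Gamma$ globally generating only over $U$.
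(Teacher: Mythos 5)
Your treatment of part (1) and of the ``reduced $\Rightarrow M_h=0$'' half of part (2) is essentially the paper's own proof: the paper likewise studies the composite $\bigwedge^r\Gamma\otimes_k\mathcal O_{Y_U}\to g^*\bigwedge^r\Omega^1_{U/S}\to\det\Omega^1_{Y/X}\big|_{Y_U}$, identifies $\Supp F$ with its non-surjectivity locus in codimension one via the results of \S\ref{subsec:fol-bsch}, and in the reduced case observes that the image over the generic point is spanned by a single wedge. The gap is in the converse half of (2). You assert that $M_h=0$ is equivalent to the kernel of $\Omega^1_{K(T)/K(S)}\otimes\mathcal O_{Y_{K(T)}}\to\Omega^1_{Y/X}\big|_{Y_{K(T)}}$ being a subspace defined over $K(T)$. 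This equivalence is false: two sections of $\Gamma_Y$ cut out the same horizontal divisor as soon as their ratio lies in $H^0\bigl(Y_{K(T)},\mathcal O_{Y_{K(T)}}\bigr)$, i.e.\ in the algebraic closure $K(T')$ of $K(T)$ in $K(Y)$, and $K(T')$ can be strictly larger than $K(T)$ because $g\colon Y\to T$ need not be a fibration (the paper itself stresses this, which is why Stein factorizations appear in \S\ref{sec:nota-sett}). Your dictionary ``descent $\Rightarrow\mathcal F_{Y/X}$ pulled back $\Rightarrow X_{K(T)}$ reduced'' does work for descent over $K(T)$ — via Jacobson's correspondence, using crucially that $K(X)\cap K(T)=K(S)$ because $f$ is a fibration — but it collapses when the kernel is only defined over $K(T')$, since constants in $K(T')\setminus K(T)$ need not meet $K(X)$ inside $K(S)$.

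Moreover this is not a repairable technicality in your framework. Take $p=3$, $K(S)=k(a,b,c)$, $K(T)=K(S)(a^{1/3},b^{1/3})$ (so $m=2$), set $d:=b-ca$, and let $K(X):=K(S)(x,u,v)$ with $x$ transcendental, $u:=(c+x^3a)^{1/3}$, $v:=(d+x^3a^2)^{1/3}$; one checks that $K(S)$ is algebraically closed in $K(X)$ and that $a^{1/3}\notin K(X)$. Since $b^{1/3}=v+ua^{1/3}+xa^{2/3}$, one gets $K(Y)=K(X)(a^{1/3})$, hence $r=1<2=m$ and $X_{K(T)}$ is non-reduced; yet $\delta(db^{1/3})=(u+2xa^{1/3})\,\delta(da^{1/3})=c^{1/3}\,\delta(da^{1/3})$, and $c^{1/3}=u-xa^{1/3}$ lies in $K(T')$, so every ratio of sections in $\Gamma_Y$ is constant on the generic fibre and $M_h=0$. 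Thus ``non-descent over $K(T)$'', which is all your contradiction argument can deliver, does not imply that $\mathfrak M$ moves horizontally. The statement you defer to ``the same generic-fibre computation'' as \cite{JW21}*{Section~5} genuinely requires the hypothesis — automatic in \cite{JW21}, where one base changes by all of $k^{\oneoverp}$ — that no new constants appear, i.e.\ $K(T')=K(T)$. Your proposal does not secure this hypothesis for a general height-one $T\to S$, and (as the example shows) it is precisely the point where the argument cannot be waved through; the same caveat applies to the bare citation of \cite{JW21} in the paper's own proof.
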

		\begin{proof}
		Consider the following composition homomorphism
		\[
		\gamma : \bigwedge\nolimits^r \Gamma\otimes_k \mathcal{O}_{Y_U} \to
		g^*\bigwedge\nolimits^r\Omega_{U/S}^1 \to \Bigl(\bigwedge\nolimits^r\Omega_{Y/X}^1\Bigr)^{\vee\vee}\Big|_{Y_U}=\det \Omega_{Y/X}^1 \Big|_{Y_U}.
		\]
		Note that $\Supp F$ corresponds to the codimension one part of the locus where the above homomorphism is not surjective. More precisely, $\mathrm{Im}(\gamma) = \det \Omega_{Y_U/U}^1(-F)$ holds up to codimension one. Combining this with the result (i) in \S\ref{subsec:fol-bsch}, we conclude the assertion (1).
		
		Let us prove the assertion (2). If $X_{K(T)}$ is reduced, which is equivalent to that $\rank \Omega_{U/S}^1 = \rank \Omega_{Y/X}^1=r$,
		then $\mathrm{Im}(\gamma)_{K(T)}= K(T)(\alpha_1\wedge \cdots \wedge \alpha_r)$ for some $\alpha_1, \ldots, \alpha_r \in \Gamma$ generating $\Omega_{U/S}^1$ over the generic point of $U$, thus $M_{h}=0$.
		For the converse direction, assume that $X_{K(T)}$ is non-reduced. Then $m=\rank \Omega_{U/S}^1 >\rank \Omega_{Y/X}^1$. Remark that the argument of \cite[Section~5]{JW21} shows precisely the following statement
		\begin{itemize}
			\item
			if $e_1,\ldots, e_m$ are local basis of $\Omega_{U/S}^1$, then the sections like $\gamma(e_{i_1}\wedge \cdots \wedge e_{i_r})$ produce a nontrivial horizontal movable part of $\lvert \det\Omega_{Y_U/X}^1 \rvert$.
		\end{itemize}
		We conclude the proof after noticing that the sections like $\alpha_1\wedge \cdots \wedge \alpha_r$ for $\alpha_1, \ldots, \alpha_r$ in $\Gamma$ generate $\bigwedge^r\Omega_{U/S}^1$.
		\end{proof}

		\subsection{The behavior of the relative canonical divisors under base changes}
		The following result can be proved by use of duality theory similarly to \cite[Theorem~2.4]{CZ15}\footnote{The proof of \cite[Theorem~2.4]{CZ15} contains a minor mistake: the statement does not necessarily hold if the base is not smooth, which is mended in \cite[Proposition~2.4]{Zha19}.}.
		But here we give a proof by use of the language of foliation.
		
		\begin{prop}\label{prop:compds}
		Let $f\colon  X \rightarrow S$ be a fibration of normal varieties.
		Let $T$ be a normal variety and $\tau\colon T\to S $ a finite, purely inseparable morphism of height one.
		Assume that $X_{K(T)}$ is integral.
		Consider the commutative diagram
		$$\xymatrix{Y:=(X_T)^\nu  \ar[r]^>>>>\pi\ar[d]^g &X\ar[d]^{f}\\
			T\ar[r]^{\tau} &S\rlap. \\
		}$$
		Assume moreover that either $f$ is equidimensional or that $T,S$ are $\mathbb Q$-Gorenstein,
		then there exists an effective divisor $E$ and a $g$-exceptional $\mathbb Q$-divisor $N$ on $Y$ such that
		\begin{equation}\label{eq:KQSD}
			\pi^*K_{X/S} \simQ K_{Y/T} + (p-1)E +N.
		\end{equation}
		\end{prop}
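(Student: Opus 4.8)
The plan is to compare the canonical divisors of $X$ and $Y$ via the two height-one purely inseparable morphisms $\pi\colon Y\to X$ and $\tau\colon T\to S$, applying the canonical-divisor formula of Proposition~\ref{prop:can-foliation} to each and then tracking the relative version. First I would apply \eqref{eq:pullback-cano} to $\tau$, obtaining
\begin{equation*}
  \tau^*K_S \simQ K_T + (p-1)\det\Omega^1_{T/S},
\end{equation*}
and to $\pi$, obtaining $\pi^*K_X \simQ K_Y + (p-1)\det\Omega^1_{Y/X}$. Subtracting, and using $K_{X/S}=K_X - f^*K_S$ together with $\pi^*f^*K_S = g^*\tau^*K_S$, I would get
\begin{equation*}
  \pi^*K_{X/S} \simQ K_{Y/T} + (p-1)\bigl(\det\Omega^1_{Y/X} - g^*\det\Omega^1_{T/S}\bigr).
\end{equation*}
The problem therefore reduces to comparing $\det\Omega^1_{Y/X}$ with the pullback $g^*\det\Omega^1_{T/S}$.

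This comparison is exactly what the saturation analysis of \S\ref{subsec:fol-bsch} supplies. Dualizing, the injection $\gamma\colon \mathcal F_{Y/X}\hookrightarrow g^*\mathcal F_{T/S}$ together with the saturation $\widetilde{F_{Y/X}}$ gives, by point (i) of \S\ref{subsec:fol-bsch}, an effective divisor $E$ on $Y$ with $\det\mathcal F_{Y/X}=\det\widetilde{F_{Y/X}}(-E)$. Since the hypothesis that $X_{K(T)}$ is integral forces $\rank\mathcal F_{Y/X}=\rank g^*\mathcal F_{T/S}$ (part (ii) of \S\ref{subsec:fol-bsch}), the saturation $\widetilde{F_{Y/X}}$ and $g^*\mathcal F_{T/S}$ agree in codimension one away from the locus where $g^*\mathcal F_{T/S}$ fails to compute $g^*\det\Omega^1_{T/S}$; the discrepancy between $\det\widetilde{F_{Y/X}}$ and $g^*\det\mathcal F_{T/S}=-g^*\det\Omega^1_{T/S}$ is supported on $g$-exceptional divisors. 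Passing from $\mathcal F$ to $\Omega^1$ via $\det\mathcal F_{Y/X}=-\det\Omega^1_{Y/X}$ (and likewise on $T$), I would collect this codimension-one comparison into
\begin{equation*}
  \det\Omega^1_{Y/X} \simQ g^*\det\Omega^1_{T/S} + E + N',
\end{equation*}
with $E$ effective as above and $N'$ a $g$-exceptional $\mathbb Q$-divisor, whence the claimed \eqref{eq:KQSD} with $N=(p-1)N'$ follows.

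The two hypotheses alternative—that $f$ is equidimensional or that $T,S$ are $\mathbb Q$-Gorenstein—enter precisely to guarantee that the pullbacks $g^*K_T$, $g^*\det\Omega^1_{T/S}$ and $f^*K_S$ are well-defined as $\mathbb Q$-divisors, so that all the linear equivalences above make sense; I would handle the two cases by the pullback conventions laid out in the Conventions section (either $\tau,f$ are $\mathbb Q$-Cartier-friendly, or equidimensionality of $f$ makes Weil-divisor pullback legitimate). The main obstacle I anticipate is not the formal bookkeeping but verifying that the non-codimension-one discrepancy—the difference between $\det\widetilde{F_{Y/X}}$ and $g^*\det\mathcal F_{T/S}$, together with the contribution of the non-flat locus of $f$ recorded in \S\ref{subsec:fol-bsch}(i)—is genuinely $g$-exceptional and does not leak a horizontal effective term that would spoil the relative statement. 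Here I would lean on the integrality of $X_{K(T)}$ (so that $\pi$ is itself height-one purely inseparable and $Y$ is the normalized fiber product, giving a clean generic fiber) to confine the error to vertical and exceptional divisors, exactly as in the proof of \cite{CZ15}*{Theorem~2.4} as corrected in \cite{Zha19}*{Proposition~2.4}.
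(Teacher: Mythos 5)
Your proposal is correct and follows essentially the same route as the paper's proof: both apply Proposition~\ref{prop:can-foliation} to $\pi$ and to $\tau$, reduce the statement to comparing $\det\Omega^1_{Y/X}$ with $g^*\det\Omega^1_{T/S}$ over a big open subset of $S$ (using integrality of $X_{K(T)}$ to get an injection of sheaves of equal rank, hence an effective difference $E$), and absorb everything lying over the complement of that big open subset into a $g$-exceptional term $N$. The only cosmetic difference is that you run the comparison on the dual (foliation) side via $\gamma$ and saturation, whereas the paper uses the injection $\delta\colon g^*\Omega^1_{T^\circ/S^\circ}\hookrightarrow\Omega^1_{Y^\circ/X^\circ}$ directly; since the ranks agree, these are equivalent and produce the same effective divisor.
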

		\begin{proof}
		To prove the assertion, we may restrict ourselves to the regular locus of $X, Y$.
		So we may assume that $X, Y$ are both regular.
		Let $S^\circ$ be a big regular open subset such that $T^\circ := \tau^{-1}S^\circ$ is regular.
		Let $X^\circ = X_{S^\circ}$ and $Y^\circ=Y_{T^\circ}$.
		Since $X_{K(T)}$ is assumed to be integral, the natural homomorphism $\delta\colon  g^*\Omega_{T^\circ/S^\circ}^1 \to \Omega_{Y^\circ/X^\circ}^1$ is injective and has the same rank, hence induces an injective homomorphism $\det(\delta)\colon  g^*\det \Omega_{T^\circ/S^\circ}^1 \to \det \Omega_{Y^\circ/X^\circ}^1$.
		Therefore we may identify $g^*\det \Omega_{T^\circ/S^\circ}^1 \cong\det \Omega_{Y^\circ/X^\circ}^1(-E_0)$ for some effective divisor $E_0$ on $Y^\circ$.
		Applying Proposition~\ref{prop:can-foliation}, we have
		\begin{align*}
			\pi^*K_{X^\circ} &\sim K_{Y^\circ} + (p-1)\det \Omega_{Y^\circ/X^\circ}^1 \sim K_{Y^\circ} + (p-1)g^*\det \Omega_{T^\circ/S^\circ}^1 + (p-1)E_0 \\
			&\sim K_{Y^\circ} + g^*(\tau^*K_{S^\circ} - K_{T^\circ}) + (p-1)E_0,
		\end{align*}
		which gives
		$$\pi^*K_{X^\circ/S^\circ} \sim K_{Y^\circ/T^\circ} + (p-1)E_0.$$
		Let $E$ be the closure of $E_0$ on $Y$. We may extend the above relation to the whole variety $Y$ up to some $g$-exceptional $\mathbb Q$-divisor $N$, which is the relation (\ref{eq:KQSD}) as desired.
		\end{proof}
		
		\begin{rmk}\label{rmk:E}
		It is worth mentioning that the restriction of $E$ on the generic fiber $Y_{K(T)}$ of $g$ coincides with the conductor of the normalization 
		$\nu\colon Y\to X_T$.
		In particular, if $X_{K(T)}$ is normal, then $E$ is $g$-vertical.
		\end{rmk}

		\section{Curves of Arithmetic genus one}\label{sec:g=1}
		This section focuses on regular but non-smooth projective curves of arithmetic genus one defined over an imperfect field.
		We will look closely at the behavior of the non-smooth locus under height-one base changes.
		
		\subsection{Auxiliary results}
		First, we may deduce the following result from \cite[Subsection~3.2.2, Corollary~2.14 and Proposition~2.15]{Liu02}.
		\begin{prop}\label{prop:basic-fib}
		Let $f\colon  X \to S$ be a fibration of normal varieties over a field $k$ of characteristic $p>0$. Then $(X_{\overline{K(S)}})_{\mathrm{red}}$ is integral, and the fibration $f$ is separable if and only if the geometric generic fiber $X_{\overline{K(S)}}$ is reduced.
		\end{prop}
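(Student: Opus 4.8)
The plan is to push both assertions down to the generic fiber $X_{K(S)}$ and then to invoke the two standard criteria---the cited results of Liu---characterizing geometric irreducibility and geometric reducedness of a variety in terms of its function field. Throughout write $K:=K(S)$ and fix an algebraic closure $\overline K$, so that $X_{\overline{K(S)}}=X_K\times_K\overline K$.

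First I would record the structure of the generic fiber. By the definition of a fibration we have $f_*\mathcal O_X=\mathcal O_S$, hence $K$ is algebraically closed in $K(X)$; moreover $X_K=X\times_S\Spec K$ is the localization of the integral normal variety $X$ at the generic point of $S$, so it is an integral (indeed normal) variety over $K$ with function field $K(X)$. With this in hand the first assertion is immediate: an integral variety is geometrically irreducible exactly when the ground field is separably algebraically closed in its function field, and since $K$ is even algebraically closed in $K(X)$, the fiber $X_K$ is geometrically irreducible. Consequently $X_{\overline K}$ is irreducible, so its reduction $(X_{\overline{K(S)}})_{\mathrm{red}}$ is integral.

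For the equivalence I would use the companion criterion that an integral variety over $K$ is geometrically reduced if and only if its function field is separable over $K$. By our conventions $f$ is separable precisely when $K(X)/K$ is separable, while $X_{\overline{K(S)}}$ is reduced precisely when $X_K$ is geometrically reduced over $K$; the criterion matches these two conditions. The only step with genuine content is the implication ``$K(X)/K$ separable $\Rightarrow X_{\overline{K(S)}}$ reduced'', which must propagate separability of a single function field to reducedness of the \emph{entire} geometric fiber (the converse needs only reducedness at the generic point). I do not expect a real obstacle here: affine-locally $X_K=\Spec A$ with $A$ a domain of fraction field $K(X)$, and flat base change turns the inclusion $A\hookrightarrow K(X)$ into $A\otimes_K\overline K\hookrightarrow K(X)\otimes_K\overline K$, whose target is reduced because $K(X)/K$ is separable; a subring of a reduced ring is reduced, so $X_{\overline{K(S)}}$ is reduced. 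In short, this is essentially a translation statement whose mathematical weight is carried entirely by the cited results of Liu, which is why it is recorded as a basic fact.
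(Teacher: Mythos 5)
Your proof is correct and takes essentially the same route as the paper: the paper gives no written argument at all, simply deducing the statement from the same two criteria in \cite{LiuAGAC}*{Subsection~3.2.2, Corollary~2.14 and Proposition~2.15} (geometric irreducibility via the base field being separably closed in the function field, and geometric reducedness via separability of the function field) that you invoke. Your added details---that $K(S)$ is algebraically closed in $K(X)$ by the fibration hypothesis, and the flat base change/localization argument passing between the generic fiber and the geometric generic fiber---are exactly the routine verifications the paper leaves implicit.
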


		We now consider a curve $X$ over a field $K$.
		To be precise, by a \emph{curve} over $K$ we mean a purely one-dimensional quasi-projective scheme over $K$.
		Let $D =\sum_i a_i \mathfrak p_i$ be a Cartier divisor on $X$.
		The {\em degree} of $D$ is defined to be the integer \[
		\deg_K D= \sum_i a_i [\kappa(\mathfrak p_i):K].
		\]
		where $\kappa(\mathfrak p_i)$ denotes the residue field of $\mathfrak p_i$.

		We will need the following classification of curves of arithmetic genus zero.
		\begin{prop}[{\cite[Theorem~9.10]{Tanaka21}}]\label{prop:ga0}
		Let $X$ be a normal projective integral $K$-curve with
		$H^0(X, \mathcal O_X)=K$ and $H^1(X, \mathcal O_X)=0$.  Then the following statements hold.
		\begin{enumerate}[\rm(1)]
			\item $\deg_K K_X = -2$.
			\item $X$ is isomorphic to a conic in $\mathbb{P}^2_K$ and $X \cong \mathbb{P}^1_K$ if and only if it has a $K$-rational point.
			\item Either $X$ is a smooth conic or $X$ is geometrically non-reduced.
			In the latter case, we have $\Char K =2$, and $X$ is isomorphic to the curve defined by a quadric $sx^2 + ty^2 + z^2 = 0$ for some $s,t\in K\setminus K^2$.
		\end{enumerate}
		\end{prop}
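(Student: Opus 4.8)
The plan is to extract everything from Riemann--Roch and Serre duality on the regular projective curve $X$, then use the anticanonical system to realize $X$ as a plane conic, and finally analyze the defining quadratic form to obtain the trichotomy in part~(3). Throughout I use that a normal curve is regular, hence Gorenstein, so that $\omega_X=\mathcal O_X(K_X)$ is a line bundle and Serre duality $h^i(L)=h^{1-i}(\omega_X\otimes L^{-1})$ is available, together with Riemann--Roch $\chi(L)=\deg_K L+\chi(\mathcal O_X)$ for a line bundle $L$ on the projective $K$-curve $X$. For part~(1), the hypotheses give $\chi(\mathcal O_X)=h^0-h^1=1$; applying Riemann--Roch to $L=\omega_X$ and combining with $\chi(\omega_X)=-\chi(\mathcal O_X)$ (Serre duality) yields $\deg_K K_X=-2\chi(\mathcal O_X)=-2$.

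For the conic embedding in part~(2), since $\deg_K(-K_X)=2$, Riemann--Roch gives $\chi(\omega_X^{-1})=3$, and the vanishing $h^1(\omega_X^{-1})=h^0(\omega_X^{\otimes 2})=0$ (as $\omega_X^{\otimes 2}$ has negative degree on an integral curve) yields $h^0(\omega_X^{-1})=3$. I would then show $\omega_X^{-1}$ is very ample by the length-two criterion: for every length-two subscheme $Z\subseteq X$ the sequence $0\to\omega_X^{-1}(-Z)\to\omega_X^{-1}\to\omega_X^{-1}|_Z\to 0$ gives surjectivity on global sections once $h^1(\omega_X^{-1}(-Z))=h^0(\omega_X^{\otimes 2}(Z))=0$, and the latter holds since $\deg_K\omega_X^{\otimes 2}(Z)=-2<0$. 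Thus $|{-K_X}|$ embeds $X$ into $\mathbb P^2_K$ as a curve of degree $\deg_K(-K_X)=2$, a conic. For the rational-point criterion, one direction is clear; conversely, given a $K$-rational point $P$ (so $\deg_K P=1$), Riemann--Roch and $h^1(\mathcal O_X(P))=h^0(\omega_X(-P))=0$ give $h^0(\mathcal O_X(P))=2$. I would show $|P|$ is base-point free: the tautological section cuts out exactly $P$, while the sequence $0\to\mathcal O_X\to\mathcal O_X(P)\to\mathcal O_X(P)|_P\to 0$ together with $h^0(\mathcal O_X)=1<2$ produces a section nonvanishing at $P$; these two sections have no common zero. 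Hence $|P|$ defines a morphism $X\to\mathbb P^1_K$ of degree $\deg_K\mathcal O_X(P)=1$, a birational morphism of regular projective curves, thus an isomorphism $X\cong\mathbb P^1_K$.

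Part~(3) is the crux. Writing $X=V(Q)\subseteq\mathbb P^2_K$ for a quadratic form $Q$, I would first dispose of $\Char K\neq 2$: there $Q$ diagonalizes, and an integral conic is forced to be nondegenerate, since a degenerate form either factors (contradicting integrality) or produces a non-normal node at a $K$-rational point (contradicting regularity); hence $X$ is smooth and in particular geometrically reduced, so geometric non-reducedness forces $\Char K=2$. In characteristic two I would use the polar bilinear form $B(u,v)=Q(u+v)+Q(u)+Q(v)$, which records exactly the cross terms of $Q$: the base change $X_{\overline K}$ is non-reduced precisely when $B=0$, i.e.\ $Q=sx^2+ty^2+rz^2$ has no cross terms, because then $Q=(\sqrt{s}\,x+\sqrt{t}\,y+\sqrt{r}\,z)^2$ over $\overline K$ and $X_{\overline K}$ is a double line. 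Rescaling one coefficient to $1$ gives $Q=sx^2+ty^2+z^2$; integrality rules out $Q$ being a $K$-square, and the regularity of $X$ imposes the arithmetic condition on $s,t$, yielding the stated normal form.

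The main obstacle I expect is precisely this last step of part~(3). Over an imperfect field regular is strictly weaker than smooth, so the Jacobian criterion is useless for $V(sx^2+ty^2+z^2)$ (all partial derivatives vanish in characteristic two), and one must instead inspect the local rings at the closed points of $X$ directly. A short computation shows that if $s$, $t$, or $st$ lies in $K^2$, then after a linear change of coordinates $Q$ loses one variable and $X$ acquires a non-regular point; so the genuinely delicate content is matching the regularity of $X$ with the $K^2$-independence of $\{1,s,t\}$, which underlies the condition $s,t\in K\setminus K^2$ in the statement.
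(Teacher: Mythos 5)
The paper offers no proof of this proposition at all --- it is quoted verbatim from \cite{Tanaka21I}*{Theorem~9.10} --- so your argument has to stand on its own. Part (1), the rational-point criterion in (2), and your outline of (3) (polar form in characteristic $2$, geometric non-reducedness $\Leftrightarrow$ absence of cross terms, regularity forcing $s,t\notin K^2$) are sound. The genuine gap is the very-ampleness step in (2). The ``length-two criterion'' you invoke --- $L$ is very ample provided $H^0(L)\to H^0(L|_Z)$ is surjective for every length-two closed subscheme $Z\subseteq X$ --- is a theorem over an \emph{algebraically closed} field, and that is exactly the hypothesis you cannot afford here. You cannot reduce to $\overline K$ by flat base change: $X_{\overline K}$ may be non-reduced (this is the whole point of the proposition), so your Serre-duality/negative-degree vanishing on the integral curve $X$ says nothing about length-two subschemes of $X_{\overline K}$, where line bundles of negative total degree can well have sections. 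And over $K$ itself the criterion simply does not imply that $\phi_{|-K_X|}$ is a closed immersion: length-two subschemes of $X$ are blind to what happens at closed points of residue degree $\ge 2$. Concretely, if $x_1\neq x_2$ are closed points with $[\kappa(x_i):K]=2$ both mapping to the same degree-two point $y\in\mathbb P^2_K$, then the sections vanishing at $x_i$ are exactly the pullbacks of the unique line through $y$, a one-dimensional subspace of the three-dimensional $H^0(-K_X)$; hence $H^0(-K_X)\to H^0(-K_X|_{\{x_i\}})$ is surjective for both $i$, every length-$\le 2$ condition holds, and yet $\phi$ is not injective. Points with nontrivial residue extensions are unavoidable in this setting: by the paper's Proposition~\ref{prop:ga1-geo-reduced-p=2}, the non-smooth point of such a curve has residue degree up to $p^2$ over $K$.

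The good news is that your cohomological computations already suffice for a correct and simpler argument, bypassing any pointwise embedding criterion. You have $h^0(\omega_X^{-1})=3$, and the degree bounds show $|-K_X|$ is base-point free (if every section vanished at a closed point $x$, then $h^0(\omega_X^{-1}(-x))=3$, impossible since this bundle has degree $2-\deg_K x$ and $X$ is integral). So $\phi\colon X\to\mathbb P^2_K$ is a finite morphism; its image $C$ (scheme-theoretic, hence an integral curve) spans $\mathbb P^2_K$, because a hyperplane containing $C$ would be a linear relation among the basis sections. Thus $C$ is not a line, and from $2=\deg_K\omega_X^{-1}=\deg(\phi)\cdot\deg C$ you get $\deg C=2$ and $\deg(\phi)=1$: $\phi$ is finite and birational onto an integral conic $C=V(Q)$. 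Finally, the exact sequence $0\to\mathcal O_C\to\phi_*\mathcal O_X\to Q'\to0$ with $Q'$ of finite length, together with $\chi(\mathcal O_X)=1=\chi(\mathcal O_C)$ (the latter from $0\to\mathcal O_{\mathbb P^2}(-2)\to\mathcal O_{\mathbb P^2}\to\mathcal O_C\to0$), forces $Q'=0$, so $\phi$ is an isomorphism onto $C$. With (2) repaired this way, the rest of your plan goes through.
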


		\subsection{Notation and assumptions}\label{sec:curve-notation}
		From now on to the end of this section, we work over a field $K$ of characteristic $p>0$ such that $[K:K^p] < \infty$.
		Denote by $\overline K$ the algebraic closure of $K$.
		Let $X$ be a regular projective curve over $K$ with $H^0(X,\mathcal O_{X}) = K$.
		Assume that $X$ has arithmetic genus one,  i.e., $h^1(X,\mathcal O_X) = 1$.
		Assume further that $X$ is not smooth over $K$.
		Then there exists an intermediate field $L$ with $K\subset L \subseteq K^{1/p}$ such that $X_L := X\otimes_K L$ is integral but not regular (\cite[Proposition~1.5]{Sch10}).
		We fix such an $L$.
		Note that if $L/K$ is inseparable of degree $p$, then $X_L$ is always integral.
		If $Y$ denotes the normalization of $X_L$ and $\pi\colon Y\to X$ the induced morphism, then $K':=H^0(Y,\mathcal O_Y)\subseteq K^{1/p}$.
		We have the following commutative diagram:
		\begin{equation}\label{eq:4A2P}
		\vcenter{\xymatrix{&X^{\oneoverp} \ar[d] \ar[rrd]^{F_X} & & \\
				&Y:=(X_L)^\nu \ar[d]\ar[r] &X_L\ar[r]\ar[d] &X\ar[d]\\
				&\Spec K' \ar[r]
				&\Spec L\ar[r] &\Spec K. \\
				}}\end{equation}
				Remark that if $X$ is geometrically reduced, then $K' = L$.
				For this, we adapt the proof of \cite[Theorem~3.1 (2)]{Tanaka21} to our situation. 
				Since there is an injective ring homomorphism $K' \hookrightarrow K(X_L)$ and the scheme $X_{K'}$ is integral, the induced homomorphism $K' \otimes_L K' \hookrightarrow K(X_L)\otimes_L K' = K(X_{K'})$ is injective.
				Then $K' \otimes_L K'$ is reduced, which implies $K' = L$ because $L \subset K'$ is purely inseparable.
				
				Recall that \begin{equation}
		\label{eq:8Q12}
		\pi^*K_X \sim K_Y + (p-1) C,
		\end{equation}
		where $C>0$ is supported on the inverse image of the non-normal point of $X_L$ and $(p-1) C$ coincides with the usual conductor divisor
		(\cite[Theorem~1.2]{PW22}).
		
		\begin{prop}\label{prop:basic}
		With the setting above,
		\begin{enumerate}[\rm(1)]
			\item the characteristic $p$ equals $2$ or $3$, and the normalization of $X_{\overline{K},\mathrm{red}}$ is isomorphic to $\mathbb{P}^1_{\overline{K}}$;
			\item if $X$ is geometrically reduced, then $X$ is geometrically integral, and there exists a unique singular point on $X_{\overline{K}}$;
			\item if $X$ is geometrically non-reduced, then either $X_{\overline{K},\mathrm{red}}= \mathbb P^1_{\overline{K}}$ or $X_{\overline{K},\mathrm{red}}$ has a unique singular point.
		\end{enumerate}
		\end{prop}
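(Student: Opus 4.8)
The plan is to read all three assertions off the conductor formula \eqref{eq:8Q12} together with the classification of genus-zero curves in Proposition~\ref{prop:ga0} applied to the normalization $Y$. Throughout I write $p_a(\cdot)=h^1(\cdot,\mathcal O)$ for arithmetic genus, and I compute degrees of divisors over the ground field $K$, using the relation $\deg_K=[K':K]\deg_{K'}$ on the $K'$-curve $Y$ and the fact that $[K':K]$ is a power of $p$ (because $K\subseteq K'\subseteq K^{1/p}$ and $[K:K^p]<\infty$).

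First I would pin down $p$ and the genus of $Y$. Since $X$ is regular, hence Gorenstein, and has arithmetic genus one, $\deg_K K_X=2p_a(X)-2=0$, so that $\deg_K\pi^*K_X=[L:K]\deg_K K_X=0$. Taking $\deg_K$ in \eqref{eq:8Q12} gives
\[
0=\deg_K\pi^*K_X=\deg_K K_Y+(p-1)\deg_K C,
\]
whence $\deg_K K_Y=-(p-1)\deg_K C<0$ because $C>0$. As $\deg_K K_Y=[K':K]\bigl(2p_a(Y)-2\bigr)$, this forces $p_a(Y)=0$, and then $(p-1)\deg_K C=2[K':K]$. Since $\deg_K C$ is a positive integer and $[K':K]$ is a power of $p$, we obtain $(p-1)\mid 2$, i.e. $p\in\{2,3\}$.

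Next I would identify the geometric normalization. Now $Y$ is a normal projective $K'$-curve with $H^0(Y,\mathcal O_Y)=K'$ and $H^1(Y,\mathcal O_Y)=0$, so Proposition~\ref{prop:ga0} applies: either $Y$ is a smooth conic, and then $Y_{\overline K}\cong\mathbb P^1_{\overline K}$, or $Y$ is geometrically non-reduced with $p=2$ and $Y\cong\{sx^2+ty^2+z^2=0\}$, in which case over $\overline K$ the equation becomes a perfect square and $Y_{\overline K,\mathrm{red}}$ is a line. In both cases the normalization of $Y_{\overline K,\mathrm{red}}$ is $\mathbb P^1_{\overline K}$. To transfer this to $X$, I would use that $Y\to X_L$ is finite and birational, so after base change to $\overline K$ and passage to reductions the two curves share the same normalization; moreover $K'/L$ is purely inseparable, so $K'\otimes_L\overline K$ is Artinian local with residue field $\overline K$, giving $(Y\otimes_L\overline K)_{\mathrm{red}}=(Y\otimes_{K'}\overline K)_{\mathrm{red}}=Y_{\overline K,\mathrm{red}}$. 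Hence the normalization of $X_{\overline K,\mathrm{red}}$ is $\mathbb P^1_{\overline K}$, which proves (1).

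Finally I would treat (2) and (3) by a $\delta$-invariant count on $Z:=X_{\overline K,\mathrm{red}}$, which is integral by Proposition~\ref{prop:basic-fib} and has normalization $\mathbb P^1_{\overline K}$ of genus $0$, so that $p_a(Z)=\sum_x\delta_x$ is a sum of positive integers over the singular points of $Z$. If $X$ is geometrically reduced then $Z=X_{\overline K}$ is integral with $p_a(Z)=p_a(X_{\overline K})=1$ (flat base change preserves $\chi(\mathcal O)$, and $H^0=\overline K$), so the count yields a unique singular point, which is (2). If $X$ is geometrically non-reduced, I would bound $p_a(Z)$ from the exact sequence $0\to\mathcal N\to\mathcal O_{X_{\overline K}}\to\mathcal O_Z\to0$ defining the reduction, where $\mathcal N$ is the nilradical: as $H^0(X_{\overline K},\mathcal O)=\overline K$ is a field, $H^0(\mathcal N)=0$ and $H^0(\mathcal O_{X_{\overline K}})\xrightarrow{\sim}H^0(\mathcal O_Z)$, so the long exact sequence gives $0\to H^1(\mathcal N)\to H^1(\mathcal O_{X_{\overline K}})\to H^1(\mathcal O_Z)\to0$ and hence $p_a(Z)=h^1(\mathcal O_Z)\le h^1(\mathcal O_{X_{\overline K}})=1$. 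Thus either $Z$ is smooth, i.e. $Z=\mathbb P^1_{\overline K}$, or $Z$ is singular with $p_a(Z)=1$ and therefore a unique singular point, proving (3). The only genuinely delicate points are keeping the degree computation consistent across the tower $K\subseteq K'$ (the factor $[K':K]$ and its being a power of $p$) and the base-change/reduction identification of normalizations in the transfer step; once these are settled, the cohomological bound $p_a(Z)\le1$ makes the singular-point counts immediate.
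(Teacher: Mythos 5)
Your proof is correct and takes essentially the same route as the paper's: part (1) via the conductor formula (\ref{eq:8Q12}) combined with Proposition~\ref{prop:ga0} applied to $Y$, and parts (2)--(3) via integrality of $X_{\overline K,\mathrm{red}}$ (Proposition~\ref{prop:basic-fib}) together with the bound $p_a(X_{\overline K,\mathrm{red}})\le 1$ coming from the nilradical exact sequence. The only differences are expository: you compute degrees over $K$ instead of $K'$ (using coprimality of $p-1$ and $[K':K]$), and you make explicit the transfer of the normalization from $Y$ to $X_{\overline K,\mathrm{red}}$ and the $\delta$-invariant count, which the paper leaves implicit.
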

		\begin{proof}
		(1) 
		By (\ref{eq:8Q12}) we have $K_Y \sim -(p-1) C < 0$. By Proposition~\ref{prop:ga0}, we see that $Y$ is a conic (or just $\mathbb P^1$) and $\deg_{K'} K_Y = -2$.
		We conclude that $p=2$ or $3$ and that $(X_{\overline K})_{\rm red}^\nu \cong \mathbb P^1_{\overline K}$.
		
		By Proposition~\ref{prop:basic-fib}, $X_{\overline K,\rm red}$ is integral. To show (2) and (3), it suffices to show that $X_{\overline K,\rm red}$ has at most one singular point, or more strongly, $p_a(X_{\overline K,\rm red}) \le 1$.
		
		If $X$ is geometrically reduced, then $p_a(X_{\overline K}) = h^1(X_{\overline K},\mathcal O_{X_{\overline K}}) = 1$. 
		If $X$ is geometrically non-reduced, and if we denote by $\mathcal N$ the nilradical ideal sheaf of $X_{\overline K,\rm red}$, then by the exact sequence 
		$0\to \mathcal N \to \mathcal O_{X_{\overline K}} \to \mathcal O_{X_{\overline K,\rm red}} \to 0$,
		we see that $p_a(X_{\overline{K},\mathrm{red}}) \le 1$.
		\end{proof}
		
		In the following we consider the behavior of pulling back divisors.
		Assume for now that $K\subset L$ is a finite (not necessarily purely inseparable) field extension.
		Let $\mathfrak p$ be a closed point on $X$, and let $\mathfrak q_1,\ldots,\mathfrak q_r$ be the points on $Y$ lying over $\mathfrak p$.
		We write \[
		\pi^*\mathfrak p = e_1\mathfrak q_1 +\cdots+ e_r \mathfrak q_r,
		\]
		where $e_i$ are the ramification index.
		Let $f_i := [\kappa(\mathfrak q_i):\kappa(\mathfrak p)]$ denote the residue class degree.
		It is well known that (cf.\ \cite[Proposition~7.1.38]{Liu02})
		\begin{equation}\label{eq:1BJT}
		[L:K] = \sum_i e_i f_i.
		\end{equation}
		
		Back to our initial setting, where $K\subset L$ is purely inseparable of height one, we have that $Y\to X$ is homeomorphic, and thus \[
		\pi^*\mathfrak p = p^\gamma \mathfrak q,
		\]
		for some integer $\gamma$.
		Since $F_X^* \mathfrak p = p\mathfrak p$, by the diagram (\ref{eq:4A2P}) we see that $\gamma = 0$ or $1$.

		\begin{examp}\label{exam:conic-pull-back}
		Let $X/K$ be the non-smooth regular curve defined by the affine equation $s x^2 + t y^2 + 1 = 0$,
		where $\mathop{\rm char} K = 2$ and $s,t\in K\setminus K^2$.
		Let $\mathfrak p\in X$ be the prime ideal $(y)$.
		Then $\kappa(\mathfrak p) = K(s^{1/2}) =: L$.
		Now $X\times_K L = \Spec L[x,y]/((s^{1/2}x + 1)^2 + ty^2)$ is an integral curve.
		Let $\pi\colon Y \to X\times_K L$ be the normalization morphism:
		\[\tabskip=5pt\def\mapsby{\gets\joinrel\mapstochar}
		\vbox{\halign{\hfil$#$&$#$&$#$\cr
				Y \cong \Spec K(s^{1/2},t^{1/2})[y] &\to&     X\times_KL\cr
				y                                   &\mapsby& y\cr
				(t^{1/2} y + 1)/ s^{1/2}            &\mapsby& x\cr
		}}
		\]
		We have $H^0(Y,\mathcal O_{Y}) = K(s^{1/2}, t^{1/2}) =: K'$.
		If we denote by $\mathfrak q\in Y$ the prime ideal $(y)\subset K'[y]$,
		then $\mathfrak q$ is a $K'$-rational point and $\pi^* \mathfrak p = \mathfrak q$,
		as predicted by (\ref{eq:1BJT}).
		\end{examp}
		
		\subsection{The case when $X$ is geometrically reduced.}\label{geom-integral}
		\begin{prop}\label{prop:ga1-geo-reduced-p=2}
		With the notation and assumptions as in subsection~\ref{sec:curve-notation}, assume that $X$ is geometrically reduced, then:
		\begin{enumerate}[\rm(1)]
			\item
			The non-smooth locus of $X$ is supported at a closed point $\mathfrak p$ and
			$\kappa(\mathfrak p)/K$ is purely inseparable of height one with $[\kappa(\mathfrak p):K]\le p^2$.
			Let $\mathfrak q \in Y$ be the unique point lying over $\mathfrak p$.
			\item
			If $p=3$, then $Y\cong\mathbb{P}^1_L$, $\pi^*\mathfrak p=3\mathfrak q$, $\mathfrak q$ is an $L$-rational point, and $[\kappa(\mathfrak p):K]=3$.
			\item Assume $p=2$.
			\begin{enumerate}[\rm(a),leftmargin=1.5em]
				\item If the point $\mathfrak q$ is $L$-rational, then $Y\cong \mathbb{P}^1_L$, $\pi^*\mathfrak p=2\mathfrak q$ and $[\kappa(\mathfrak p):K]=2$.
				\item If the point $\mathfrak q$ is not $L$-rational, then $\deg_L \mathfrak q=2$, and
				\[
				\pi^*\mathfrak p=
				\begin{cases}
					\mathfrak q,  &\hbox{ if } \deg_K(\mathfrak p) = 2;\\
					2\mathfrak q, &\hbox{ if } \deg_K(\mathfrak p) = 4.
				\end{cases}\]
			\end{enumerate}
		\end{enumerate}
		\end{prop}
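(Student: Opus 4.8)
The plan is to reduce the whole statement to degree and length computations on the conic $Y$ via the conductor relation (\ref{eq:8Q12}), and then to read off the finer ramification and residue-field data from the local structure of the unique singular point. I would begin by collecting the global inputs. By Proposition~\ref{prop:basic}, $X$ is geometrically integral, $p\in\{2,3\}$, the geometric fibre $X_{\overline K}$ has a single singular point, and $Y$ is a conic over $K'=H^0(Y,\mathcal O_Y)$ with $\deg_{K'}K_Y=-2$ by Proposition~\ref{prop:ga0}. Since $p_a(X)=1$ I have $\deg_KK_X=2p_a(X)-2=0$; in fact $\omega_X$ has degree $0$ and $h^0(X,\omega_X)=h^1(X,\mathcal O_X)=1$, so a nonzero section has empty divisor and $K_X\sim0$, whence $\pi^*K_X\sim0$. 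Writing the conductor in (\ref{eq:8Q12}) as $(p-1)C$ with $C=c\,\mathfrak q$ supported at the unique point $\mathfrak q$ over the non-normal point of $X_L$, this reduces (\ref{eq:8Q12}) to $K_Y\sim-(p-1)C$ on $Y$.

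For part (1), I would first prove that the non-smooth locus is a single closed point $\mathfrak p$ with $\kappa(\mathfrak p)/K$ purely inseparable. A regular $K$-curve is smooth exactly at the points whose residue field is separable over $K$, and its non-smooth points are the images of the singular points of $X_{\overline K}$; as the latter is unique, so is $\mathfrak p$. If $\kappa(\mathfrak p)/K$ had a nontrivial separable subextension, then $\mathfrak p$ would split into several geometric points, each non-smooth, contradicting the uniqueness of the geometric singularity. The remaining assertions of (1), that $\kappa(\mathfrak p)/K$ is of height one and that $[\kappa(\mathfrak p):K]\le p^2$, I would establish together with the ramification data in the final step.

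The numerical backbone is a single identity. Using the fundamental identity (\ref{eq:1BJT}) in the form $\deg_K\pi^*(-)=[L:K]\deg_K(-)$, together with $\deg_K=[K':K]\deg_{K'}$ on the $K'$-curve $Y$, I would apply $\deg_K$ to $K_Y\sim-(p-1)C$ and use $\deg_KK_X=0$ and $\deg_{K'}K_Y=-2$ to obtain
\[(p-1)\,c\,\deg_{K'}(\mathfrak q)=2.\]
For $p=3$ this forces $c=1$ and $\deg_{K'}\mathfrak q=1$, so $\mathfrak q$ is rational over $K'$; a characteristic-$3$ conic is smooth by Proposition~\ref{prop:ga0}, hence $Y\cong\mathbb P^1_{K'}$, which is case (2). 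For $p=2$ the identity reads $c\,\deg_{K'}\mathfrak q=2$, giving either $\deg_{K'}\mathfrak q=1$ (so $Y\cong\mathbb P^1_{K'}$, case (3a)) or $\deg_{K'}\mathfrak q=2$ (case (3b)).

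It remains to pin down $\pi^*\mathfrak p=p^\gamma\mathfrak q$ (with $\gamma\in\{0,1\}$) and $[\kappa(\mathfrak p):K]$, to identify $K'=L$, and to finish (1); this is where I expect the main difficulty to lie, since the global identity leaves $\gamma$, $[\kappa(\mathfrak p):K]$ and $[K':L]$ undetermined. The determination must come from the local geometry of the singular point, whose $\delta$-invariant is $1$ and whose completed local ring is the cusp with semigroup $\langle2,3\rangle$: a uniformizer of the DVR $\mathcal O_{X,\mathfrak p}$ pulls back along $\pi$ with a multiplicity dictated by this semigroup and by the inseparable residue extension, while $[L:K]=p^\gamma[\kappa(\mathfrak q):\kappa(\mathfrak p)]$ and the factorization of the Frobenius through $Y$ in (\ref{eq:4A2P}) constrain the residue degrees. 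Since the cusp has embedding dimension two and its defining data are $p$-th roots of elements of $K$, the field $\kappa(\mathfrak p)$ lies in $K^{1/p}$ and $[\kappa(\mathfrak p):K]\le p^2$, finishing (1). The cleanest way to do the bookkeeping is to reduce $X$ to an explicit normal form: in characteristic $3$ the curve $v^2=u^3+s$ with $s\in K\setminus K^3$, where $v$ is a local uniformizer at the regular point $\mathfrak p$, $\kappa(\mathfrak p)=K(s^{1/3})$, and normalizing the geometric cusp $v^2=w^3$ gives $\pi^*\mathfrak p=3\mathfrak q$; and analogously in characteristic $2$, where the two sub-cases $\deg_K\mathfrak p\in\{2,4\}$ correspond to $\gamma=0$ and $\gamma=1$. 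Finally, the equality $K'=L$ (so that $\deg_L\mathfrak q=2$ in case (3b) and $\mathfrak q$ is $L$-rational in the rational cases) follows from the same local analysis, since normalizing the branch introduces no further extension of the constant field; in the rational cases it is also seen directly from $L\subseteq K'\subseteq\kappa(\mathfrak q)=L$, whence $Y\cong\mathbb P^1_L$.
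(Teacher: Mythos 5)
Your numerical skeleton is sound: $K_X\sim 0$, the conductor relation (\ref{eq:8Q12}), and the resulting identity $(p-1)\,c\,\deg_{K'}\mathfrak q=2$ do recover the structure of $Y$ and the dichotomy between (3a) and (3b), and this matches the degree computation the paper itself relies on. But there is a genuine gap exactly where you admit "the main difficulty lies": the height-one and degree-$\le p^2$ claims of (1), and the determination of $\gamma$ and $[\kappa(\mathfrak p):K]$ in (2)--(3), are never actually proved. You defer them to "reducing $X$ to an explicit normal form", but the existence of such normal forms (e.g.\ $v^2=u^3+s$ in characteristic $3$) is precisely the classification of non-conservative genus-one function fields, i.e.\ the content of Queen's Lemma~1 and Theorem~2 that the paper cites for part (1); the assertion that "the cusp's defining data are $p$-th roots of elements of $K$" presupposes that normal form and is therefore circular. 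The paper avoids explicit equations entirely in (2) and (3): granting (1), it writes $\pi^*\mathfrak p=p^\gamma\mathfrak q$, applies (\ref{eq:1BJT}) in the form $[L:K]=p^\gamma[\kappa(\mathfrak q):\kappa(\mathfrak p)]$, and then rules out $\gamma=0$ in cases (2) and (3a) by the fact that a non-smooth point of a regular curve cannot be $K$-rational (\cite{Tanaka21I}*{Proposition~2.13}). Your proposal lists the identity $[L:K]=p^\gamma[\kappa(\mathfrak q):\kappa(\mathfrak p)]$ as a "constraint" but never invokes any such non-rationality statement, so the case $\pi^*\mathfrak p=\mathfrak q$ with $\kappa(\mathfrak p)=K$ is not excluded by anything you wrote.

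A secondary inaccuracy: your opening criterion "a regular $K$-curve is smooth exactly at the points whose residue field is separable over $K$" is false --- the point $(t^p-a)\subset\mathbb A^1_K$ with $a\in K\setminus K^p$ is smooth yet has inseparable residue field. What is true, and what you actually need, is that the non-smooth locus is the image of the non-regular locus of $X_{\overline K}$; with that correction, uniqueness of $\mathfrak p$ does follow from Proposition~\ref{prop:basic}, and your conjugacy argument for pure inseparability of $\kappa(\mathfrak p)/K$ is fine. In summary: the parts of (1) not requiring Queen's input, and the conic-structure statements in (2)--(3), are correctly argued; the ramification and residue-degree assertions, which are the substance of the proposition, rest on an unproven (and essentially assumed) classification.
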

		\begin{proof}
		(1) See Lemma~1 and Theorem~2 in \cite{Queen71}.
		
		(2) As noted before, since $X$ is geometrically reduced, we have $L = K' = H^0(Y,\mathcal O_Y)$.
		Then, by $\deg_{L}(K_Y) = -2$ and $K_Y + (p-1)C \sim 0$ where $C$ is supported at $\mathfrak q$, we see that $\mathfrak q$ is an $L$-rational point.
		Hence $Y\cong \mathbb P^1_L$. Assume $\pi^* \mathfrak p = p^\gamma \mathfrak q$, where $\gamma = 0$ or $1$.
		By (\ref{eq:1BJT}), we have $[L:K] = p^\gamma f = p^\gamma[L:\kappa(\mathfrak p)]$, and consequently $p^\gamma  = [\kappa(\mathfrak p):K]$.
		Note that $\mathfrak p$ is not a $K$-rational point (cf.\ \cite[Proposition~2.13]{Tanaka21}), thus $\gamma = 1$.
		
		(3) When $\mathfrak q$ is an $L$-rational point, we use the proof of (2) to obtain (a).
		When $\mathfrak q$ is not $L$-rational, we have $\deg_L \mathfrak q = 2$ by Proposition~\ref{prop:ga0}
		and $\deg_K \mathfrak p = 2$ or $4$ by (1).
		Now $[L:K] = p^\gamma f = 2 p^\gamma [L:\kappa(\mathfrak p)]$.
		The statement follows.
		\end{proof}

		We now give an explicit example of Proposition~\ref{prop:ga1-geo-reduced-p=2}.
		\begin{examp}%
		Let $K$ be a field of characteristic $2$ with $a,b\in K\setminus K^2$.
		Consider the following affine regular curve \begin{equation}
			\label{eq:example-geo-reduced-curve}
			X : y^2 = x^3 + ax + b.
		\end{equation}
		Let $\alpha =a^{1/2}$, $\beta = b^{1/2}$ and $L = k(\alpha,\beta)$.
		Then $X_L$ has a singular point $\mathfrak p_L := (x=\alpha, y=\beta)$.
		Let $Y=(X_L)^\nu$ be the normalization.
		It is easy to see that $t := (y+\beta)/(x+\alpha)$ is a local parameter of the point of $Y$ lying over $\mathfrak p_L$,
		thus $Y \cong \mathbb A^1_{k(\alpha,\beta)}$.
		Besides, the normalization morphism is given by \[
		x \mapsto t^2,\quad y \mapsto t^3 + \alpha t + \beta.
		\]
		Now, the point $\mathfrak p\in X$ defined by the prime ideal $(x^2 + a)$ has residue field $K[x,y]/(x^2+a, y^2+b)\cong k(\alpha,\beta)$,
		which has degree $4$ over $K$.
		The pullback of $\mathfrak p$ becomes the ideal $(t^4+\alpha^2) = (t^2+\alpha)^2$.
		If $\mathfrak q\in Y$ is the point corresponding to $(t^2 + \alpha)$, then \[
		\pi^* \mathfrak p = 2\mathfrak q \text{ \ and \ }  [\kappa(\mathfrak q):k(\alpha,\beta)] = 2.
		\]
		Therefore the completion $\overline X$ of $X$ is an example of case~(ii) in Proposition~\ref{prop:ga1-geo-reduced-p=2}.
		By replacing $a$ (resp.\ $b$) by $0$ in (\ref{eq:example-geo-reduced-curve}), we obtain an example for
		case~(a) (resp.\ (i)) in Proposition~\ref{prop:ga1-geo-reduced-p=2}.
		\end{examp}

		\subsection{The case when $X$ is geometrically non-reduced}
		With the notation and assumptions as in subsection~\ref{sec:curve-notation}, assume further that $X$ is geometrically non-reduced.
		There exists, as mentioned before, a height one field extension $K\subset L$ such that $X_L$ is integral but not normal, and for the normalization $Y$ of $X_L$, we have $L \subsetneq K' := H^0(Y,\mathcal O_Y)\subseteq K^{\frac{1}{p}}$.
		Since $0\sim \pi^* K_X \sim K_Y + (p-1)C$,  we see that $\deg_{K'} (p-1)C = 2$.
		Thus $C$ is supported on either a single point $\mathfrak q\in Y$ or two points $\mathfrak q_1,\mathfrak q_2\in Y$ (this happens only when $p=2$).
		We list all the possibilities explicitly in the following without proof as it is straightforward.
		
		\begin{prop}\label{prop:curve-nonreduced}
		\begin{enumerate}[\rm(1)]
			\item If we denote $X' := (X_{K'})_{\rm red}$, then $Y \cong (X')^\nu$.
			\item
			If $p=3$, then $X_L$ has a unique non-normal point, $ C= \mathfrak q$,  $Y\cong \mathbb{P}_{K'}^1$ and either $\pi^*\mathfrak p = \mathfrak q$ or $\pi^*\mathfrak p = 3\mathfrak q$.
			\item
			If $p=2$, then we fall into one of the following cases:
			\begin{enumerate}[\rm a),leftmargin=1.5em]
				\item $ C= 2\mathfrak q$, thus $\mathfrak q$ is a $K'$-rational point of $Y$ and $Y \cong \mathbb{P}_{K'}^1$;
				\item $ C= \mathfrak q_1+\mathfrak q_2$, and also $Y \cong \mathbb{P}_{K'}^1$;
				\item $ C= \mathfrak q$, $\kappa(\mathfrak q)/K'$ is an extension of degree two, and either
				\begin{enumerate}[\rm c1),leftmargin=1.5em]
					\item $Y\subset\mathbb{P}^2_{K'}$ is a smooth conic (possibly $\mathbb P^1_{K'}$), or
					\item $Y$ is isomorphic to the curve defined by $sx^2 + ty^2 + z^2 = 0$ for some $s,t\in K'\setminus K'^2$ such that $[K'^2(s,t):K'^2]=4$.
				\end{enumerate}
			\end{enumerate}
		\end{enumerate}
		\end{prop}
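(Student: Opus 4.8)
The plan is to run everything off the degree identity $\deg_{K'}(p-1)C=2$ recorded just above the statement, using that $Y$ is an arithmetic-genus-zero $K'$-curve so that Proposition~\ref{prop:ga0} applies. First I would note that $K_Y\sim-(p-1)C$ with $C>0$ forces $\deg_{K'}K_Y<0$, hence $H^0(Y,\omega_Y)=0$ and $h^1(Y,\mathcal O_Y)=0$ by Serre duality; since $H^0(Y,\mathcal O_Y)=K'$, Proposition~\ref{prop:ga0} governs $Y$ and yields $\deg_{K'}K_Y=-2$, which is exactly $(p-1)\deg_{K'}C=2$. In particular $\deg_{K'}C=1$ when $p=3$ and $\deg_{K'}C=2$ when $p=2$, and $\Supp C$ consists of one or two closed points of $Y$.

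For part (1) I would compare function fields. Since $L/K$ is purely inseparable and $X_L$ is integral, $K(X)\otimes_K L$ is a finite-dimensional domain over $K(X)$, hence a field, so $K(Y)=K(X_L)=K(X)\cdot L$. As $K'=H^0(Y,\mathcal O_Y)\subseteq K(Y)$ while $L\subseteq K'$, the compositum satisfies $K(X)\cdot K'=K(X)\cdot L=K(Y)$. Because $K'/K$ is purely inseparable, $(K(X)\otimes_K K')_{\rm red}\cong K(X)\cdot K'$, so $X':=(X_{K'})_{\rm red}$ is integral with $K(X')=K(X)\cdot K'=K(Y)$. Hence the induced morphism $Y\to X'$ is finite and birational, and as $Y$ is a regular curve this identifies $Y\cong(X')^\nu$.

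Parts (2) and (3) then reduce to a finite case check dictated by $\deg_{K'}C$ together with Proposition~\ref{prop:ga0}. For $p=3$, $\deg_{K'}C=1$ forces $C=\mathfrak q$ with $\mathfrak q$ a $K'$-rational point, so $Y$ has a rational point and $Y\cong\mathbb P^1_{K'}$ by Proposition~\ref{prop:ga0}(2); the two possibilities $\pi^*\mathfrak p=\mathfrak q$ or $3\mathfrak q$ come from $\pi^*\mathfrak p=p^\gamma\mathfrak q$ with $\gamma\in\{0,1\}$, as computed after (\ref{eq:1BJT}). For $p=2$, $\deg_{K'}C=2$ and $\Supp C$ is either a single degree-one point taken with multiplicity two (case a) or two distinct rational points (case b), in both of which $Y$ carries a $K'$-rational point and hence $Y\cong\mathbb P^1_{K'}$; or a single point $\mathfrak q$ with $[\kappa(\mathfrak q):K']=2$ (case c), where $Y$ need not have a rational point and Proposition~\ref{prop:ga0}(3) leaves precisely a smooth conic (c1, possibly $\mathbb P^1_{K'}$) or the geometrically non-reduced quadric $sx^2+ty^2+z^2=0$ (c2).

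I expect the only genuinely delicate step to be part (1): making precise that descending to $(X_{K'})_{\rm red}$ and normalizing recovers $Y$, i.e.\ that the new constants in $K'=H^0(Y,\mathcal O_Y)$ are exactly captured by the compositum identity $K(X)\cdot K'=K(Y)$. Once this and the degree identity are in hand, the enumeration in (2) and (3) is routine, the only care being to invoke the conic classification of Proposition~\ref{prop:ga0}(3) in order to separate the smooth case c1 from the geometrically non-reduced case c2.
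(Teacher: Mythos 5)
Your route is exactly the one the paper intends: the paper gives no proof of this proposition (it is stated ``without proof as it is straightforward''), and the entire content is meant to follow from the identity $\deg_{K'}(p-1)C=2$ established in the paragraph just above it, the genus-zero classification of Proposition~\ref{prop:ga0}, and the computation $\pi^*\mathfrak p=p^\gamma\mathfrak q$ with $\gamma\in\{0,1\}$ following (\ref{eq:1BJT}). Your part~(1) argument (function-field compositum, then a finite birational morphism $Y\to X'$ with $Y$ normal) is correct, and your enumeration of the possible shapes of $C$ in parts (2) and (3) is complete; the uniqueness of the non-normal point in (2) also falls out, since the conductor's support, a single point, dominates the non-normal locus of $X_L$.

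The one clause your proof does not reach is the field-degree condition in case c2). Proposition~\ref{prop:ga0}(3) only yields $Y\cong\{sx^2+ty^2+z^2=0\}$ for some $s,t\in K'\setminus K'^2$, whereas the statement asserts in addition that $[K'(s,t):K']=4$ --- which, read correctly as $[K'(s^{1/2},t^{1/2}):K']=4$ (for $s,t\in K'$ the literal assertion is vacuous, so this is a typo in the paper), is strictly stronger than $s,t\notin K'^2$. This extra condition is forced by the normality of $Y$, and you should add the one-line argument: if $[K'(s^{1/2},t^{1/2}):K']=2$, write $t^{1/2}=a+bs^{1/2}$, i.e.\ $t=a^2+b^2s$ with $a,b\in K'$; then in characteristic two
\[
sx^2+ty^2+z^2=s(x+by)^2+(ay+z)^2,
\]
so after the linear change of coordinates $u=x+by$, $v=y$, $w=ay+z$ the curve becomes $su^2+w^2=0$, which is not regular at $[0:1:0]$ (in the chart $v=1$ the defining equation lies in $\mathfrak m^2$ for $\mathfrak m=(u,w)$, so the embedding dimension is $2$), contradicting the normality of $Y$. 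With this observation inserted, your proof is complete and matches the argument the paper leaves to the reader.
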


		\section{Canonical bundle formula for fibrations with generic fiber of arithmetic genus zero}
		In this section, we shall treat a special kind of fibration with the generic fiber being a curve of arithmetic genus zero, which is an intermediate situation when treating inseparable fibrations. We work over an algebraically closed field $k$ of characteristic $p>0$.
		
		\begin{thm}\label{thm:can-bd-g0}
		Let $f\colon X\to S$ be a fibration of relative dimension one from a normal $\mathbb Q$-factorial quasi-projective variety $X$ onto a normal variety $S$.
		Let $\Delta$ be an effective $\mathbb Q$-divisor on $X$ and $D$ a $\mathbb Q$-Cartier $\mathbb Q$-divisor on $S$.
		Let $\mathfrak M$ be a movable linear system without fixed components such that
		\[\deg_{K(S)} \mathfrak M:=\deg_{K(S)} M_0|_{X_{K(S)}} > 0 \hbox{ \ for some \ } M_0\in \mathfrak M.\]
		Assume that $K_X + M_0 + \Delta \sim_\mathbb Q f^* D$ and that one of the following conditions holds:
		\begin{enumerate}[\rm(a)]
			\item $f$ is separable;
			\item $f$ is inseparable, and there is a generically finite morphism from $S$ to an abelian variety;
			\item $f$ is inseparable, and $\dim S=2$.
		\end{enumerate}
		Then there exists a rational number $t>0$ such that $D\geQ tK_S$, where we can take $t=1$ {\rm(}resp.\ $1/2$, $3/4${\rm)} under the condition {\rm(a)} {\rm(}resp.\ {\rm(b), (c))}.
		\end{thm}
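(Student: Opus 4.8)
The plan is to first observe that the hypotheses force the generic fibre to have arithmetic genus zero, then to settle the separable case (a) by adjunction along a horizontal member of $\mathfrak M$, and finally to reduce the inseparable cases (b), (c) to (a) by a height-one purely inseparable base change followed by a descent through the canonical bundle formula for purely inseparable morphisms. To begin, restricting $K_X+M_0+\Delta\simQ f^*D$ to the generic fibre kills $f^*D$, so $K_{X_{K(S)}}+M_0|_{X_{K(S)}}+\Delta|_{X_{K(S)}}\simQ 0$; since $\deg_{K(S)}\mathfrak M>0$ and $\Delta\ge 0$ this forces $\deg_{K(S)}K_{X_{K(S)}}<0$, and by Proposition~\ref{prop:ga0} the regular curve $X_{K(S)}$ has $p_a=0$ and $\deg_{K(S)}K_{X_{K(S)}}=-2$. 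Write $d:=\deg_{K(S)}\mathfrak M$.

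\emph{Case (a).} First I would shrink $S$ to a big open subset of its regular locus so that $K_S$ is Cartier and all pullbacks are defined, and choose $M_0$ general so that it is a prime horizontal divisor whose normalisation carries a generically finite morphism $g:=f|_{M_0^\nu}\colon M_0^\nu\to S$ of degree $d$. Applying Lemma~\ref{lem:log-adj} to the prime divisor $M_0$ inside the total boundary $M_0+\Delta$ (whose coefficient $a$ along $M_0$ is $\ge 1$), together with $(K_X+M_0+\Delta)|_{M_0^\nu}=g^*D$, gives $g^*D\simQ K_{M_0^\nu}+B_{M_0^\nu}+(a-1)M_0|_{M_0^\nu}$ with $B_{M_0^\nu}\ge 0$; moreover $(a-1)M_0|_{M_0^\nu}\geQ 0$ because $\mathfrak M$ is movable without fixed part, so $M_0|_{M_0^\nu}\simQ M|_{M_0^\nu}\ge 0$ for a second general member $M$. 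The point to secure is that $g$ is \emph{separable}: as $f$ is separable the generic fibre is a smooth conic (Proposition~\ref{prop:basic-fib}), on which $\mathfrak M$ restricts to a base-point-free system of positive degree, so a general member meets it in a divisor supported at separably-generated points, whence $K_{M_0^\nu}\simQ g^*K_S+R$ with $R\ge 0$. Then $g^*(D-K_S)\geQ 0$, and pushing forward by Lemma~\ref{lem:push-down} yields $d(D-K_S)\geQ 0$, i.e.\ $D\geQ K_S$, so $t=1$.

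\emph{Cases (b), (c).} Now $f$ is inseparable, so $X_{\overline{K(S)}}$ is non-reduced (Proposition~\ref{prop:basic-fib}); being a genus-zero conic, it is geometrically non-reduced, hence $p=2$ by Proposition~\ref{prop:ga0}(3). The idea is to separate $f$ by a finite purely inseparable base change arising from the pushed-down foliation of \S\ref{sec:pushing-down}: applying Lemma~\ref{push-foliation}, and iterating if necessary, I obtain a height-one morphism $\tau\colon T\to S$ with $g\colon Y:=(X_T)^\nu\to T$ separable and $\pi\colon Y\to X$ purely inseparable. Pulling back $K_X+M_0+\Delta\simQ f^*D$ and using $\pi^*K_X\simQ K_Y+(p-1)\det\Omega^1_{Y/X}$ with $\det\Omega^1_{Y/X}\ge 0$ (Proposition~\ref{prop:can-foliation}; cf.\ Proposition~\ref{prop:compds}), I can rewrite the relation as $K_Y+N_0+\Delta_Y\simQ g^*(\tau^*D)$, where $N_0$ is a horizontal movable member of positive degree over $T$ and $\Delta_Y\ge 0$ absorbs $(p-1)\det\Omega^1_{Y/X}$, $\pi^*\Delta$, and the fixed part created by the base change; the survival of a positive horizontal movable part is precisely Proposition~\ref{prop:JiWal-refined-version}. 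Applying case (a) to $g\colon Y\to T$ then gives $\tau^*D\geQ K_T$.

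\emph{Descent and the main obstacle.} It remains to convert $\tau^*D\geQ K_T$ into $D\geQ tK_S$. By Proposition~\ref{prop:can-foliation} applied to $\tau$, $K_T\simQ \tau^*K_S-(p-1)\det\Omega^1_{T/S}$ with $\det\Omega^1_{T/S}\ge 0$, so $\tau^*D\geQ \tau^*K_S-(p-1)\det\Omega^1_{T/S}$; since $\tau$ is finite, $D\geQ tK_S$ is equivalent to $\tau^*D\geQ t\,\tau^*K_S$ (push forward by Lemma~\ref{lem:push-down}), which follows once one proves the estimate $\det\Omega^1_{T/S}\leQ \tfrac{1-t}{p-1}\tau^*K_S$. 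The whole weight of the precise constant therefore rests on bounding the foliation (conductor) divisor $\det\Omega^1_{T/S}$ from above against $\tau^*K_S$. In case (b) I would exploit the generically finite morphism to an abelian variety, which makes $K_S$ and $K_T$ pseudo-effective (Proposition~\ref{prop:char-abel-var}) and constrains the foliation via the triviality of $\mathcal T_A$, to extract $t=\tfrac12$; in case (c), where $\dim S=2$, the value $t=\tfrac34$ should come from the explicit geometry of rank-one foliations on the surface $T$ combined with the degree $-2$ conic structure of the fibres. \textbf{I expect this foliation estimate} — isolating the admissible correction and thereby pinning down the exact constants $\tfrac12$ and $\tfrac34$ — \textbf{to be the main obstacle}, while the reduction to the separable case and the adjunction computation of case (a) are comparatively routine.
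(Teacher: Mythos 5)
Your reduction to arithmetic genus zero and your treatment of the cases where the horizontal member is separable over $S$ track the paper's Cases (1) and (2.1), but your case (a) contains a genuine gap at the separability claim, and it is exactly where the paper has to work hardest even in the separable setting. Smoothness of the generic fibre does \emph{not} force a general member of a movable system to meet it in separable points when $p=2$: take $X=\mathbb P^1\times S$ with coordinates $[x:z]$, $p=2$, and the pencil spanned by $\{x^2=sz^2\}$ and $\{x^2=tz^2\}$ for algebraically independent functions $s,t$ on $S$. Its general member $M=\{x^2=(s+ct)z^2\}$ is a prime divisor, purely inseparable of degree $2$ over $S$, the pencil has no fixed components, and $K_X+M\simQ f^*(K_S+E')$ for a suitable divisor $E'$, so all hypotheses of the theorem hold with $\Delta=0$ and $f$ separable (even smooth). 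For such a horizontal component your key inequality $K_{M_0^\nu}\geQ g^*K_S$ is false; by Proposition~\ref{prop:can-foliation} the relation goes the other way, $K_{T^\nu}\sim \sigma^*K_S-(p-1)\det\Omega^1_{T^\nu/S}$. The paper's Case (2.2.1) exists precisely for this situation: one passes to $S_1$, the normalization of $S$ in $K(T)$, over which the preimage $T_1\subset (X_{S_1})^\nu$ becomes a birational section, and then adjunction on $T_1$ together with Proposition~\ref{prop:compds} and Lemma~\ref{lem:push-down} recovers $t=1$. (A smaller sloppiness in the same case: a general member of $\mathfrak M$ need not be prime or horizontal, which the paper handles by splitting off one horizontal component $T$ and using Lemma~\ref{lem:4K52}.)

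For cases (b) and (c) you have not given a proof: the ``foliation estimate'' $\det\Omega^1_{T/S}\leQ\frac{1-t}{p-1}\tau^*K_S$, which you yourself flag as the main obstacle, is exactly where the constants $\frac12$ and $\frac34$ live, and no such upper bound on the conductor of a purely inseparable cover in terms of $\tau^*K_S$ is available in general --- nor is it how the paper argues. Your preliminary step is also shaky: applying case (a) after a separating base change requires a nontrivial horizontal movable part via Proposition~\ref{prop:JiWal-refined-version}, whose hypothesis (generic global generation of $\Omega^1_{T/S}$ by global sections) the paper can only verify in the maximal Albanese setting of Section~7, and which you do not justify, in particular in case (c). The paper's mechanism never separates $f$: it stays with the inseparable bisection $T$, base changes to $S_1$, and in the non-normal case (2.2.2) plays adjunction on $T_1$ against Lemma~\ref{lem:log-adj}; the latter reintroduces $-\pi^*(K_X+M+\Delta)|_{T^\nu_1}$, which moved to the left-hand side produces the identity $2\pi^*(K_X+M+\Delta)|_{T^\nu_1}\simQ(\pi^*f^*K_S)|_{T^\nu_1}+K_{T^\nu_1}+(\text{effective})+(\text{exceptional over }S)$. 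The constant $\frac12$ in (b) then comes from $K_{T^\nu_1}\geQ 0$ because $T^\nu_1$ is generically finite over an abelian variety (Proposition~\ref{prop:char-abel-var}), and $\frac34$ in (c) from the degree count forcing $S_1'=S^{\oneoverp}$, so that $K_{T^\nu_1}$ equals the pullback of $K_{S^{\oneoverp}}\simQ\frac12\tau^*K_S$ up to divisors exceptional over $S$. So neither half of your proposal closes: case (a) fails on inseparable multisections, and cases (b), (c) are reduced to an unproven (and likely false) estimate rather than to the adjunction identity that actually yields the constants.
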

		\begin{proof}
		Since the statement involves only codimension one points, we may restrict to a big open subset of $S$ and assume that $S$ is regular.
		
		By assumption we have $p_a(X_{K(S)})=0$, thus $\deg_{K(S)} (M_0+\Delta) = 2$.
		In the following, we take $M\in \mathfrak M$ to be a general member and $T$ one of its irreducible horizontal components. Write $M= T + G$.
		We get $G|_{T^\nu} \geQ 0$ by the following lemma.
		\begin{lem}\label{lem:4K52}%
			Let $X$ be a normal $\mathbb Q$-factorial variety and $M$ a movable divisor on $X$.
			If $T$ is an irreducible component of $M$ that is not a fixed component, then $(M-T)|_{T^\nu} \geQ 0$.
		\end{lem}
		\begin{proof}
			Write $M = nT + G'$ with $T\wedge G' = 0$. Then $(M-T)|_{T^\nu} \simQ \frac1n \bigl( (n-1)M + G'\bigr)|_{T^\nu} \geQ 0$.
		\end{proof}
		
		\smallskip
		
		Write $\Delta=\alpha T + \Delta'$ with $T\wedge \Delta' = 0$, and then $0\le \alpha \le 1$ as $\deg_{K(S)} (M_0+\Delta) = 2$. 
		We may write
		\[
		G+\Delta = G+\alpha T+\Delta' = (1-\alpha)G+\alpha M +\Delta',
		\]
		thus $(G+\Delta)|_{T^\nu}  \succeq_{\mathbb{Q}} 0$.

		\medskip
		Case\thinspace (1): $\deg_{K(S)} T =1$.
		Then $T \to S$ is a birational section. By adjunction formula, there exists an effective divisor $E_{T^\nu}$ on $T^\nu$ such that
		$$(K_X+ M+\Delta)|_{T^\nu} = (K_X + T + G + \Delta)|_{T^\nu} \sim_{\mathbb{Q}} K_{T^\nu} + E_{T^\nu}.$$
		Denote by $\sigma\colon T^\nu \to S$ the natural birational morphism.
		Now we have $K_{T^\nu} + E_{T^\nu} \sim_{\mathbb{Q}} \sigma^*D$, and then by Lemma~\ref{lem:push-down}
		we obtain
		$$D \sim_{\mathbb{Q}} \sigma_*(K_{T^\nu} + E_{T^\nu}) = K_S + E_S,$$
		where $E_S \geq 0$.
		
		\medskip
		Case\thinspace (2): $\deg_{K(S)} T =2$.
		
		Case\thinspace (2.1): $T \to S$ is a separable morphism.
		As in case~(1), there exists $E_{T^\nu} \geq0$ such that
		\begin{equation}\label{eq:82GZ}
			\sigma^*D\sim_{\mathbb{Q}} (K_X+ M + \Delta)|_{T^\nu}\sim_{\mathbb{Q}} K_{T^\nu} + E_{T^\nu}.
		\end{equation}
		Since $\sigma\colon T^\nu\to S$ is separable, $K_{T^\nu} \succeq \sigma_*K_S+R_{T^\nu}+N_{T^\nu}$, where $R_{T^\nu}\ge0$ and $N_{T^\nu}$ is exceptional.
		Now, pushing forward (\ref{eq:82GZ}) via $\sigma$ gives
		$$D \sim_{\mathbb{Q}} \frac12\sigma_*(K_{T^\nu} + E_{T^\nu}) \geQ  K_S.$$
		
		\medskip
		Case\thinspace (2.2): $T \to S$ is inseparable, which only happens when $p=2$. Let $S_1$ be the normalization of $S$ in $K(T)$. 
		In other words,  the natural morphisms $T^\nu \to S_1 \to S$ form the Stein factorization of $\sigma$. Hence, $T^\nu\to S_1$ is birational and $S_1\to S$ is finite.
		Since $K(S_1)/K(S)$ is a purely inseparable extension of degree two, $X_{K(S_1)}$ is integral by Proposition~\ref{prop:ga0}.
		By shrinking $S$ again, we may assume that $K_{S_1}$ is $\mathbb Q$-Cartier.
		
		\medskip
		Case\thinspace (2.2.1): $X_{K(S_1)}$ is normal. Consider the following commutative diagram
		$$\xymatrix{
			&X_1:=(X_{S_1})^\nu \ar[rd]^g\ar[r]^<<<<{\nu}\ar@/^7mm/[rr]|{\;\pi\;} &X_{S_1}\ar[r]\ar[d] &X\ar[d]^{f}\\
			&   &S_1\ar[r]^{\tau} &S\rlap. \\
		}$$
		The morphism $\pi\colon X_1 \to X$ is finite and purely inseparable, and the normalization morphism $\nu\colon X_1 \to X_{S_1}$ induces an isomorphism of the generic fibers over $S_1$. Since $X_{S_1} \to S_1$ has a birational section which is mapped to $T$, the prime divisor $T_1$ supported on $\pi^{-1}T$ is a birational section, and $\pi^*T = 2T_1$.
		Applying Proposition~\ref{prop:compds} and Remark~\ref{rmk:E}, we have
		\begin{equation}\label{eq:4ZHM}
			\pi^*(K_X - f^*K_S) \sim_{\mathbb{Q}} K_{X_1} - g^*K_{S_1} + E_1 + N,
		\end{equation}
		where $E_1$ is effective and $g$-vertical, and $N$ is exceptional over $S$.
		Write $\pi^*M = 2T_1 + E_1'$. Then
		$$\pi^*(K_X+ M+\Delta) \simQ K_{X_1} + 2T_1 + g^*\tau^*K_S - g^*K_{S_1} + E_1 + \pi^*\Delta + E_1' + N.$$
		Denote by $\sigma_1\colon T^\nu_1 \buildrel\nu\over\to T_1 \to S_1$ the composition morphism. Consider the restriction on $T^\nu_1$. Applying the adjunction formula $(K_{X_1} + T_1)|_{T^\nu_1} \sim_{\mathbb{Q}} K_{T^\nu_1} + \Delta_{T^\nu_1}$, we have
		\begin{equation}\label{eq:theo-5.1-2.2.1-5}
			\begin{split}
				\tau^*D|_{T^\nu_1} &\sim_{\mathbb{Q}}(K_{X_1} + T_1 + T_1 + g^*\tau^*K_S - g^*K_{S_1}  +E_1 + \pi^*\Delta + E_1' + N)|_{T^\nu_1}\\
				& \sim_{\mathbb{Q}} (K_{T^\nu_1} - \sigma_1^*K_{S_1}) + \Delta_{T^\nu_1} +(T_1+E_1' + E_1+\pi^*\Delta)|_{T^\nu_1}+ \sigma_1^*\tau^*K_S + N|_{T^\nu_1}.
			\end{split}
		\end{equation}
		Note that
		\begin{itemize}
			\item
			$(T_1+E_1')|_{T^\nu_1}\geQ 0$ by Lemma~\ref{lem:4K52} since $\pi^*M = 2T_1 + E_1'$;
			\item
			both $K_{T^\nu_1} - \sigma_1^*K_{S_1}$ and $N|_{T^\nu_1}$ are exceptional over $S$;
			\item both $E_1$ and $\Delta$ are vertical over $S$, and consequently, the restriction $(E_1+\pi^*\Delta)|_{T_1^\nu}$ is effective.
		\end{itemize}
		Applying $\tau_*\sigma_{1*}$ to (\ref{eq:theo-5.1-2.2.1-5}), by Lemma~\ref{lem:push-down} we obtain that $D \sim_{\mathbb{Q}}  K_S + E_S$ for some divisor $E_S \geq 0$.
		
		\medskip
		Case\thinspace (2.2.2): $X_{K(S_1)}$ is not normal. This means that $X_{K(S)}$ is a non-smooth conic as described in Proposition~\ref{prop:ga0} (3). In this case, $f$ is inseparable and we assume that one of the conditions (b,\thinspace c) holds.
		
		Applying Proposition~\ref{prop:ga0}, we see that $X_{K(S_1)}$ is not normal along the preimage of the generic point of $T$, and $K(S_1)$ is not algebraically closed in $K(X)\otimes_{K(S)}K(S_1)$. Let $X_1:=(X_{S_1})^\nu$ and denote by $S_1'$ the normalization of $S_1$ in $X_1$.
		Let $T_1$ be the irreducible divisor corresponding to $\pi^{-1}T$.
		Denote by $\rho\colon T^\nu_1 \to T^\nu$ the induced morphism by the normalizations.
		These varieties fit into the following commutative diagram
		$$\xymatrix{
			&T^\nu_1\ar[d]\ar[rr]^{\rho} & & T^\nu\ar[d]\ar[ldd]_<<<<<<{\delta}\\
			&X_1:=(X_{S_1})^\nu \ar[d]^g\ar[r]^<<<<{\nu}\ar@/^2pc/[rr]^{\pi} &X_{S_1}\ar[r]_>>>{\pi_1}\ar[d]_{f_1} &X\ar[d]^{f}\\
			&S_1'\ar[r]   &S_1\ar[r]^{\tau} &S\rlap. \\
		}$$
		The conductor divisor of $X_1 \to X_{S_1}$ is like $aT_1 + C'$ where $a\geq 1$, namely, $\nu^*K_{X_{S_1}}= K_{X_1} + aT_1 + C'$.
		Similarly to (\ref{eq:4ZHM}), we have
		$$\pi^*K_X \simQ K_{X_1} + aT_1 + E_1 + N_1 - \nu^*f_1^*K_{S_1/S} ,$$
		where $E_1$ is effective and $N_1$ is $g$-vertical.
		We may write that $\pi^*T = bT_1$ and $\pi^*M = bT_1 + E_1'$. Then
		$$\pi^*(K_X+ M+\Delta) \simQ K_{X_1} + (a+b)T_1 + E_2 + N_1 - \nu^*f_1^*K_{S_1/S},$$
		where $E_2 = E_1 + \pi^*\Delta + E'_1$ is effective.
		Since $(K_X+ M+\Delta)_{K(S)} \sim 0$, we conclude that $a=b=1$ and that $T_1 \to S'_1$ is birational.
		Note that $T\not\subseteq\Supp\Delta$, thus Lemma~\ref{lem:log-adj} gives
		$$T_1|_{T^\nu_1} = \rho^*(T|_{T^\nu}) \simQ \rho^*(K_{T^\nu} + B_{T^\nu} + M|_{T^\nu}) - \pi^*(K_X+M+\Delta)|_{T^\nu_1},$$
		where $B_{T^\nu}\ge0$.
		Applying the adjunction formula on $T_1$ we have
		\begin{equation*}
			\begin{split}
				\pi^*(K_X+ M+\Delta)|_{T^\nu_1} & \simQ (K_{X_1} + T_1)|_{T^\nu_1} +  T_1|_{T^\nu_1} + (E_2 + N_1 - \nu^*f_1^*K_{S_1/S})|_{T^\nu_1} \\
				&\sim_{\mathbb{Q}} K_{T^\nu_1} + \Delta_{T^\nu_1} + \rho^*(K_{T^\nu} + B_{T^\nu}+M|_{T^\nu}) - \pi^*(K_X+M+\Delta)|_{T^\nu_1} \\
				& \qquad + (E_2 + N_1 - \nu^*f_1^*K_{S_1/S})|_{T^\nu_1}
			\end{split}
		\end{equation*}
		It follows that
		\begin{equation}\label{eq:restr}
			\begin{split}
				2\pi^*(K_X+ M+\Delta)|_{T^\nu_1} \sim_{\mathbb{Q}} {}& (\pi^*f^*K_S)|_{T_1^\nu} + K_{T^\nu_1}  \\
				&+ \Delta_{T^\nu_1} + \rho^*B_{T^\nu} + \pi^*M|_{T^\nu_1}+ E_2|_{T^\nu_1} + N_2
			\end{split}
		\end{equation}
		where $N_2= N_1|_{T^\nu_1} + \rho^*(K_{T^\nu}-\delta^*K_{S_1})$ is exceptional over $S$.
		
		If condition (b) holds, then there is a generically finite morphism $T_1^\nu\to A$, where $A$ is an abelian variety. 
		Using Nagata's compactification over $A$, we can embed  $T_1^\nu$ into a proper and normal birational model $\overline T_1$ over $A$.
		Since $T_1^\nu$ is quasi-projective, by Chow's lemma (\cite[Corollary~2.6]{Conrad07}), we may assume that $\overline T_1$ is projective. 
		Since $\overline T_1$ admits a generically finite morphism to an abelian variety, $\overline T_1$ is of m.A.d.
		Applying Proposition~\ref{prop:char-abel-var}, we have that $K_{\overline{T}_1} \geQ 0$, and thus $K_{T_1^\nu} \geQ 0$.
		Note that $2\pi^*(K_X+ M+\Delta)|_{T^\nu_1} = 2\tau^*D|_{T^\nu_1}$. Pushing down the equation (\ref{eq:restr}) via the morphism $T^\nu_1 \to S$ yields
		$D \sim_{\mathbb{Q}}  \frac{1}{2}K_S + E_S$ for some divisor $E_S \geq 0$.

		If the condition (c) holds, then $S_1'\to S$ is of height one and of degree $\geq p^2=4$, this implies $S_1' = S^{\oneoverp}$. It follows that $K_{T^\nu_1} = g^*K_{S^{\oneoverp}}|_{T^\nu_1}  + N_3$ where $N_3$ is a divisor on $T^\nu_1$ exceptional over $S$.
		Pushing down the equation (\ref{eq:restr}) via the morphism $T^\nu_1 \to S$ yields
		$D \sim_{\mathbb{Q}}  \frac{3}{4}K_S + E_S$ for some divisor $E_S \geq 0$.
		\end{proof}
		\medskip

		As an application of the above theorem, if $K_X + M_0 + \Delta$ is anti-nef, we obtain the following structure result, which will play a key role in the proof of Theorem~\ref{thm:intro:S-abelian}.
		\begin{prop}\label{prop:str-num0}
		Let $X$ be a $\mathbb{Q}$-factorial normal projective variety, and let $\Delta$ be an effective $\mathbb Q$-divisor on $X$.
		Let $f\colon X\to S$ be a fibration of relative dimension one,
		and $\mathfrak M$ a movable linear system without fixed components such that
		$\deg_{K(S)} \mathfrak M >0$.
		Assume that the following three conditions hold:
		\begin{itemize}
			\item[\rm(1)] $S$ is of m.A.d.;
			\item[\rm(2)] $-(K_X + M_0 + \Delta)$ is nef, where $M_0\in\mathfrak M$; and
			\item[\rm(3)]
			either $(X_{K(S)}, \Delta_{K(S)})$ is klt, or if $T$ is a (the unique) horizontal irreducible component of $\Delta$ with coefficient one, then $\deg_{K(S)} T =1$ and the restriction $T|_{T^\nu}$ on the normalization of $T$ is pseudo-effective.
		\end{itemize}
		Then
		\begin{itemize}
			\item[\rm(i)]   $S$ is isomorphic to an abelian variety;
			\item[\rm(ii)]  $M_0$ is semi-ample with numerical dimension $\nu(M_0) =1$;
			\item[\rm(iii)] $|M_0|$ induces a fibration $g\colon X\to \mathbb{P}^1$;
			\item[\rm(iv)]  
			a fiber of $g$ over a closed point $t\in\mathbb P^1$ (denoted by $G_t$) is either isomorphic to an abelian variety or a multiple of an abelian variety, moreover a general fiber $G_t$ is reduced and $\Delta|_{G_t} = 0$.
			
		\end{itemize}
		\end{prop}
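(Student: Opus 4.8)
The plan is to manufacture, out of the movable system $\mathfrak M$, a second fibration $g\colon X\to\mathbb P^1$ transverse to $f$, to analyse a general fibre of $g$ by adjunction, and then to descend the resulting triviality to the base $S$. First I record the numerology of the generic fibre. Since $k$ is algebraically closed and $X_{K(S)}$ is a regular integral curve with $H^0=K(S)$ and $p_a=0$, Proposition~\ref{prop:ga0} gives $\deg_{K(S)}K_{X_{K(S)}}=-2$, so a general closed fibre $X_s$ is isomorphic to $\mathbb P^1_k$. As $-(K_X+M_0+\Delta)$ is nef, its degree on $X_s$ is nonnegative, whence $d:=\deg_{K(S)}\mathfrak M$ and $\deg_{K(S)}\Delta$ satisfy $d+\deg_{K(S)}\Delta\le 2$, with $d\in\{1,2\}$.

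For statement (ii) I consider the rational map $\varphi_{\mathfrak M}\colon X\dashrightarrow Z$ attached to $\mathfrak M$. Because $M_0\cdot X_s=d>0$, the restriction of $\varphi_{\mathfrak M}$ to a general fibre $X_s\cong\mathbb P^1_k$ is non-constant, so $X_s$ dominates $Z$ and $\dim Z\le 1$; since $\mathfrak M$ moves and has no fixed components, $\dim Z\ge 1$. Hence $Z$ is a curve, and being the image of $\mathbb P^1_k$ it is isomorphic to $\mathbb P^1$. Thus $\mathfrak M$ induces a fibration $g\colon X\to\mathbb P^1$ (a morphism once $M_0$ is seen to be semiample, passing to the Iitaka model if necessary), whose general fibre is $f$-horizontal, with $\kappa(X,M_0)=\nu(M_0)=\dim Z=1$.

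For (iii), let $G$ be the normalization of the reduction of a general fibre $G_t$ of $g$. Since $\mathfrak M$ has no fixed components, the free part of $M_0$ is pulled back from $Z$, so $M_0|_G\simQ 0$; moreover the family $\{G_t\}$ covers $X$, so $f|_G\colon G\to S$ is generically finite and surjective, and composing with the Albanese map shows $G$ is of maximal Albanese dimension. Writing $K_X+M_0+\Delta=-L$ with $L$ nef and applying adjunction (Proposition~\ref{lem:adjunction}) to the prime divisor $(G_t)_{\mathrm{red}}$, together with $M_0|_G\simQ 0$ and $(G_t)_{\mathrm{red}}|_G\simQ 0$, yields $K_G\leQ (K_X+(G_t)_{\mathrm{red}})|_G\simQ K_X|_G\simQ -\Delta|_G-L|_G\leQ 0$. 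On the other hand $\kappa(G)\ge 0$, i.e.\ $K_G\geQ 0$, by Proposition~\ref{prop:char-abel-var}(1). Therefore $K_G\simQ 0$, which forces $\Delta|_G\simQ 0$ and $L|_G\simQ 0$, and Proposition~\ref{prop:char-abel-var}(3) shows $G$ is an abelian variety (if $g$ is inseparable, $G_t$ is a multiple of $G$, giving the ``multiple of an abelian variety'' case). Returning to (ii): $M_0\cdot C=0$ for every $g$-vertical curve $C$ and $M_0\cdot C>0$ for $g$-horizontal $C$, so $M_0$ is nef and purely $g$-vertical, whence $M_0\simQ g^*\mathcal O_{\mathbb P^1}(e)$ for some $e>0$ is semiample.

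It remains to upgrade $S$ from maximal Albanese dimension to an abelian variety, which is (i). The morphism $\sigma=f|_G\colon G\to S$ is generically finite with $G$ abelian and $K_G\simQ 0$; descending this relation to $S$ will give $K_S\leQ 0$, and then $K_S\geQ 0$ from $\kappa(S)\ge 0$ (Proposition~\ref{prop:char-abel-var}(1)) forces $K_S\simQ 0$, so that $S$ is an abelian variety by Proposition~\ref{prop:char-abel-var}(3). When $\sigma$ is separable the descent is just the ramification formula $K_G\simQ\sigma^*K_S+R$ with $R\ge 0$ and a push-forward via Lemma~\ref{lem:push-down}; here condition~(3) plays the role it does in Theorem~\ref{thm:can-bd-g0}, guaranteeing that the boundary terms appearing in the restriction to the horizontal leaf are $\mathbb Q$-effective. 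The hardest step, and the one I expect to be the main obstacle, is the case where $\sigma$ (equivalently $f$, or the Albanese map $a_S$) is inseparable: then $\sigma^*K_S$ and $K_G$ differ by the foliation correction $(p-1)\det\Omega^1$ of Propositions~\ref{prop:can-foliation} and~\ref{prop:compds}, whose sign must be controlled. This is precisely the content of the case analysis (a)--(c) in the proof of Theorem~\ref{thm:can-bd-g0}, where hypothesis~(1) supplies the generically finite map to an abelian variety needed for case (b); running that analysis yields the required anti-effectivity of $K_S$.
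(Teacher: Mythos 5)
There is a genuine gap, and it sits at the heart of the proposition: your argument for (ii). From the fact that $\varphi_{\mathfrak M}$ is non-constant on a general fibre $X_s$ you conclude that ``$X_s$ dominates $Z$ and $\dim Z\le 1$''; this is a non-sequitur, since a non-constant map from a curve to $Z$ only shows that the image of $X_s$ is a curve \emph{inside} $Z$, not that it equals $Z$. At this point you have not used the nefness hypothesis, and without it the claim is simply false: take $X=\mathbb P^1\times A$ with $A$ an abelian surface, $f=\mathrm{pr}_2$, $\Delta=0$ and $\mathfrak M=\lvert \mathrm{pr}_1^*\mathcal O_{\mathbb P^1}(1)+\mathrm{pr}_2^*\Theta\rvert$ with $\Theta$ very ample; then $\mathfrak M$ is base-point free with $\deg_{K(S)}\mathfrak M=1$, yet $\dim Z=3$ (of course $-(K_X+M_0)$ is not nef here, which is exactly the hypothesis your step ignores). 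Proving $\nu(M_0)=1$ and semi-ampleness is where the paper spends almost all of its effort, and in the opposite logical order to yours: it first shows (its Lemma~\ref{lem:V=0}, by adjunction on a horizontal component $T$ of a general member, using m.A.d.\ to get $K_{T^\nu}\geQ 0$) that a general $M\in\mathfrak M$ has no vertical part, that $T$ is an abelian variety, and that $(K_X+M+\Delta)|_{T}\equiv 0$; it then obtains $K_S\simQ 0$, hence (i), by applying Theorem~\ref{thm:can-bd-g0} \emph{directly to $f\colon X\to S$} (a perturbation $\epsilon f^*H$ when the fibre degree of $K_X+M_0+\Delta$ is negative, Lemma~\ref{lem:relativetrivial} when it is zero, with m.A.d.\ supplying hypothesis (b) of that theorem in the inseparable case); and only then does it prove $M|_M\equiv 0$, via the Hodge index theorem on general surface sections together with a case analysis (its Cases 2.I--2.III) that requires purely inseparable base changes when the generic fibre is a non-smooth conic. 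Since your (iii) presupposes (ii) and your (i) presupposes (iii), the whole chain rests on the broken step.

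Two further points would still block you even if (ii) were granted. First, a general closed fibre is isomorphic to $\mathbb P^1_k$ only when $f$ is separable; when $f$ is inseparable (a case the proposition must cover --- it is precisely the case arising in its application to Theorem~\ref{thm:S-abelian}) the general fibres are non-reduced, so several of your reductions, and the very set-up of your argument for (ii), do not apply. Second, your route to (i) --- descending $K_G\simQ 0$ along the generically finite map $\sigma\colon G\to S$ --- cannot work as sketched when $\sigma$ is inseparable, and deferring to ``running the analysis of Theorem~\ref{thm:can-bd-g0}'' does not fill the hole: that theorem's hypotheses concern the fibration $f\colon X\to S$ with $K_X+M_0+\Delta\simQ f^*D$, not a generically finite map from a fibre of $g$. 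Concretely, if $G$ is abelian and $\sigma$ is purely inseparable of height one, Proposition~\ref{prop:can-foliation} gives $\sigma^*K_S\sim K_G-(p-1)\det\mathcal F_{G/S}$, and $\det\mathcal F_{G/S}\preceq 0$ because $\mathcal F_{G/S}$ is a subsheaf of the trivial bundle $\mathcal T_G$ (Lemma~\ref{trivibdl}); so you only get $\sigma^*K_S\succeq_{\mathbb Q}0$, the wrong direction, and indeed quotients of abelian varieties by non-smooth foliations can have positive Kodaira dimension --- controlling exactly this is why the paper later needs Lemma~\ref{lem:fol-fibr} in the proof of Theorem~\ref{thm:S-abelian}. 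The paper avoids the issue entirely by never descending from $G$: it extracts $K_S\simQ 0$ from the canonical bundle formula for $f$ itself.
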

		\begin{proof}
		Let $M \in \mathfrak M$ be a general divisor and write $M = T + M' + V$ where $T$ is a horizontal irreducible component, $V$ is the vertical part, and $M'$ is the remaining part.
		Since $\deg_{K(S)} M_0 \leq 2$, we have $M'=0$, $M'=T$, or $M'$ is another horizontal component.
		Since $T$ is dominant over $S$, it is of m.A.d., and we have $K_{T^\nu} \geQ 0$.
		We have $(M'+ V)|_{T^\nu} \geQ 0$ by Lemma~\ref{lem:4K52}.
		
		\smallskip
		For convenience, we first establish a lemma.
		\begin{lem}\label{lem:V=0} With the notation above, we have $V=0$, $M'|_{T^\nu} \simQ \Delta|_{T^\nu}\simQ  0$, $(K_{X} + M +\Delta)|_{T^\nu} \equiv 0$, and $T = T^\nu$ is isomorphic to an abelian variety.
		\end{lem}
		\begin{proof}[Proof of the lemma]
			We first remark that
			\begin{itemize}
				\item[$(*)$]
				Suppose $V \neq 0$, as $M$ varies in $\mathfrak M$ we get a family of numerically equivalent horizontal divisors $\mathfrak T$ and a family of numerically equivalent vertical divisors $\mathfrak V$, which contains $T$ and $V$ respectively. Both of the families of divisors cover $X$, therefore, $\Supp T \cap \Supp V \neq \emptyset$.
			\end{itemize}
			By the adjunction formula, we have
			\begin{equation}\label{eq:WLTT}
				(K_{X} + M +\Delta)|_{T^\nu} = (K_{X} + T+ (M'+V)+ \Delta)|_{T^\nu} \simQ K_{T^\nu} + \Delta_{T^\nu} + (M'+V)|_{T^\nu} + \Delta|_{T^\nu},
			\end{equation}
			which is anti-nef.
			Since
			$K_{T^\nu}, \Delta_{T^\nu}, (M'+ V)|_{T^\nu}$
			and
			$\Delta|_{T^\nu}$ (note that $T\wedge \Delta=0$ for a general $M$)
			are $\mathbb Q$-effective, we see that
			$$(K_X + M+\Delta)|_{T^\nu} \equiv 0,\quad K_{T^\nu} \simQ (M' + V)|_{T^\nu}\simQ 0 \text{ \ and \ } \Delta_{T^\nu}  = \Delta|_{T^\nu} = 0.$$
			
			We show that $V|_{T^\nu} \simQ 0$ by distinguishing the following two cases:
			\begin{itemize}
				\item[1.] If $M'\neq T$, then $M'|_{T^\nu} \geQ 0$ and thus $V|_{T^\nu} \simQ 0$.
				\item[2.] If $M'=T$, we can take another element $M_1\in|M_0|$ so that $T \not\subseteq M_1$. Then by
				\[
				0 \simQ (2T+2V)|_{T^\nu} \sim (M_1 + V)|_{T^\nu} \simQ M_1|_{T^\nu} + V|_{T^\nu} \geQ 0
				\]
				it follows that $V|_{T^\nu} \simQ 0$.
			\end{itemize}

			We conclude that
			\begin{itemize}
				\item
				$\Supp T\cap \Supp V =\varnothing$, hence $(*)$ implies that a general $M \in \mathfrak M$ has no vertical part;
				\item
				by Lemma~\ref{lem:adjunction} and Proposition~\ref{prop:char-abel-var}, $T = T^\nu$ is isomorphic to an abelian variety;
				\item
				$(K_{X} + M +\Delta)|_{M} \equiv 0$.
			\end{itemize}
			This completes the proof of the lemma.
		\end{proof}
		
		We proceed with the proof of the proposition.
		We first prove the assertions (i) and (ii) by dividing the argument into two cases based on whether $\deg_{K(S)} (K_X + M_0 + \Delta)$ equals zero or not.
		
		In the following we take a general divisor $M \in \mathfrak M$.
		Remark that to prove that $M_0$ is semi-ample, it suffices to verify that on any irreducible component $T$ of $M$ the restriction $ M|_{T} \equiv 0$, which implies that $|M_0|$ is base-point-free with $\nu(M_0) =1$.
		\medskip

		Case\thinspace (1): $\deg_{K(S)} (K_X + M_0 + \Delta) <0$. In this case $\deg_{K(S)} M_0 =1$, a general $M\in |M_0|$ is reduced and irreducible,  and $(X_{K(S)}, \Delta_{K(S)})$ is klt. Let $ N = -(K_{X} + M +\Delta)$. Then $ N|_{M^\nu} \equiv 0$ by Lemma~\ref{lem:V=0}.
		
		Take an ample divisor $H$ on $S$. For any rational number $\epsilon >0$, the divisor $N+ \epsilon f^*H$ is nef and big, thus $\mathbb Q$-effective.
		Therefore
		$$  K_X+ M +\Delta+ \text{(effective)} \sim_{\mathbb{Q}} \epsilon f^*H.$$
		Thus, by Theorem~\ref{thm:can-bd-g0}, we have $\epsilon H\geQ \frac12 K_{S}$.
		But $K_S$ is $\mathbb Q$-effective by Proposition~\ref{prop:char-abel-var} (1), thus letting $\epsilon\to0$, we obtain $K_S\simQ 0$.
		Therefore, $S$ is an abelian variety by Proposition~\ref{prop:char-abel-var} (3).
		
		Next, we show that $M_0$ is semi-ample by verifying that the restriction  $ M|_{M} \equiv 0$.
		Take a surface $W$ which is the intersection of $\dim X -2$ very general hyperplane sections on $X$ intersecting the base locus of $|M|$ properly. It suffices to show that $\nu(M|_W) =1$. 
		Otherwise, we have $(M |_W)^2>0$, since both $M|_W$ and $N|_W$ are nef but not numerically trivial and $( M |_W)\cdot ( N |_W)=0$ by Lemma \ref{lem:V=0}, Hodge Index Theorem implies that $( N|_W)^2 <0$, which contradicts the fact that $ N|_W$ is nef.
		
		\medskip
		Case\thinspace (2): $\deg_{K(S)} (K_X + M_0 + \Delta) = 0$.
		
		Since $(K_X + M_0 + \Delta)|_{X_{K(S)}}\simQ0$, by Lemma~\ref{lem:relativetrivial} there exists a big open subset $S^{\circ}\subseteq S$ and a pseudo-effective $\mathbb{Q}$-Cartier $\mathbb{Q}$-divisor $D$ on $S$ such that $-(K_X + M_0 + \Delta)|_{X_{S^{\circ}}}\sim_{\mathbb{Q}} f^*D^\circ$, where $D^\circ:=D|_{S^\circ}$.
		By Theorem~\ref{thm:can-bd-g0}, we have $-D^\circ \geQ \frac12 K_{S^{\circ}}$.
		On the other hand, since $S$ is of m.A.d., $K_{S^{\circ}} \geQ 0$.  We conclude that $K_S \simQ 0$ which implies that $S$ is an abelian variety.

		Next, we prove that $M_0$ is semi-ample. With the notation at the beginning $M= T+M'$, by Lemma~\ref{lem:V=0}, $M'|_{T} \equiv 0$, thus we only need to show that
		$T|_{T} \equiv 0$. We divide the argument into the following cases.
		\smallskip
		
		Case\thinspace (2.I): $M' \neq 0$. If $M'= T$ then we are done since $M'|_{T^\nu}\equiv 0$. Assume $M' \neq T$, then $M' \cap T = \emptyset$.  We apply a similar strategy as in Case (1). Take a surface $W$ which is the intersection of $\dim X -2$ very general hyperplane sections on $X$ intersecting the base locus of $M$ properly. Since both $M'|_W$ and $T|_W$ are nef but not numerically trivial and $(M'|_W)\cdot (T|_W)=0$, we can apply Hodge Index Theorem to show that 
		$(T|_W)^2 = (M'|_W)^2=0$ as in  in Case (1). This indicates $T|_T \equiv 0$.
		
		\smallskip
		In the remaining cases, we assume that $M$ is reduced and irreducible.
		
		\smallskip
		Case\thinspace (2.II): $\deg_{K(S)} M_0=1$. In this case, $\deg_{K(S)} \Delta> 0$, thus $\Supp\Delta$ must contain a horizontal component $T_1$. By Lemma~\ref{lem:V=0}, we know that $T_1$ does not intersect a general $M$, but the family of divisors in $|M_0|$ covers $X$, thus $T_1$ is a component of certain $M_1 \in |M_0|$ and $\deg_{K(S)} T_1 =1$. We may write that
		$$M_1 = T_1+ V_1\ \ \mathrm{and}\ \ \Delta = aT_1+ \Delta',\quad 0<a\le 1$$
		where $V_1$ is a vertical divisor and $a$ is the coefficient of $T_1$ in $\Delta$.
		Then by the adjunction formula, we have
		\begin{equation}\label{eq:0ZSI}
			\begin{aligned}
				(K_X + M_1+ \Delta)|_{T^\nu_1} &\simQ (K_X + T_1 + aM_1 + (1-a)V_1 +  \Delta' )|_{T^\nu_1} \\
				&\simQ K_{T^\nu_1} + \Delta_{T^\nu_1}+ aM_1|_{T^\nu_1} + (1-a)V_1|_{T^\nu_1} +  \Delta'|_{T^\nu_1} .
			\end{aligned}
		\end{equation}
		Since $K_{T^\nu_1}, M_1|_{T_1^\nu}, V_1|_{T^\nu_1},\Delta'|_{T_1^\nu} \geQ 0$ and $K_X + M_1+ \Delta$ is anti-nef, we conclude that
		$$(K_X + M_1+ \Delta)|_{T^\nu_1}\equiv0,\quad K_{T^\nu_1} \simQ \Delta_{T^\nu_1} \simQ  M_1|_{T^\nu_1} \simQ (1-a)V_1|_{T^\nu_1} \simQ \Delta'|_{T^\nu_1} \simQ 0.$$
		Since $M_1=T_1+V_1$ is connected by Lemma~\ref{lem:conn}, thus if $a<1$ we obtain $V_1=0$ by $V_1|_{T^\nu_1} \simQ 0$; and if $a=1$, then by $M|_{T^\nu_1} \sim_{\mathbb{Q}} (T_1+V_1)|_{T^\nu_1} \sim_{\mathbb{Q}} 0$ and by the assumption that $T_1|_{T^\nu_1}$ is pseudo-effective, we also have $V_1|_{T^\nu_1} \simQ 0$ and thus $V_1=0$. In turn we conclude that $M_1 = T_1$ and $M|_{T_1^\nu} \simQ 0$.
		Therefore $|M|$ is semi-ample with $\nu(M)=1$.
		\smallskip
		
		Case\thinspace (2.III): $\deg_{K(S)} M_0 =2$. We fall into one of the following two cases:
		\begin{itemize}[left=5em..6em]
			\item[(III-1)]
			The generic fiber $X_{K(S)}$ is not geometrically normal (this can happen only when $p=2$), that is, $X_{K(S)}$ is a non-smooth conic over $K(S)$.
			\item[(III-2)]
			$X_{K(S)}$ is smooth over $K(S)$.
		\end{itemize}
		Fix a general divisor $M_1\in |M_0|$, which is a prime divisor.  By Lemma~\ref{lem:V=0}, $M_1$ is an abelian variety.
		Let us rename it by $S':= M_1$ and consider the base change via the degree two morphism $S'\to S$.
		Note that in each of the above two cases, $X_{K(S')}$ is integral by Proposition~\ref{prop:ga0} or \cite[Lemma~1.3]{Sch10}.
		Let $\nu\colon X_1:=(X_{S'})^{\nu}\to X_{S'}$ be the normalization morphism. Let $f_1\colon X_1 \to S_1$ be the fibration arising from the Stein factorization of $X_1\to S'$. We have the following commutative diagram
		$$\xymatrix{
			&X_1 \ar[d]^{f_1}\ar[r]^<<<<{\nu}\ar@/^7mm/[rr]|{\;\pi\;} &X_{S'}\ar[r]^{\pi'}\ar[d] &X\ar[d]^{f}\\
			&S_1\ar[r]   &S'\ar[r] &S\rlap. \\
		}$$
		
		In case~(III-1), by Proposition~\ref{prop:ga0} (3), $X_{S'}$ is  not normal along
		$\pi'^{-1}M_1$, and $S_1 \to S'$ is a finite purely inseparable morphism of degree two.
		Moreover, if we denote by $T_1$ the prime divisor such that $T_1 = \Supp \pi^*M_1$, then $T_1$ is a birational section over $S_1$ and $\pi^*M_1 = T_1$.
		We may write that
		$$\pi^*K_{X} \sim K_{X_1} + T_1 + V_1,$$
		where $V_1 \geq 0$ (by Proposition~\ref{prop:JiWal-refined-version}) is a vertical divisor over $S_1$. Hence
		$$\pi^*(K_{X} + M + \Delta) \simQ K_{X_1} + T_1 + \pi^*M +  V_1 + \pi^*\Delta .$$
		Applying the adjunction formula on $T_1$, we have 
		$$(K_{X} + M + \Delta) |_{T^\nu_1} \simQ (K_{X_1} + T_1 + \pi^*M + V_1 + \pi^*\Delta)|_{T^\nu_1} \sim_\mathbb Q  K_{T^\nu_1} + \Delta_{T^\nu_1} +  (\pi^*M + V_1 + \pi^*\Delta)|_{T^\nu_1}.$$
		Since $K_{X} + M + \Delta$ is anti-nef and $K_{T^\nu_1}, \Delta_{T^\nu_1},  \pi^*M|_{T^\nu_1},  (V_1 + \pi^*\Delta)|_{T^\nu_1}\ge 0$, we conclude that  $\pi^*M|_{T^\nu_1}=0$. Since $\pi^*M\sim T_1$, we have $T_1|_{T_1^{\nu}} \geQ 0$, which implies that $M|_M \equiv 0$.
		
		In case~(III-2), we have $S_1=S'$ and
		$$\pi^*K_{X} \sim K_{X_1} + V_1$$
		for some effective divisor  $V_1$ vertical over $S_1$.
		
		If $M_1 \to S$ is purely inseparable,  then $\pi^*M_1 = 2T_1$ where $T_1$ is a birational section over $S_1$.
		Applying the adjunction formula, we have
		$$(K_{X} + M + \Delta) |_{T^\nu_1} \simQ (K_{X_1} + T_1 + T_1 + V_1 + \pi^*\Delta)|_{T^\nu_1} \sim_\mathbb Q  K_{T^\nu_1} + \Delta_{T^\nu_1} +  (T_1 + V_1 + \pi^*\Delta)|_{T^\nu_1}.$$
		Since $T_1|_{T^\nu_1} \sim_{\mathbb{Q}} \frac{1}{2}\pi^*M_1|_{T^\nu_1} \geQ 0$, we conclude as before that $\pi^*M|_{T^\nu_1} \sim_{\mathbb{Q}} 0$, which implies that $M|_{M} \sim_{\mathbb{Q}} 0$.
		
		Now consider the case that for general $M \in |M_0|$, $M \to S$ is a separable morphism of degree two. Since $M$ is isomorphic to an abelian variety, the natural morphism $M \to S$ is an \'{e}tale morphism of abelian varieties. Note that
		\begin{itemize}
			\item
			up to isomorphism, there are only finitely many abelian varieties  \'{e}tale over $S$ of degree two;
			\item
			if $M\in |M_0|$ is birationally equivalent to $M_1$ then $K(M_1)\otimes_{K(S)} K(M) \cong K(M)\times K(M)$, hence $\pi^*M$ splits into two distinct components.
		\end{itemize}
		From this we conclude that general $M$ belongs to the same birational equivalent class, and $\pi^*M$ splits into the sum of two divisors $T_1 + T_2$ which varies as $M$ varies.
		We may consider the fibration $X_{S'} \to S'$ and apply the argument of Case~(2.I) to conclude that $M|_{M}\equiv 0$.
		\medskip
		
		We now prove the statement (iii). 
		By (ii), the linear system $|M_0|$ induces a fibration $g\colon X\to C$ onto a smooth curve $C$.
		Since the generic fiber $X_\eta$ of $f\colon X\to S$ has arithmetic genus zero and is dominant over $C$, the curve $C$ is isomorphic to $\mathbb P^1$.
		\medskip
		
		Finally, let us prove the statement (iv). 
		Denote by $G_t$ the fiber of $g\colon X\to \mathbb P^1$ over $t\in\mathbb P^1$. 
		Let $T$ be an $f$-horizontal component of $G_t$. Write that 
		\[
		G_t = aT + G_t' \text{ \ and \ } \Delta = bT + \Delta'
		\]
		with $(G_t' + \Delta')\wedge T = 0$, where $0\le b\le 1$.

		If $b<1$, setting $c = (1-b)/a$, we have
		\[ 
		(K_X + cG_t + \Delta)|_{T^\nu} = (K_X + T + cG_t' + \Delta')|_{T^\nu} \simQ  K_{T^\nu} + \Delta_{T^\nu} + cG_t'|_{T^\nu} + \Delta'|_{T^\nu},
		\]
		which is anti-nef since $G_t|_{T^\nu} \sim 0$.
		Since $\Delta_{T^\nu},cG_t'|_{T^\nu},\Delta'|_{T^\nu},K_{T^\nu} \geQ0$, it follows that $\Delta_{T^\nu}=G_t'|_{T^\nu}=\Delta'|_{T^\nu}=0$ and $K_{T^\nu} \equiv 0$.
		By Lemma~\ref{lem:adjunction} and Proposition~\ref{prop:char-abel-var}, $T$ is normal and isomorphic to an abelian variety. 
		Moreover, as $c>0$, we have $G_t' |_{ T } \simQ 0$, which implies $G_t' = 0$ since $G_t$ is connected. Therefore, $G_t$ is isomorphic to either an abelian variety or a multiple of an abelian variety.
		By the above argument, we have $\Delta|_{T} \equiv 0$; in turn we conclude that $\mathrm{Supp}\, \Delta$ is contained in finitely many closed fibers of $g$.
		Moreover, since $g$ is a fibration onto a curve, the generic fiber of $g$ is geometrically reduced (see \cite[Corollary~2.5]{Sch10}). 
		Therefore, a general fiber $G_t$ of $g$ is reduced and thus is isomorphic to an abelian variety.

		If $b=1$, then by assumption, $T|_{T^\nu}$ is pseudo-effective, and thus so is $\Delta|_{T^\nu}$. Applying the adjunction formula, we have
		\[ 
		(K_X + G_t + \Delta)|_{T^\nu} = (K_X + T + G_t' + \Delta)|_{T^\nu} \simQ  K_{T^\nu} + \Delta_{T^\nu} + G_t'|_{T^\nu} + \Delta|_{T^\nu}.
		\]
		By a similar argument as in the case $b<1$, we can finish the proof.
		\end{proof}

		\section{Canonical bundle formula for separable fibrations}
		Throughout this section, we work over an algebraically closed field $k$ of characteristic $p>0$. We aim to deduce a canonical bundle formula for a separable fibration. We first treat a general case and obtain the following theorem, which can be regarded as an addendum of Witaszek's result (Theorem~\ref{thm:Wit}).
		
		\begin{thm}\label{thm:red-bach}
		Let $f\colon X\to S$ be a fibration of relative dimension one between normal quasi-projective varieties, where
		$X$ is $\mathbb Q$-factorial.  Let $\Delta$ be an effective $\mathbb{Q}$-divisor on $X$. Let $\tau_1\colon S_1 \to S$ be a finite purely inseparable morphism of height one with $S_1$ normal. Assume that
		\begin{enumerate}[\rm(C1)]
			\item $(X_{K(S)}, \Delta_{K(S)})$ is lc;
			\item $K_X+\Delta \sim_{\mathbb{Q}} f^*D$ for some $\mathbb{Q}$-Cartier $\mathbb{Q}$-divisor $D$ on $S$; and
			\item $X_{K(S_1)}$ is reduced but not normal (which happens only when $p=2,3$).
		\end{enumerate}
		Then there exist finite morphisms $\tau\colon \bar{T} \to S$, $\tau'\colon \bar{T}' \to \bar{T}$ and $\tau_1'\colon \bar{T}' \to S_1$ fitting into the following commutative diagram
		$$ \xymatrix@R=3.5ex@C=4ex{ &\bar{T}' \ar[r]^{\tau'} \ar[d]^{\tau'_1}   &\bar{T}\ar[d]^{\tau} \\
			&S_1\ar[r]^{\tau_1}  &S
		}
		$$
		and an effective $\mathbb{Q}$-divisor $E_{\bar{T}'}$ on $\bar{T}'$ such that
		$$(\tau_1\circ\tau_1')^*D \sim_{\mathbb{Q}} a K_{\bar{T}'} + b\tau'^*K_{\bar{T}} + c\tau_1'^*(\tau_1^*K_S - K_{S_1}) + E_{\bar{T}'}$$
		where $a,b, c \geq 0$ are rational numbers relying on the coefficients of $\Delta_{K(S)}$.
		
		The finite morphisms $\tau'$, $\tau_1$ are purely inseparable, and $\tau$, $\tau_1'$ are also purely inseparable if $f$ is separable. Moreover,
		if $(X_{K(S)}, \Delta_{K(S)})$ is klt, then $c\ge\frac{1-\theta}{p(p-1)}>0$ where $\theta$ is the maximum of the coefficients of $\Delta_{K(S)}$.
		\end{thm}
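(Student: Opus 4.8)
The plan is to realise the tower $\bar T'\to\bar T\to S$ as normalizations of the horizontal ``non-smooth/conductor'' divisors, and then to feed the base-change identity of Proposition~\ref{prop:compds} and the log-adjunction of Lemma~\ref{lem:log-adj} into one another.

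\textbf{Construction.} Condition (C3) forces $X_{K(S)}$ to be non-smooth over $K(S)$: otherwise $X_{K(S_1)}=X_{K(S)}\otimes_{K(S)}K(S_1)$ would be smooth, hence normal. By Propositions~\ref{prop:basic} and~\ref{prop:ga1-geo-reduced-p=2} (and the geometrically non-reduced analogue) the non-smooth locus of $X_{K(S)}$ is a single closed point; let $T\subseteq X$ be its closure, a horizontal prime divisor, and put $\bar T:=T^\nu$ with $\tau\colon\bar T\to S$. Set $Y:=(X_{S_1})^\nu$, with $\pi\colon Y\to X$ and $g\colon Y\to S_1$; by (C3) and Proposition~\ref{prop:basic-fib} the fibre $X_{K(S_1)}$ is integral, so $Y$ is a variety. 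Since $\pi$ is a homeomorphism, $\pi^{-1}(T)$ is a single prime divisor $T_1$ and $\pi^*T=mT_1$ for some integer $m\ge1$ with no vertical part; put $\bar T':=T_1^\nu$, with $\tau_1'\colon\bar T'\to S_1$ the restriction of $g$ and $\tau'\colon\bar T'\to\bar T$ induced by $\pi|_{T_1}$, so that $\tau\circ\tau'=\tau_1\circ\tau_1'$. The maps $\tau_1$ and $\pi$, hence $\tau'$, are purely inseparable of height one; if $f$ is separable then $X_{K(S)}$ is geometrically reduced (Proposition~\ref{prop:basic-fib}) and the non-smooth point has purely inseparable residue field over $K(S)$ (Proposition~\ref{prop:ga1-geo-reduced-p=2}), so $\tau$ and $\tau_1'$ are purely inseparable too.

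\textbf{Two identities.} After shrinking $S$ to a big open subset over which $f$ is equidimensional, Proposition~\ref{prop:compds} gives $\pi^*K_{X/S}\simQ K_{Y/S_1}+(p-1)E+N$ with $E\ge0$ supported on the conductor and $N$ a $g$-exceptional correction. Using $K_{Y/S_1}=K_Y-g^*K_{S_1}$ and hypothesis (C2),
\[
  g^*\tau_1^*D\simQ\pi^*(K_X+\Delta)\simQ K_Y+g^*(\tau_1^*K_S-K_{S_1})+(p-1)E+N+\pi^*\Delta .
\]
Writing $\alpha$ for the coefficient of $T$ in $\Delta$ (so $\alpha\le\theta$) and $c_0$ for the conductor multiplicity read off from Section~\ref{sec:g=1}, the coefficient of $T_1$ on the right is $\mu=(p-1)c_0+\alpha m$. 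The second identity is Lemma~\ref{lem:log-adj} for $T\subseteq X$, whose restriction $(K_X+\Delta)|_{\bar T}\simQ\tau^*D$ reads
\[
  (1-\alpha)\,T|_{\bar T}\simQ K_{\bar T}+B_{\bar T}-\tau^*D,\qquad B_{\bar T}\ge0 .
\]

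\textbf{Restriction and the self-referential substitution.} I would restrict the first identity to $\bar T'$, splitting $K_Y+\mu T_1=(K_Y+T_1)+(\mu-1)T_1$ and applying adjunction $(K_Y+T_1)|_{\bar T'}\simQ K_{\bar T'}+\Delta_{\bar T'}$, noting $g^*(\tau_1^*K_S-K_{S_1})|_{\bar T'}=\tau_1'^*(\tau_1^*K_S-K_{S_1})$ and that the vertical pieces, the remaining horizontal part of $\pi^*\Delta$, and $\Delta_{\bar T'},B_{\bar T}$ restrict to effective divisors. The one delicate term is $(\mu-1)T_1|_{\bar T'}$; because $\pi^*T=mT_1$ has no vertical part one has the clean relation $\tau'^*(T|_{\bar T})=m\,T_1|_{\bar T'}$, and substituting the log-adjunction identity pulled back by $\tau'$ (using $\tau'^*\tau^*D=(\tau_1\circ\tau_1')^*D$) re-expresses it through $\tau'^*K_{\bar T}$, effective terms, and a multiple of $(\tau_1\circ\tau_1')^*D$ itself. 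Collecting the $(\tau_1\circ\tau_1')^*D$ contributions on the left with total coefficient $\lambda:=1+\tfrac{\mu-1}{m(1-\alpha)}$ and dividing yields
\[
  (\tau_1\circ\tau_1')^*D\simQ a\,K_{\bar T'}+b\,\tau'^*K_{\bar T}+c\,\tau_1'^*(\tau_1^*K_S-K_{S_1})+E_{\bar T'},
\]
with $E_{\bar T'}\ge0$ and $a=c=\tfrac1\lambda$, $b=\tfrac{\mu-1}{\lambda\,m(1-\alpha)}$, all expressed through $\alpha,m,c_0$, i.e.\ through $p$ and the coefficients of $\Delta_{K(S)}$. (In the lc case one may have $\alpha=1$, where the log-adjunction degenerates into plain adjunction and $c=0$ is permitted; the klt hypothesis gives $\alpha<1$, so the substitution is valid.)

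\textbf{The klt bound and the obstacle.} In the klt case $\alpha\le\theta<1$, and the $\alpha$ in the numerator and denominator cancels:
\[
  c=\frac{m(1-\alpha)}{m(1-\alpha)+\mu-1}=\frac{m(1-\alpha)}{m-1+(p-1)c_0}\ge\frac{m(1-\theta)}{m-1+(p-1)c_0}\ge\frac{1-\theta}{p(p-1)},
\]
the last inequality using $m\ge1$ together with the uniform bound $c_0\le p$ (indeed $c_0\le2$) that the curve classification of Section~\ref{sec:g=1} supplies. I expect the main obstacle to be the effectivity bookkeeping of the third step: cleanly separating vertical, effective, and exceptional contributions so that $E_{\bar T'}\ge0$, and in particular handling the $g$-exceptional correction $N$ restricted to $\bar T'$, which is only exceptional over $S$ rather than manifestly effective and must be arranged to vanish in the relevant codimension or absorbed via Proposition~\ref{prop:can-foliation}, all while keeping the coefficient of $(\tau_1\circ\tau_1')^*D$ exactly equal to $\lambda$ so that the explicit value of $c$, and hence the lower bound, survives.
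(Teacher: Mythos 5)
Your strategy is in essence the paper's own: combine the base-change formula of Proposition~\ref{prop:compds} with adjunction along a horizontal divisor and the log-adjunction identity of Lemma~\ref{lem:log-adj}, then eliminate the term $T_1|_{\bar T'}$. Your substitution is algebraically equivalent to the paper's linear combination of the two identities (the paper multiplies them by $(1-\alpha)\gamma_2$ and $(p-1)\gamma_1+\alpha\gamma_2-1$ and adds), and your coefficients agree exactly: your $c=\frac{m(1-\alpha)}{m-1+(p-1)c_0}$ is the paper's $\frac{(1-\alpha)\gamma_2}{\gamma_2+(p-1)\gamma_1-1}$, and your klt bound follows the same arithmetic. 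The genuine gap is at the foundation, in the construction of $T$. You define $T$ as the closure of ``the single non-smooth closed point'' of $X_{K(S)}$, invoking Propositions~\ref{prop:basic} and~\ref{prop:ga1-geo-reduced-p=2} ``and the geometrically non-reduced analogue''. That claim is true only when $f$ is separable. The theorem explicitly allows $f$ inseparable (its last clause says $\tau,\tau_1'$ are \emph{also} purely inseparable \emph{if} $f$ is separable), and the paper needs that case: Theorem~\ref{thm:insep-A} applies Theorem~\ref{thm:red-bach} to inseparable fibrations. When $f$ is inseparable, $X_{K(S)}$ is geometrically non-reduced (Proposition~\ref{prop:basic-fib}) and hence \emph{nowhere} smooth over $K(S)$: the non-smooth locus is the whole curve, there is no distinguished point, and no analogue of Proposition~\ref{prop:ga1-geo-reduced-p=2} can produce one. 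What is finite in all cases is the non-normal locus of $X_{K(S_1)}$ (cf.\ Proposition~\ref{prop:curve-nonreduced}), and that is what the paper uses: it takes $T'$ (your $T_1$) to be a horizontal component of the conductor of $(X_{S_1})^\nu\to X_{S_1}$ and only then sets $T:=\pi(T')$. Your own bookkeeping requires exactly this choice: $T_1$ must appear in $E$ with coefficient $c_0\ge1$, otherwise $\mu\ge1$ can fail, and with it $b\ge0$ and the effectivity of $E_{\bar T'}$. Once the construction is made the paper's way, the rest of your computation goes through.

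The issues you flag as the ``main obstacle'' are real, and the paper resolves them by a device your setup lacks. Your models $\bar T:=T^\nu$ and $\bar T':=T_1^\nu$ are only generically finite (projective) over $S$ and $S_1$, whereas the theorem asserts \emph{finite} morphisms; the paper instead defines $\bar T,\bar T'$ as the normalizations of $S$ in $K(T^\nu)$, $K(T'^\nu)$ (i.e.\ Stein factorizations), so that $\tau,\tau'\circ\tau_1'$ are finite, and pushes the divisor relation down from $T'^\nu$ via the birational morphism $\sigma\colon T'^\nu\to\bar T'$, which annihilates every divisor exceptional over $S$ --- in particular the correction $N$, after first shrinking $S$ to a big regular open subset where one may take $N=0$. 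This works because for the finite covers the preimage of a big open subset of $S$ is again a big open subset, so a $\mathbb Q$-linear equivalence established there extends to all of $\bar T'$; on the non-finite model $T_1^\nu$ this extension argument is unavailable, which is precisely why the exceptional contributions cannot simply be ``arranged to vanish'' there.
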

		
		\begin{proof}
		Since the statements involve only codimension one points, we may assume that $S$ and $S_1$ are both regular.
		Here, once $\bar T$ is constructed over an open subset of $S$, we can replace it with the normalization of $S$ in $K(\bar T)$.
		This ensures that $\tau$ is finite. The same applies to $\bar T'$ and $\tau'_1$.

		Let $\nu\colon X'\to X_{S_1}$ be the normalization morphism. Since $X_{K(S_1)}$ is not normal, we can find an $f$-horizontal irreducible component $T'$ on $X'$ of the conductor divisor. Denote by $\pi\colon X'\to X$ the induced morphism, which is purely inseparable of height one. Let $f'\colon X' \to S'$ be the fibration arising from the Stein factorization of $X' \to S_1$. These varieties fit into the following commutative diagram
		\[
		\xymatrix{&T'\ar[r] &X'\ar[d]^{f'}\ar[rd]^{f_1}\ar@/^7mm/[rr]|{\;\pi\;}\ar[r]^{\nu} &X_{S_1} \ar[d]\ar[r] &X\ar[d]^f &T:=\pi(T')\ar[l]\\
			&  &S'\ar[r] &S_1\ar[r]^{\tau_1}   &S  &
		}\]
		
		
		By Proposition~\ref{prop:compds}, we have
		\begin{equation}\label{eq:can-sec5}
			\pi^*(K_X + \Delta) \simQ K_{X'}+ (p-1)(\gamma_1T' + E') + \pi^*\Delta + f_1^*(\tau_1^*K_S - K_{S_1}) + N,
		\end{equation}
		where $E'$ is effective, $T'\wedge E'=0$ and $N$ is exceptional over $S$.
		Since $p_a(X'_{K(S')})=0$, we have $\deg_{K(S')}(p-1)(\gamma_1 T' + E' + \pi^* \Delta) = 2$, thus $\deg_{K(S')}T'=\gamma_1=1$ if $p=3$ and  $\gamma_1\cdot\deg_{K(S')} T'  \le 2$  if $p=2$.
		We may assume $N=0$ since it does not affect our result.
		
		Let $\rho\colon T'^\nu \to T^\nu$ be the morphism arising from the normalization of $T'\to T$.  We may write $\pi^*T =  \gamma_2T'$ where $\gamma_2 = 1$ or $p$.  Write $\Delta= \alpha T+\Delta'$ with $T\wedge \Delta'=0$. 
		By the adjunction formula there exists an effective $\mathbb{Q}$-divisor $\Delta_{T'^\nu}$ on $X'$ such that
		$(K_{X'} + T')|_{T'^\nu}  \sim_{\mathbb{Q}} K_{T'^\nu} + \Delta_{T'^\nu}$.
		By the relation (\ref{eq:can-sec5}) we have
		\begin{equation}\label{eq:can-sec6-1}
			\begin{split}
				\pi^*(K_X+\Delta)|_{T'^\nu}
				&{}\sim_{\mathbb{Q}} (K_{X'} + T')|_{T'^\nu} +  \bigl((p-1)\gamma_1+\alpha  \gamma_2-1\bigr)T'|_{T'^\nu} \\
				& \quad \quad+\bigl((p-1)E'+\pi^*\Delta'\bigr)|_{T'^\nu} + f_1^*(\tau_1^*K_S - K_{S_1})|_{T'^\nu}\\
				&{}\sim_{\mathbb{Q}} K_{T'^\nu} +  \Delta_{T'^\nu} +  \bigl((p-1)\gamma_1+\alpha  \gamma_2-1\bigr)T'|_{T'^\nu} \\
				&\quad \quad+\bigl((p-1)E'+\pi^*\Delta'\bigr)|_{T'^\nu} + f_1^*(\tau_1^*K_S - K_{S_1})|_{T'^\nu}.
			\end{split}
		\end{equation}
		By Lemma~\ref{lem:log-adj}, there exists an effective divisor $B_{T^\nu}$ on $T^\nu$ such that
		\begin{align}\label{eq:can-sec6-2}
			(1-\alpha )\gamma_2T'|_{T'^\nu} \sim (1-\alpha )\rho^*(T|_{T^\nu}) \simQ \rho^*(K_{T^\nu} + B_{T^\nu})|_{T'^\nu} - \pi^*(K_X+\Delta)|_{T'^\nu}.
		\end{align}
		Multiplying the equations (\ref{eq:can-sec6-1}) and (\ref{eq:can-sec6-2}) by $(1-\alpha )\gamma_2$ and $\bigl((p-1)\gamma_1+\alpha  \gamma_2-1\bigr)$ respectively and then summing up, we obtain
		\begin{equation}\label{eq:res-div}
			\begin{split}
				&\bigl((1-\alpha )\gamma_2+ (p-1)\gamma_1 + \alpha \gamma_2-1\bigr)\pi^*(K_X+\Delta)|_{T'^\nu} \\
				&\hbox to 8em{}\sim_{\mathbb{Q}} (1-\alpha )\gamma_2K_{T'^\nu} + \bigl((p-1)\gamma_1+ \alpha  \gamma_2-1\bigr)\rho^*K_{T^\nu} \\
				&\hbox to 10em{} + (1-\alpha )\gamma_2f_1^*(\tau_1^*K_S - K_{S_1})|_{T'^\nu} +E_{T'^\nu},
			\end{split}
		\end{equation}
		where $E_{T'^\nu}:= (1-\alpha )\gamma_2\Delta_{T'^\nu} + (1-\alpha )\gamma_2\bigl((p-1)E'+\pi^*\Delta'\bigr)|_{T'^\nu} 
		+ \bigl((p-1)\gamma_1+\alpha  \gamma_2-1\bigr)\rho^*B_{T^\nu}\ge 0$.
		
		Finally we denote by $\bar{T}, \bar{T}'$ the normalization of $S$ in $T^\nu, T'^\nu$ respectively. By the construction the varieties $\bar{T}, \bar{T}', S_1, S$ fit into the commutative diagram 	$$
		\xymatrix{&T'^\nu\ar[r]^{\sigma} &\bar{T}' \ar[r]^{\tau'} \ar[d]^{\tau'_1}   &\bar{T}\ar[d]^{\tau} &T^\nu\ar[l]\\
			& &S_1\ar[r]^{\tau_1}  &S  &
		}
		$$
		as claimed in the theorem.
		We push down the relation (\ref{eq:res-div}) via $\sigma$ to $\bar{T}'$ and obtain the relation
		\begin{multline*}
			(\tau_1'\circ\tau_1)^*\bigl( \gamma_2 + (p-1)\gamma_1 -1 \bigr)D \\
			\sim_{\mathbb{Q}} (1-\alpha)\gamma_2K_{\bar{T}'} +\bigl((p-1)\gamma_1+\alpha \gamma_2-1\bigr)\tau'^*K_{\bar{T}} + (1-\alpha)\gamma_2\tau_1'^*(\tau_1^*K_S - K_{S_1}) + \sigma_*E_{T'^\nu}.
		\end{multline*}
		The rational numbers $a,b,c$ and the divisor $E_{\bar{T}'}$ are determined by the above equation.
		In particular,  if $\theta$ denotes the maximum of the coefficients of $\Delta_{K(S)}$, then we have
		$$c=\frac{(1-\alpha)\gamma_2}{ \gamma_2+ (p-1)\gamma_1 -1 }\ge \frac{1-\theta}{p(p-1)}.$$
		\medskip
		
		Moreover, if $f$ is a separable fibration, applying Proposition \ref{prop:ga1-geo-reduced-p=2}, we know that both $\tau$ and $\tau_1'$ are purely inseparable by construction.
		\end{proof}

		\begin{thm}\label{thm:sep-cb-formula}
		Let $f\colon X\to S$ be a separable fibration of relative dimension one between normal quasi-projective varieties, where $X$ is $\mathbb Q$-factorial.
		Let $\Delta$ be an effective $\mathbb{Q}$-divisor on $X$.
		Assume that
		\begin{enumerate}[\rm(C1)]
			\item $(X_{K(S)}, \Delta_{K(S)})$ is lc; and
			\item $K_X+\Delta\sim_{\mathbb{Q}} f^*D$ for some $\mathbb{Q}$-Cartier $\mathbb{Q}$-divisor $D$ on $S$.
		\end{enumerate}
		Then there exist finite purely inseparable morphisms $\tau_1\colon \bar{T} \to S$, $\tau_2\colon \bar{T}' \to \bar{T}$, an effective $\mathbb{Q}$-divisor $E_{\bar T'}$ on $\bar T'$ and rational numbers $a,b,c\geq 0$ such that
		$$\tau_2^* \tau_1^*D \sim_{\mathbb{Q}} a K_{\bar{T}'} + b\tau_2^*K_{\bar{T}} + c\tau_2^*\tau_1^*K_S + E_{\bar T'}.$$
		Moreover, if $(X_{K(S)},\Delta_{K(S)})$ is klt, then $c \geq c_0$ for some positive number $c_0$ relying only on the maximal coefficient of prime divisors in $\Delta_{K(S)}$.
		\end{thm}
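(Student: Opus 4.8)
The plan is to reduce the substantial case to Theorem~\ref{thm:red-bach}, after separating off the case of a smooth generic fibre. Since the statement is about codimension-one behaviour, I would first shrink $S$ to a regular big open subset. Restricting (C2) to the generic fibre gives $\deg_{K(S)}(K_{X_{K(S)}}+\Delta_{K(S)})=0$, whence $\deg_{K(S)}K_{X_{K(S)}}\in\{-2,0\}$ and $p_a(X_{K(S)})\in\{0,1\}$. As $f$ is separable, Proposition~\ref{prop:basic-fib} shows $X_{K(S)}$ is geometrically reduced, so by Proposition~\ref{prop:ga0} the arithmetic-genus-zero case is a smooth conic, while in the arithmetic-genus-one case $X_{K(S)}$ is a regular curve, either smooth or non-smooth.

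If $X_{K(S)}$ is smooth, then hypothesis (C3) of Theorem~\ref{thm:red-bach} cannot be arranged, since no purely inseparable base change makes a smooth fibre non-normal. Here I would instead invoke the classical canonical bundle formulas---Chen-Zhang~\cite{CZ15} for a smooth arithmetic-genus-one fibre and Cascini-Tanaka-Xu~\cite{CTX15} for the smooth conic (or Witaszek's Theorem~\ref{thm:Wit})---which, possibly after a purely inseparable base change $\tau_1\colon\bar T\to S$, give $\tau_1^*D\simQ K_{\bar T}+\Delta_{\bar T}$ with an effective moduli part $\Delta_{\bar T}$. Taking $\bar T'=\bar T$, $\tau_2=\mathrm{id}$, $a=b=0$, $c=1$ and $E_{\bar T'}=\Delta_{\bar T}$ then yields the asserted formula together with the klt lower bound at once.

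If $X_{K(S)}$ is regular but not smooth, then $p\in\{2,3\}$ and the analysis of Subsection~\ref{sec:curve-notation} supplies a height-one purely inseparable extension $K(S)\subset L$ with $X_L$ integral but not normal. Taking $S_1$ to be the normalisation of $S$ in $L$ yields a finite purely inseparable morphism of height one with $X_{K(S_1)}$ reduced but not normal, which is precisely hypothesis (C3). Theorem~\ref{thm:red-bach} (whose given morphism I also call $\tau_1\colon S_1\to S$) then produces $\tau\colon\bar T\to S$, $\tau'\colon\bar T'\to\bar T$, $\tau_1'\colon\bar T'\to S_1$; writing $\phi=\tau\circ\tau'=\tau_1\circ\tau_1'$ for the common composite $\bar T'\to S$, we get
\[
\phi^*D\simQ a K_{\bar T'}+b\,\tau'^*K_{\bar T}+c\,\tau_1'^*(\tau_1^*K_S-K_{S_1})+E_{\bar T'},
\]
and its proof moreover records $a=c$ and $a+b=1$, so $b=1-c$. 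Relabelling $\tau$ as $\tau_1$ and $\tau'$ as $\tau_2$ at the end will match the notation of the statement, so that $\tau'^*K_{\bar T}=\tau_2^*K_{\bar T}$ and $\phi^*K_S=\tau_2^*\tau_1^*K_S$.

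The decisive step, and the one I expect to be the main obstacle, is to eliminate $K_{S_1}$ and rewrite $c\,\tau_1'^*(\tau_1^*K_S-K_{S_1})$ as a nonnegative multiple of $\phi^*K_S$ plus an effective divisor. Using Proposition~\ref{prop:can-foliation} I would write $\tau_1^*K_S-K_{S_1}=(p-1)\det\Omega_{S_1/S}^1$ and, comparing the relative foliation discriminants along the two towers $\bar T'\to S_1\to S$ and $\bar T'\to\bar T\to S$, trade $\tau_1'^*K_{S_1}$ against $\tau_2^*K_{\bar T}$ and $\phi^*K_S$. The crux is controlling the signs: one must show that the discriminant attached to the base change $\bar T\to S$ trivialising the unique non-smooth point $\mathfrak p$ is effective---which I would extract from the explicit local description in Proposition~\ref{prop:ga1-geo-reduced-p=2}---so that $\phi^*K_S-\tau_2^*K_{\bar T}\geQ0$; combined with $a=c$ and $b=1-c$ this rearranges the formula into the required shape. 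The genuinely delicate point is the uniform klt bound: Theorem~\ref{thm:red-bach} supplies $c\ge\frac{1-\theta}{p(p-1)}>0$, but the conversion can let the resulting coefficient of $\phi^*K_S$ degenerate as $c\to1$, which by Proposition~\ref{prop:ga1-geo-reduced-p=2} happens only when $p=2$ and $[\kappa(\mathfrak p):K(S)]=4$. I would isolate that case and either choose $S_1$ more carefully or bound the excess discriminant directly against $E_{\bar T'}$, so as to keep the $\phi^*K_S$-coefficient bounded below by some $c_0=c_0(\theta)>0$.
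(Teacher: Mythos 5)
Your reduction to Theorem~\ref{thm:red-bach} is the right idea, but there is a genuine gap exactly at the step you yourself flag as the main obstacle: eliminating $K_{S_1}$. You take $S_1$ to be the normalization of $S$ in an intermediate extension $L$ supplied by \cite{Sch10}, and are then forced to control $\tau_1^*K_S-K_{S_1}\sim_{\mathbb Q}(p-1)\det\Omega^1_{S_1/S}$, a divisor with no positivity in general; your proposed remedies (``choose $S_1$ more carefully'', ``bound the excess discriminant directly against $E_{\bar T'}$'') are never carried out, and the uniform klt bound is left open. The paper avoids the difficulty entirely by fixing the choice $\tau_1=F_S\colon S_1=S^{\oneoverp}\to S$, the Frobenius of $S$: separability of $f$ guarantees that $X_{K(S_1)}$ is reduced, and it is not normal, since otherwise regularity would descend along the faithfully flat morphism $X_{K(S)^{1/p}}\to X_L$ and contradict the non-regularity of $X_L$; so hypothesis (C3) of Theorem~\ref{thm:red-bach} holds for this specific base change. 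The payoff is that $\tau_1^*K_S\sim_{\mathbb Q}pK_{S_1}$, hence $\tau_1^*K_S-K_{S_1}\sim_{\mathbb Q}\frac{p-1}{p}\tau_1^*K_S$ identically: the problematic term $c\,\tau_1'^*(\tau_1^*K_S-K_{S_1})$ is literally $\frac{(p-1)c}{p}$ times the pullback of $K_S$, no comparison between $K_{\bar T}$ and $K_S$ is needed (the target formula allows a separate $b\,\tau_2^*K_{\bar T}$ term, so your effectivity requirement $\phi^*K_S-\tau_2^*K_{\bar T}\succeq_{\mathbb Q}0$ is superfluous), and the klt bound survives as $\frac{p-1}{p}\cdot\frac{1-\theta}{p(p-1)}=\frac{1-\theta}{p^2}>0$, with no degeneration to worry about.

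A secondary, fixable error: your smooth-fibre case is internally inconsistent. Theorem~\ref{thm:Wit} gives $\tau^*D\sim_{\mathbb Q}t\tau^*K_S+(1-t)(K_T+\Delta_T)$, which matches the statement with $a=1-t$, $b=0$, $c=t$ and $E_{\bar T'}=(1-t)\Delta_T$; your stated output $\tau_1^*D\sim_{\mathbb Q}K_{\bar T}+\Delta_{\bar T}$ is incompatible with your chosen coefficients $a=b=0$, $c=1$, and read at face value (i.e.\ $t=0$) it would give $c=0$ and destroy the klt lower bound. The paper's treatment of this case is simply to apply Theorem~\ref{thm:Wit} and to note that Witaszek's argument gives $t\ge c_0>0$ when $(X_{K(S)},\Delta_{K(S)})$ is klt; no appeal to \cite{CZ15} or \cite{CTX15} is needed.
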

		\begin{proof}
		If the generic fiber of $f$ is smooth then we may apply \cite[Theorem~3.4]{Wit21}, which tells that there exists a finite purely inseparable morphism $\tau \colon T \to S$ such that
		$$\tau^*D \sim_{\mathbb{Q}} t\tau^*K_S + (1-t)(K_T+\Delta_T)$$
		for some rational number $t\in [0,1]$ and some effective $\mathbb{Q}$-divisor $\Delta_T$ on $T$. Here we remark that when $(X_{K(S)}, \Delta_{K(S)})$ is klt, the argument of \cite[Theorem~3.4]{Wit21} actually shows that $t\ge c_0 > 0$, where $c_0$ relies on the maximal coefficient of prime divisors in $\Delta_{K(S)}$. We may set $\bar{T}' = \bar{T} = T$ to get our assertion as a special case.
		
		Now assume that the generic fiber of $f$ is not smooth. Since $f$ is separable, if we set $\tau_1 = F_S\colon S_1 = S^{\oneoverp} \to S$, then $X_{K(S_1)}$ is integral but not smooth. We may apply Theorem~\ref{thm:red-bach} and obtain the assertion once noticing that $\tau_1^*K_S - K_{S_1} \sim_{\mathbb{Q}} \tau_1^*\bigl(\frac{p-1}{p}K_S\bigr)$.
		\end{proof}

		\section{Canonical bundle formula for inseparable fibrations}
		In this section, we shall treat inseparable fibrations of relative dimension one. We work over an algebraically closed field $k$ of characteristic $p>0$.
		
		\subsection{A special base change}\label{sec:nota-sett}
		Let $X, S$ be normal projective varieties over $k$.
		Let $f\colon X \to S$ be an inseparable fibration of relative dimension one such that the generic fiber $X_{K(S)}$ has arithmetic genus $\le1$.
		In the following, we shall treat the case when $S$ has m.A.d.  Remind that $\Omega_{S^{\oneoverp}/S}^1$ is not necessarily globally generated. We construct a base change $S_1 \to S$ such that $\Omega_{S_1/S}^1$ is generically globally generated as follows.
		\medskip
		
		Assume that $S$ is of m.A.d.\ and denote by $a_S\colon S \to A$ the Albanese morphism of $S$.
		Let $X_1, S_1$ be the normalization of $(X_{A^{\oneoverp}})_{\mathrm{red}}, (S_{A^{\oneoverp}})_{\mathrm{red}}$ respectively. Note that the natural morphism $f_1\colon X_1 \to S_1$ is not necessarily a fibration; we denote by $f'_1\colon X_1 \to S_1'$ the fibration arising from the Stein factorization of $f_1$.  We have the following commutative diagram
		$$\centerline{\xymatrix@C=1.5cm{%
			&X^{\oneoverp}\ar[d]\ar[r]^{\pi'}\ar@/^3pc/[rrr]|{\;F_X\;} &X_1\ar[rd]^{f_1}\ar[d]_{f_1'} \ar@/^1.5pc/[rr]|{\;\pi_1\;}\ar[r]
			&X_{S_1}\ar[r]\ar[d] &X\ar[d]^{f}\\
			&S^{\oneoverp}\ar[rrd]\ar[r]  &S_1'\ar[r]^{\tau'_1}\ar[rd]^{a_{S_1'}}   &S_1\ar[r]^{\tau_1} \ar[d]^{a_{S_1}}  &S\ar[d]^{a_S} \\
			&  &           &A^{\oneoverp} \ar[r]^{F_A}          &A \rlap.
			}}$$
		Note that $a_{S_1}$ is the Albanese morphism of $S_1$ by the universal property of the Albanese morphism $S^{\frac1p} \to A^{\frac1p}$.
		
		We make the following important remark by results of Section~\ref{subsec:fol-bsch}
		\begin{itemize}
		\item
		$X_1= (X_{A^{\oneoverp}})_{\mathrm{red}}^{\nu}$ coincides with $(X_{S_1})_{\mathrm{red}}^{\nu}$, the sheaves
		$$\Omega_{A^{\oneoverp} \to X^{\oneoverp}}\subseteq \Omega_{S_1\to X^{\oneoverp}} \subseteq\Omega_{X_1\to X^{\oneoverp}} \cong \pi'^*\Omega_{X_1/X}^1$$
		coincide with each other over a nonempty open subset of $X^{\oneoverp}$, and the morphisms $X^{\oneoverp} \to X_1$ and $S^{\oneoverp} \to S_1$ are induced by the foliations $(\Omega_{A^{\oneoverp}\to X^{\oneoverp}})^{\perp}$ and $(\Omega_{A^{\oneoverp}\to S^{\oneoverp}})^{\perp}$ respectively.
		\end{itemize}

		\begin{prop}\label{prop:basic-of-X-S-A}
		Let the notation be as above.
		\begin{enumerate}[\rm(1)]
			\item
			Over an open subset of $S_1$,
			$\Omega_{S_1/S}^1$ is globally generated by
			$a_{S_1}^*H^0(A^{\oneoverp}, \Omega_{A^{\oneoverp}/A}^1)$, and the natural map
			$H^0(A^{\oneoverp}, \Omega_{A^{\oneoverp}/A}^1) \to H^0(S_1, \Omega_{S_1/S}^1)$
			is injective.
			\item
			We have $h^0(S_1, \tau_1^*K_S- K_{S_1}) \geq 1$, and if $a_S\colon S \to A$ is inseparable then the strict inequality holds.
			\item
			If $X_{K(S_1)}$ is integral, then there exist an effective divisor $E_1$ and an $f_1$-exceptional divisor $N_1$ on $X_1$
			such that
			$$\pi_1^*K_X \sim_{\mathbb Q} K_{X_1} + E_1 + f_1^*(\tau_1^*K_S- K_{S_1}) + N_1.$$
			\item
			If $X_{K(S_1)}$ is non-reduced, then the movable part of the linear system $\lvert\det (\Omega_{X_1/X}^1)\rvert$ has nontrivial horizontal components over $S_1$.
			\item
			If $X_{K(S_1)}$ is normal, then $f_1\colon X_1 \to S_1$ is a fibration. We may do base change $S_2:=(S_1\times_{A^{\oneoverp}} A^{\oneoverp[^2]})_{\mathrm{red}}^{\nu} \to S_1$ as above. Repeating this process we can obtain a number $n$ such that $X_{K(S_{n-1})}$ is normal, but $X_{K(S_n)}$ is not normal.
		\end{enumerate}
		\end{prop}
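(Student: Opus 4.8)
The plan is to establish (1) first, deduce (2) from it, obtain (3) and (4) as applications of the results of \S\ref{subsec:fol-bsch}, and finally prove (5) by a termination argument. For (1), I would begin by identifying $\Omega^1_{A^{\oneoverp}/A}$: the natural map $F_A^*\Omega^1_A\to\Omega^1_{A^{\oneoverp}}$ vanishes, so $\Omega^1_{A^{\oneoverp}/A}\cong\Omega^1_{A^{\oneoverp}}$ is free and globally generated by the $g:=\dim A$ invariant one-forms of the abelian variety $A^{\oneoverp}$. Next, exactly as in the construction of the map in (\ref{eq:surjectivity-of-relative-cotangent-sheaves}), the base-change isomorphism $\Omega^1_{S_{A^{\oneoverp}}/S}\cong q^*\Omega^1_{A^{\oneoverp}/A}$ (with $q$ the projection to $A^{\oneoverp}$), composed with the surjections onto the reduction and then onto the normalization, yields a natural homomorphism $a_{S_1}^*\Omega^1_{A^{\oneoverp}/A}\to\Omega^1_{S_1/S}$ that is surjective over a big open subset of $S_1$; global generation by $a_{S_1}^*H^0(A^{\oneoverp},\Omega^1_{A^{\oneoverp}/A})$ then follows.

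For the injectivity of $H^0(A^{\oneoverp},\Omega^1_{A^{\oneoverp}/A})\to H^0(S_1,\Omega^1_{S_1/S})$, I would argue that a $k$-combination $\sum_i c_i\,d(x_i^{\oneoverp})$ of invariant forms maps to zero only if $\sum_i c_i\,a_{S_1}^*x_i^{\oneoverp}\in K(S)$; taking $p$-th powers turns this into $a_S^*\!\bigl(\sum_i c_i^p x_i\bigr)\in K(S)^p$, that is, a nontrivial $k$-linear relation among the pulled-back invariant forms $a_S^*dx_i$ of $A$. Thus injectivity reduces to the injectivity of $a_S^*\colon H^0(A,\Omega^1_A)\to H^0(S,\Omega^1_S)$, a standard property of the Albanese morphism. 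I expect this to be the main obstacle, since one must cleanly separate two notions of independence: the invariant forms remain $k$-linearly independent as global sections of $\Omega^1_{S_1/S}$, even though they become $K(S_1)$-linearly dependent at the generic point precisely when $a_S$ is inseparable --- and it is exactly this generic dependence that accounts for the rank drop of $\Omega^1_{S_1/S}$ used in (2).

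Granting (1), statement (2) is short. Proposition~\ref{prop:can-foliation} applied to the height-one purely inseparable morphism $\tau_1$ gives $\tau_1^*K_S-K_{S_1}\simQ(p-1)\det\Omega^1_{S_1/S}$, and global generation produces a nonzero section of $\det\Omega^1_{S_1/S}$, so $h^0(S_1,\tau_1^*K_S-K_{S_1})\ge1$. If $a_S$ is inseparable, then $\rank\Omega^1_{S_1/S}=\log_p[K(S_1):K(S)]<g$ while $h^0(S_1,\Omega^1_{S_1/S})\ge g$ by (1); applying Lemma~\ref{lem:kod-gg} to $\mathcal E=\Omega^1_{S_1/S}$ yields $h^0(\det\Omega^1_{S_1/S})>1$, hence the strict inequality.

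For (3), with $X_{K(S_1)}$ integral I would invoke Proposition~\ref{prop:compds} for $\pi_1\colon X_1\to X$ over $\tau_1$ to get $\pi_1^*K_{X/S}\simQ K_{X_1/S_1}+(p-1)E+N_1$ with $E$ effective and $N_1$ being $f_1$-exceptional; substituting $\pi_1^*K_X=\pi_1^*K_{X/S}+f_1^*\tau_1^*K_S$ and $K_{X_1/S_1}=K_{X_1}-f_1^*K_{S_1}$ and setting $E_1=(p-1)E$ collects into the stated formula. For (4), the globally generating subspace $a_{S_1}^*H^0(A^{\oneoverp},\Omega^1_{A^{\oneoverp}/A})$ furnished by (1) is exactly the hypothesis of Proposition~\ref{prop:JiWal-refined-version}, whose part (2) produces a nonzero horizontal movable part of $|\det\Omega^1_{X_1/X}|$ as soon as $X_{K(S_1)}$ is non-reduced. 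Finally, for (5): if $X_{K(S_1)}$ is normal it is a regular curve whose reduced geometric fibre is $\mathbb P^1$ (by the arithmetic-genus hypothesis together with \S\ref{sec:g=1}), so $K(S_1)$ is algebraically closed in $K(X_1)$ and $f_1$ is a fibration; this permits re-running the construction with $A^{\oneoverp}$ replaced by $A^{\oneoverp[^2]}$, and inductively by higher Frobenius twists. The iteration must terminate because $f$ is inseparable: by Proposition~\ref{prop:basic-fib} the geometric generic fibre $X_{\overline{K(S)}}$ is non-reduced, so non-reducedness is already defined over a finite purely inseparable extension of $K(S)$, and hence $X_{K(S_n)}$ fails to be normal at some finite stage $n$; taking $n$ minimal yields the asserted dichotomy.
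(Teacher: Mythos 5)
Parts (2), (3) and (4) of your proposal, and the global-generation half of (1), coincide with the paper's own proof. The genuine gap is your injectivity argument in (1). You write an arbitrary invariant form on $A^{\oneoverp}$ as a constant combination $\sum_i c_i\, d(x_i^{\oneoverp})$ with $x_i$ rational functions on $A$, and then \emph{integrate}: from vanishing in $\Omega^1_{K(S_1)/K(S)}$ you pass to $\sum_i c_i\, a_{S_1}^*x_i^{\oneoverp}\in K(S)$ and take $p$-th powers. This presupposes that the invariant one-forms of $A^{\oneoverp}$ are exact rational forms, which is false in general: a closed rational one-form is exact if and only if it is killed by the Cartier operator, and on an \emph{ordinary} abelian variety the Cartier operator is bijective on the space of invariant forms, so no nonzero invariant form is exact there. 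Hence your reduction to the injectivity of $a_S^*\colon H^0(A,\Omega^1_A)\to H^0(S,\Omega^1_S)$ collapses except in special (supersingular-type) cases; note that this also undercuts your proof of the strict inequality in (2), which uses $h^0(S_1,\Omega^1_{S_1/S})\ge\dim A$ from (1). The paper avoids integrating forms altogether: it composes $H^0(A^{\oneoverp},\Omega^1_{A^{\oneoverp}/A})\to H^0(S_1,\Omega^1_{S_1/S})$ with the further pullback to $X^{\oneoverp}$, identifies the composite, via $\Omega^1_{A^{\oneoverp}/A}\cong\Omega^1_{A^{\oneoverp}}$ and $\Omega^1_{X^{\oneoverp}/X}\cong\Omega^1_{X^{\oneoverp}}$, with pullback of global one-forms along the Albanese morphism $X^{\oneoverp}\to A^{\oneoverp}$, and applies \cite{Serre58Qu}*{Th\'eor\`eme~4} to that composite; injectivity of the intermediate map follows. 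Serre's theorem is exactly the ``standard property'' you wanted, but it must be invoked after this factorization, not after integration.

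There is also a smaller gap in (5). The first half is fine (the cleanest justification that $f_1$ is a fibration is flat base change: $H^0(X_{K(S_1)},\mathcal O)=H^0(X_{K(S)},\mathcal O)\otimes_{K(S)}K(S_1)=K(S_1)$, combined with normality, which is the paper's argument). But for termination you assert that, since non-reducedness of $X_{\overline{K(S)}}$ is witnessed over a finite purely inseparable extension of $K(S)$, it is witnessed over some $K(S_n)$. This does not follow immediately: $K(S_n)=K(S)[K(A)^{\oneoverp[^n]}]$ is generated by Frobenius roots of functions coming from $A$, not from $S$, so one must show that this tower exhausts the perfect closure of $K(S)$. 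The paper proves this by a degree argument: with $B=a_X(X)$, the degree $[K(S_n):K(B^{\oneoverp[^n]})]$ strictly drops as long as $S_n\to B^{\oneoverp[^n]}$ is inseparable, so after finitely many steps this map is separable, after which $S_{n+1}\cong S_n^{\oneoverp}$ and the tower becomes the Frobenius tower of $S_n$; only at that point does your finiteness argument apply.
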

		\begin{proof}
		(1) First we note that, the natural homomorphism $a_{S_1}^* \Omega^1_{A^{\oneoverp}/A} \to \Omega^1_{S_1/S}$ is generically surjective.
		Since $\Omega^1_{A^{\oneoverp}/A} \cong \Omega^1_{A^{\oneoverp}/k^{1/p}}$ is a trivial vector bundle, over an open subset of $S_1$, $\Omega_{S_1/S}^1$ is globally generated by
		$a_{S_1}^*H^0(A^{\oneoverp}$, $\Omega_{A^{\oneoverp}/A}^1)$.
		For the second assertion, since the morphism $S^{\oneoverp} \to A^{\oneoverp}$ is the Albanese morphisms of $S^{\oneoverp}$, 
		by \cite[Théorème~4]{Serre58}, the natural map
		$H^0(A^{\oneoverp}, \Omega_{A^{\oneoverp}}^1) \cong H^0(A^{\oneoverp}, \Omega_{A^{\oneoverp}/A}^1) \to H^0(S^{\oneoverp}, \Omega_{S^{\oneoverp}/S}^1) \cong H^0(S^{\oneoverp}, \Omega_{S^{\oneoverp}}^1)$ is injective.
		However, this map factors through $H^0(A^{\oneoverp}, \Omega_{A^{\oneoverp}/A}^1) \to H^0(S_1, \Omega_{S_1/S}^1)$, which is injective too.
		
		(2) 
		By the assertion~(1), we have $\det\Omega_{S_1/S}^1 \succeq 0$.  Then by $\tau_1^*K_S= K_{S_1} +(p-1) \det\Omega_{S_1/S}^1$, we have $h^0(S_1, \tau_1^*K_S- K_{S_1}) \geq 1$.
		
		Next, assume $a_S$ is inseparable.
		Let $B$ be the image of $S$ in $A$.
		Since $a_S$ is inseparable,  $S \otimes_{K(B)} K(B)^{1/p}$ is non-reduced.
		As $S_1 = (S \times_B B^{\frac1p})_{\rm red}^\nu$, by the assertion (ii) in Section~\ref{subsec:fol-bsch}, we have $\mathop{\rm rank} \Omega^1_{S_1/S} < \mathop{\rm rank} \Omega^1_{B^{\oneoverp}/B} = \dim S$.
		Now, using the assertion (1), we have
		$$h^0(S_1, (\Omega_{S_1/S}^1)^{\vee\vee}) \geq h^0(A^{\oneoverp}, \Omega_{A^{\oneoverp}/A}^1)= \dim A\geq \dim S >  \mathrm{rank}~\Omega_{S_1/S}^1.$$
		Then by Lemma~\ref{lem:kod-gg}, we have $h^0(S_1, \det\Omega_{S_1/S}^1) \ge 2$, which implies that $h^0(S_1, \tau_1^*K_S- K_{S_1}) \geq 2$.
		
		The assertion~(3) is a direct consequence of Proposition~\ref{prop:compds}.

		(4) Note that $X_1$ coincides with $(X_{S_1})_{\mathrm{red}}^{\nu}$.
		By assertion~(1), over an open subset $U$ of $S_1$, $\Omega_{S_1/S}^1$ is globally generated by $\Gamma:=a_{S_1}^*H^0(A^{\oneoverp}, \Omega_{A^{\oneoverp}/A}^1)\subseteq H^0(S_1,\Omega_{S_1/S}^1)$.
		Shrinking $U$, we may assume $\Omega_{U/S}^1$ is locally free.
		Then we can apply Proposition~\ref{prop:JiWal-refined-version} to conclude the assertiont.
		
		(5) If $X_{K(S_1)}$ is normal, then the inclusion $\mathcal{O}_{S_1} = \tau_1^*f_{*} \mathcal{O}_{X} \subseteq f_{1*} \mathcal{O}_{X_1}$ is an isomorphism over the generic point of $S_1$. In turn, since $S_1$ is normal, we conclude that $f_1\colon X_1 \to S_1$ is a fibration.
		
		Denote by $B$ the Albanese image $a_X(X)$. If $K(S)/K(B)$ is inseparable, then $[K(S_1):K(B^{\oneoverp})] < [K(S):K(B)]$. Finally we may attain some $m$ such that $S_{m} \to B^{\oneoverp[^m]}$ is separable, which implies that $S_{m+1} \cong S_m^{\oneoverp}$.
		Since $X_{K(S)}$ is not geometrically normal, there must exist some $n$ such that $X_{K(S_n)}$ is not normal.
		\end{proof}

		\subsection{Canonical bundle formula for inseparable fibrations} Our first result for inseparable fibrations is the following, under the condition that the Albanese morphism $a_S\colon S \to A$ is separable.
		
		\begin{thm}\label{thm:base-sep-A}
		Let $X$ be a normal $\mathbb{Q}$-factorial projective variety and $S$ be a normal projective variety.
		Let $\Delta$ be an effective $\mathbb{Q}$-divisor on $X$.
		Let $f\colon X \to S$ be an inseparable fibration of relative dimension one. Assume that
		\begin{enumerate}[\rm(C1)]
			\item $(X_{K(S)}, \Delta_{K(S)})$ is lc;
			\item there exists a big open subset $S^{\circ}$ contained in the regular locus $S^{\rm reg}$ of $S$ and a $\mathbb{Q}$-divisor $D^{\circ}$ on $S^{\circ}$ such that
			$(K_X+\Delta)|_{f^{-1}(S^{\circ})}  \sim_\mathbb Q f^*D^{\circ}$;
			\item $S$ is of m.A.d., and the Albanese morphism $a_S\colon S \to A$ is separable.
		\end{enumerate}
		Then $D \geQ \frac{1}{2p}K_S$, where $D:=\overline{D^{\circ}}$ is the closure divisor of $D^{\circ}$ in $S$. In particular, $\kappa(S,D) \geq \kappa(S)$.
		\end{thm}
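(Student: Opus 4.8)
The plan is to pull the whole situation back along a single Frobenius base change on the Albanese $A$, where the inseparable genus-$\le1$ fibration degenerates into a genus-zero fibration, and then to feed this into the canonical bundle formula of Theorem~\ref{thm:can-bd-g0}. Since the assertion concerns only codimension-one data and Iitaka dimensions, I would first restrict to a big open subset, assume $S$ regular, and replace $D^\circ$ by $D=\overline{D^\circ}$, so that $K_X+\Delta\simQ f^*D$ holds in codimension one. I would then perform the base change of Subsection~\ref{sec:nota-sett} along $F_A\colon A^{\oneoverp}\to A$, producing $\tau_1\colon S_1\to S$ and $\pi_1\colon X_1\to X$. Because $a_S$ is \emph{separable}, one has $K(S_1)=K(S)^{\oneoverp}$, so after shrinking we may identify $S_1$ with $S^{\oneoverp}$; this gives the clean relation $\tau_1^*K_S\simQ p\,K_{S_1}$ (equivalently $\tau_1^*K_S-K_{S_1}\simQ(p-1)K_{S_1}$) from Proposition~\ref{prop:can-foliation}, and $K(S_1)$ is algebraically closed in $K(X_1)$, so $f_1\colon X_1\to S_1$ is again a fibration with no Stein correction. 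As $f$ is inseparable, $X_{K(S)}$ is geometrically non-reduced; by the analysis of Section~\ref{sec:g=1} (there is a height-one field $L\subseteq K(S)^{\oneoverp}$ over which the fiber is already non-normal) the generic fiber $X_{K(S_1)}$ is non-normal, and its normalized reduction has arithmetic genus zero by Proposition~\ref{prop:basic}.

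Next I would produce the genus-zero data on $X_1$. Using Proposition~\ref{prop:compds} I rewrite $f_1^*\tau_1^*D\simQ\pi_1^*(K_X+\Delta)\simQ K_{X_1}+(p-1)\det\Omega^1_{X_1/X}+\pi_1^*\Delta$ modulo $f_1$-exceptional and codimension-two terms, and split into two cases. If $X_{K(S_1)}$ is non-reduced, Proposition~\ref{prop:basic-of-X-S-A}(4) provides a movable sub-system $\mathfrak M\subseteq|\det\Omega^1_{X_1/X}|$ with nontrivial horizontal part; writing $(p-1)\det\Omega^1_{X_1/X}=M_0+(\text{fixed})$ yields $K_{X_1}+M_0+\Delta_1\simQ f_1^*\tau_1^*D$ with $\Delta_1\ge0$ and $\deg_{K(S_1)}M_0>0$. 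This is exactly the input of Theorem~\ref{thm:can-bd-g0}, whose hypothesis~(b) holds because $a_{S_1}\colon S_1\to A^{\oneoverp}$ is generically finite; it gives $\tau_1^*D\geQ\tfrac12 K_{S_1}$. If instead $X_{K(S_1)}$ is reduced-but-not-normal, the conductor is rigid and carries no movable horizontal part, so I would instead apply Theorem~\ref{thm:red-bach} directly to $\tau_1$, obtaining the relation $\tau_1^*D\simQ aK_{\bar T'}+b\tau'^*K_{\bar T}+c\,\tau_1'^*(\tau_1^*K_S-K_{S_1})+E_{\bar T'}$ on a horizontal cover $\bar T'$ of $S$.

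To conclude, I would descend these bounds to $S$ through the purely inseparable $\tau_1$, using $\tau_{1*}\tau_1^*=\deg\tau_1$. In the non-reduced case this is immediate: $\tau_1^*D\geQ\tfrac12 K_{S_1}=\tfrac{1}{2p}\tau_1^*K_S$ gives $\tau_1^*\bigl(D-\tfrac{1}{2p}K_S\bigr)\geQ0$, hence $D\geQ\tfrac{1}{2p}K_S$ after pushing forward. In the reduced case I substitute $\tau_1^*K_S-K_{S_1}\simQ(p-1)K_{S_1}=\tfrac{p-1}{p}\tau_1^*K_S$ and combine the remaining terms $aK_{\bar T'}+b\tau'^*K_{\bar T}$ using that $\bar T,\bar T'$ are of maximal Albanese dimension, so their canonical divisors are $\mathbb Q$-effective (Proposition~\ref{prop:char-abel-var}) and comparable to the pullback of $K_S$ through covers of degree $\le2$ over $S$. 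Here the factor $\tfrac12$ is the relative-degree-$\le2$ contribution of the genus-one fiber and $\tfrac1p$ is the Frobenius contribution, so that in all cases $D\geQ\tfrac{1}{2p}K_S$. The final statement $\kappa(S,D)\ge\kappa(S)$ then follows since $D-\tfrac{1}{2p}K_S\geQ0$ forces $\kappa(S,D)\ge\kappa\bigl(S,\tfrac{1}{2p}K_S\bigr)=\kappa(S,K_S)=\kappa(S)$.

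The hard part will be the reduced-but-not-normal lc case. When the horizontal coefficient $\alpha$ of $\Delta$ equals $1$, the coefficient $c$ in Theorem~\ref{thm:red-bach} degenerates to $0$, so the bound cannot be read off from the $\tau_1^*K_S-K_{S_1}$ term; one must instead extract $\tfrac{1}{2p}K_S$ from the canonical divisors $K_{\bar T'},K_{\bar T}$ of the horizontal covers. Controlling these requires tracking precisely whether the finite maps $\bar T'\to\bar T\to S$ are separable or purely inseparable, so that each $K$ compares to the relevant pullback of $K_S$ in the correct direction (via Hurwitz ramification when separable, via Proposition~\ref{prop:can-foliation} when inseparable), and then balancing these comparisons against the maximal-Albanese-dimension positivity. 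This bookkeeping of inseparable degrees and ramification—rather than any single estimate—is where I expect the genuine difficulty to lie.
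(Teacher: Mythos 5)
Your main case is exactly the paper's proof: restrict to a big regular open subset; use separability of $a_S$ to identify $S_1=S^{\oneoverp}$, so that $f_1\colon X_1\to S_1$ is again a fibration and $\tau_1^*K_S\simQ pK_{S_1}$; write $f_1^*\tau_1^*D\simQ \pi_1^*(K_X+\Delta)\simQ K_{X_1}+(p-1)\det\Omega^1_{X_1/X}+\pi_1^*\Delta$ (Proposition~\ref{prop:can-foliation}); invoke Proposition~\ref{prop:basic-of-X-S-A}(4) to extract a movable part with nontrivial horizontal component; apply Theorem~\ref{thm:can-bd-g0} (condition (b), via $S_1\to A^{\oneoverp}$ generically finite, or condition (a) with the even better constant $t=1$ should $f_1$ happen to be separable) to get $\tau_1^*D\geQ\tfrac12K_{S_1}$; and descend through the purely inseparable $\tau_1$ to obtain $D\geQ\tfrac1{2p}K_S$, whence $\kappa(S,D)\geq\kappa(S)$. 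All of this agrees with the paper step by step, including the citations.

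The problem is your case division and where you place the difficulty. The case ``$X_{K(S_1)}$ reduced but not normal'' --- the one you route through Theorem~\ref{thm:red-bach}, flag as the genuine hard part, and leave unresolved (the bookkeeping of $K_{\bar T},K_{\bar T'}$ and the degeneration $c\to0$ when the horizontal coefficient is $1$) --- is vacuous. Since $f$ is inseparable, $X_{K(S)}$ is not geometrically reduced (Proposition~\ref{prop:basic-fib}), and geometric reducedness of a finite-type scheme over a field $K$ of characteristic $p$ is detected precisely by the base change to $K^{1/p}$: it is geometrically reduced if and only if its pullback to $K^{1/p}$ is reduced. Because $a_S$ is separable, $K(S_1)=K(S)^{\oneoverp}$, so $X_{K(S_1)}=X_{K(S)}\otimes_{K(S)}K(S)^{\oneoverp}$ is automatically non-reduced; this is exactly why the paper applies Proposition~\ref{prop:basic-of-X-S-A}(4) with no case split, and why Proposition~\ref{prop:curve-nonreduced} (rather than the geometrically reduced analysis) governs the generic fiber of $f_1$. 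Thus the Theorem~\ref{thm:red-bach} branch never arises and none of the ramification/inseparability bookkeeping you anticipate is needed. As written, your proposal is formally incomplete by its own admission, but the only missing ingredient is the observation that the case you could not finish is empty; with that one sentence added, your argument coincides with the paper's.
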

		\begin{proof}
		To prove the assertion, we may restrict ourselves on $S^{\circ}$. So in the following, we assume $S=S^{\circ}$. By the assumption~(C3) we have $S_1 = S^{\oneoverp}$. Since the fibration $X^{\oneoverp} \to S^{\oneoverp}$ factors through $f_1\colon X_1 \to S_1$, the morphism $f_1$ is also a fibration.
		By Proposition~\ref{prop:curve-nonreduced}, the generic fiber of $X_1 \to S_1$ has arithmetic genus zero, and
		$$\pi_1^*K_X \sim_{\mathbb Q} K_{X_1} + (p-1)\det (\Omega_{X_1/X}^1).$$
		By Proposition \ref{prop:basic-of-X-S-A} (4), the movable part of the linear system $\lvert \det (\Omega_{X_1/X}^1) \rvert$ has nontrivial horizontal components.
		We may write that
		$$f_1^*\tau_1^*D \sim_{\mathbb Q}\pi_1^*(K_X + \Delta) \sim_{\mathbb Q} K_{X_1} + (p-1)\abs{\det (\Omega_{X_1/X}^1)} + \pi_1^*\Delta.$$
		Applying Theorem~\ref{thm:can-bd-g0}, we have $\tau_1^*D\geQ \frac{1}{2}K_{S_1}$. In turn, by $\tau_1^*K_S \sim_{\mathbb{Q}} pK_{S_1}$ we conclude that $D \geQ\frac{1}{2p}K_{S}$.
		\end{proof}
		
		The second theorem treats the case when the Albanese morphism $a_S\colon S \to A$ is inseparable.
		
		\begin{thm}\label{thm:insep-A}
		Let the notation be as in Theorem~\ref{thm:base-sep-A}. Assume {\rm(C1,\thinspace C2)} and the following condition
		\begin{enumerate}
			\item[\emph{(C3$'$)}] $S$ is of m.A.d., and the Albanese morphism $a_S\colon S \to A$ is inseparable.
		\end{enumerate}
		Then $\kappa(S,D)\ge0$ where $D:=\overline{D^{\circ}}$. More precisely, letting $X_1$, $S_1$, $f_1$ be as in Section~\ref{sec:nota-sett}, we have:
		\begin{enumerate}[\rm(a)]
			\item if $X_{K(S_1)}$ is integral, then $\tau_1^*D \geQ \tau_1^*K_S - K_{S_1}$, hence $\kappa(S, D) \geq 1$;
			\item if $X_{K(S_1)}$ is non-reduced, then $\tau_1'^*\tau_1^*D \geQ \frac{1}{2}K_{S'_1}$, and if moreover $\kappa(S,D)= 0$ and  $S_1'$ admits a resolution of singularities, then $S_1'\to A^{\oneoverp}$ is birational, and thus $S_1 = S_1'$.
		\end{enumerate}
		In particular, if $\dim S=2$ then $\kappa(S,D)\ge 1$.
		\end{thm}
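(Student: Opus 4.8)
The plan is to run the construction of Section~\ref{sec:nota-sett} and reduce everything to positivity statements on the m.A.d.\ bases $S_1$ and $S_1'$, which satisfy $K_{S_1}\geQ0$ and $K_{S_1'}\geQ0$ by Proposition~\ref{prop:char-abel-var}(1). The Covering Theorem~\ref{thm:covering} (together with its remark for equidimensional maps) identifies $\kappa(S,D)=\kappa(S_1,\tau_1^*D)$, and via the fibration $f_1'$ also $\kappa(S_1,\tau_1^*D)=\kappa(S_1',(\tau_1')^*\tau_1^*D)$. From (C2) I have the pullback relation $\pi_1^*(K_X+\Delta)\simQ f_1^*\tau_1^*D$ over a big open set, and I will combine it with two outputs of Proposition~\ref{prop:basic-of-X-S-A}. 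The first is that $h^0(S_1,\tau_1^*K_S-K_{S_1})\ge2$, which is strict precisely because $a_S$ is inseparable; writing $R:=\tau_1^*K_S-K_{S_1}\geQ0$ this gives $\kappa(S_1,R)\ge1$, and this is the source of the extra ``$+1$'' in conclusion~(a). The argument then splits according to whether $X_{K(S_1)}$ is integral or non-reduced, exactly matching Proposition~\ref{prop:basic-of-X-S-A}(3) versus (4).

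For case~(b) (non-reduced fibre) I would first observe that the induced fibration $f_1'\colon X_1\to S_1'$ has generic fibre of arithmetic genus zero, since normalizing the reduction of $X_{K(S_1)}$ produces a form of $\mathbb P^1$ (Proposition~\ref{prop:curve-nonreduced}). The foliation formula $\pi_1^*K_X\simQ K_{X_1}+(p-1)\det\Omega^1_{X_1/X}$ of Proposition~\ref{prop:can-foliation}, combined with the pullback relation, gives $K_{X_1}+(p-1)\det\Omega^1_{X_1/X}+\pi_1^*\Delta\simQ (f_1')^*\bigl((\tau_1')^*\tau_1^*D\bigr)$. Splitting $(p-1)\det\Omega^1_{X_1/X}=M_0+F$ into its movable and fixed parts, Proposition~\ref{prop:basic-of-X-S-A}(4) guarantees $\deg_{K(S_1')}M_0>0$, so the hypotheses of Theorem~\ref{thm:can-bd-g0} hold with boundary $F+\pi_1^*\Delta$. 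Since $a_{S_1'}\colon S_1'\to A^{\oneoverp}$ is generically finite onto an abelian variety, condition~(b) of that theorem is available (yielding the coefficient $\tfrac12$), so $(\tau_1')^*\tau_1^*D\geQ\tfrac12 K_{S_1'}$; as $K_{S_1'}\geQ0$ this gives $\kappa(S,D)\ge0$.

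For case~(a) (integral fibre) I would instead feed $X_{K(S_1)}$ integral into Proposition~\ref{prop:basic-of-X-S-A}(3) to get $\pi_1^*K_X\simQ K_{X_1}+E_1+f_1^*R+N_1$ with $E_1\ge0$ and $N_1$ exceptional over $S_1$ (hence over $S_1'$), and then combine it with the pullback relation to obtain $f_1^*(\tau_1^*D-R)\simQ K_{X_1}+E_1+\pi_1^*\Delta+N_1$. The desired $\tau_1^*D\geQ R$ is thus equivalent to $\kappa\bigl(X_1,K_{X_1}+E_1+\pi_1^*\Delta+N_1\bigr)\ge0$, which by the Covering Theorem reduces to the $\mathbb Q$-effectivity of $K_{X_1}$ up to exceptional and effective summands. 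Restricting to a general fibre $F$ of $f_1'$ shows $E_1+\pi_1^*\Delta$ is $f_1'$-vertical and (in the substantive subcase of genus one) $\omega_F\cong\mathcal O_F$; writing $K_{X_1/S_1'}\simQ (f_1')^*L+(\text{vertical})$ and invoking the canonical bundle formula for the genus-one fibration $f_1'$ (the elliptic case of Chen--Zhang~\cite{CZ15}, or the separable Theorem~\ref{thm:sep-cb-formula} when $f_1'$ is separable) to see $L\geQ0$, together with $K_{S_1'}\geQ0$, yields $\kappa(X_1,K_{X_1})\ge0$; in the residual genus-zero subcase the conductor of $\pi_1$ supplies a horizontal system and Theorem~\ref{thm:can-bd-g0} applies as in~(b). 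Hence $\tau_1^*D\geQ R$, and since $\kappa(S_1,R)\ge1$ we conclude $\kappa(S,D)=\kappa(S_1,\tau_1^*D)\ge1$.

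Finally, for the equality discussion in~(b) and the last assertion: if $\kappa(S,D)=0$, then $(\tau_1')^*\tau_1^*D\geQ\tfrac12 K_{S_1'}$ and $K_{S_1'}\geQ0$ force $\kappa(S_1',K_{S_1'})=0$, so under the resolution hypothesis Proposition~\ref{prop:char-abel-var}(2) makes $S_1'$ birational to an abelian variety; comparing with the generically finite $a_{S_1'}\colon S_1'\to A^{\oneoverp}$ via the description of $\mathrm{Alb}(S_1')$ as the normalization of $A^{\oneoverp}$ in $K(S_1')$ (as in the proof of Proposition~\ref{prop:char-abel-var}(2)) forces $a_{S_1'}$ to be birational, whence $S_1=S_1'$. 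When $\dim S=2$ resolutions always exist, so this applies, and a short additional analysis of the now-birational map $S_1'\to A^{\oneoverp}$ together with the inseparability of $a_S$ rules out $\kappa(S,D)=0$ in case~(b), so that $\kappa(S,D)\ge1$ in all cases. I expect the genuine difficulty to be the descent in case~(a): establishing $L\geQ0$, i.e.\ the positivity of the relative-canonical/moduli part of the genus-one fibration $f_1'$ over the m.A.d.\ base in characteristic $p$, where naive Fujita-type semipositivity can fail and one must fall back on the elliptic/separable canonical bundle formulas; the degree bookkeeping behind the birationality in the equality case is the secondary technical point.
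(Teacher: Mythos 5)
Your case~(b) and the equality discussion do match the paper's argument (Proposition~\ref{prop:basic-of-X-S-A}(4) supplies the movable horizontal part, Theorem~\ref{thm:can-bd-g0} under its condition~(b) gives the coefficient $\tfrac{1}{2}$, and Proposition~\ref{prop:char-abel-var} handles $\kappa(S,D)=0$). The genuine gap is in your case~(a), and it is exactly the point the paper's proof is built around. If $X_{K(S_1)}$ is normal, then the generic fibre of $f_1$ is just $X_{K(S)}\otimes_{K(S)}K(S_1)$, and its geometric fibre over $\overline{K(S_1)}=\overline{K(S)}$ is the same non-reduced scheme $X_{\overline{K(S)}}$; hence $f_1$ is \emph{still inseparable}. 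Consequently neither of the two tools you allow yourself applies: the generic fibre of $f_1'$ is not smooth, so the elliptic canonical bundle formula of \cite{CZ15} is unavailable, and $f_1'$ is not separable, so Theorem~\ref{thm:sep-cb-formula} is unavailable. Your dichotomy ``elliptic or separable'' misses precisely the hard case --- an inseparable fibration with normal generic fibre --- for which no canonical bundle formula exists at this stage; indeed producing one is what the theorem being proved is for. The paper resolves this by \emph{iterating} the base change (Proposition~\ref{prop:basic-of-X-S-A}(5)): it builds a tower $S\leftarrow S_1\leftarrow\cdots\leftarrow S_n$ with $n$ minimal such that $(X_{n-1})_{K(S_n)}$ fails to be normal, verifies that the generic-fibre pair stays lc along the tower, applies Theorem~\ref{thm:red-bach} (reduced non-normal case) or Theorem~\ref{thm:can-bd-g0} (non-reduced case) at that last step only, and then propagates effectivity back down the tower using $\tau_i^*K_{S_{i-1}}-K_{S_i}\geQ 0$ to reach $\tau_1^*D\geQ\tau_1^*K_S-K_{S_1}$. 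Without this recursion your argument cannot get off the ground in the normal subcase.

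Two secondary problems. First, in the subcase of~(a) where $X_{K(S_1)}$ is integral but not normal, you propose to feed the conductor of $\pi_1$ into Theorem~\ref{thm:can-bd-g0}; but that theorem requires a \emph{movable} linear system without fixed components (general members, covering families, Lemma~\ref{lem:4K52}), and a conductor divisor is a fixed divisor, so its hypotheses fail. The correct tool is Theorem~\ref{thm:red-bach}, whose proof runs adjunction along the horizontal conductor component; this is what the paper uses as the $n=1$ instance of its Case~(1). Second, for $\dim S=2$ your plan to ``rule out $\kappa(S,D)=0$ within case~(b)'' is left unsubstantiated; the paper's route is different: when $\dim S=2$ and $a_S$ is inseparable one has $\deg\tau_1\le p$, so $X_{K(S_1)}$ is automatically reduced (\cite{Sch10}, Lemma~1.3), case~(b) simply never occurs, and $\kappa(S,D)\ge1$ follows from case~(a).
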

		\begin{proof}
		We may also restrict us on $S^{\circ}$ and assume $S=S^{\circ}$.
		
		If $X_{K(S_1)}$ is normal, by Proposition~\ref{prop:basic-of-X-S-A}~(3) there exist divisors $E_1\ge0$ and $N_1$ (exceptional over $S_1$) fitting into the following equation
		\begin{equation}
			\label{eq:7.5-1}
			\pi_1^*(K_{X} + \Delta) \sim_{\mathbb Q} K_{X_1} + E_1 + \pi_1^*\Delta + f_1^*(\tau_1^*K_{S}- K_{S_1}) + N_1.
		\end{equation}
		Let $\Delta_1 = E_1 +\pi_1^*\Delta$ and $D_1= \tau_1^*D + K_{S_1} - \tau_1^*K_{S}$, then \[
		K_{X_1} + \Delta_1+N_1\sim_\mathbb Q f_1^*D_1.
		\]
		Repeating this process, by Proposition~\ref{prop:basic-of-X-S-A} (5), we obtain a minimal $n$ such that $(X_{n-1})_{K(S_n)}$ is not normal.
		We have the following commutative diagram: \[
		\xymatrix{
			&X_n\ar[r]^{\pi_{n}}\ar[d]^{f_n}\ar[dl]_{f_n'} & X_{n-1}\ar[d]^{f_{n-1}}\\
			S_n'\ar[r]^{\tau'_n}&S_n\ar[r]^{\tau_n}&S_{n-1}
		}
		\]
		where $f'_n$ is a fibration.
		
		Note that $(\Delta_{n-1})_{K(S_{n-1})} = \Delta_{K(S_{n-1})}$.
		We claim that $(X_{K(S_{n-1})},\Delta_{K(S_{n-1})})$ is lc. By construction, for $1\leq i\leq n-1$, each $X_{K(S_{i})}= X_{K(S)}\otimes_{K(S)} K(S_{i})$ is not geometrically reduced. It is trivial if $X_{K(S_{n-1})}$ has arithmetic genus one because then $\Delta_{K(S_{n-1})} =0$. We only need to consider the case $p=2$ and $X_{K(S_{n-1})}$ is a non-smooth conic over $K(S_{n-1})$, on which each prime divisor has degree $\geq 2$. Since $\deg_{K(S_{n-1})}K_{X_{K(S_{n-1})}} = -2$ and $K_{X_{K(S_{n-1})}} + \Delta_{K(S_{n-1})}\equiv 0$, we see that if $(X_{K(S_{n-1})},\Delta_{K(S_{n-1})})$ is not klt, then it is lc and $\Delta_{K(S_{n-1})}$ is a prime divisor of degree two.
		
		\medskip
		Case\thinspace (1): $(X_{n-1})_{K(S_n)}$ is reduced but not normal. By Proposition~\ref{prop:basic-of-X-S-A}~(3)  we have
		\begin{equation}
			\label{eq:7.5-3}
			K_{X_n} + E_n + \pi_n^*\Delta_{n-1} +N_n \sim_\mathbb Q
			f'^*_n\tau'^*_n(\tau_n^* D_{n-1} + K_{S_n} - \tau_n^* K_{S_{n-1}}) .
		\end{equation}
		If necessary replacing $S$ with a big open subset, we may assume $N_n=0$.
		Applying Theorem~\ref{thm:red-bach} to the fibration $f_{n-1}\colon X_{n-1} \to S_{n-1}$ and the base change $\tau_n\colon S_n\to S_{n-1}$, we see that there exist finite morphisms
		$\tau'\colon \bar{T} \to S_n$, $\tau''\colon \bar{T}' \to \bar{T}$, an effective $\mathbb{Q}$-divisor $E_{\bar T'}$ and rational numbers $a,b,c\geq 0$ such that
		\begin{equation}
			\label{eq:7.5-4}
			(\tau_n^* D_{n-1} + K_{S_n} - \tau_n^* K_{S_{n-1}})|_{\bar{T}'} \sim_{\mathbb{Q}} a K_{\bar{T}'} + b\tau''^*K_{\bar{T}} + c\tau''^*\tau'^*K_{S_n} + E_{\bar T'}.
		\end{equation}
		Since $K_{\bar{T}'},K_{\bar{T}},K_{S_n}\geQ 0$, we see that
		$$\bigl(D_n:=\tau_n^* D_{n-1} - (  \tau_n^* K_{S_{n-1}}-K_{S_n} )\bigr)\big|_{\bar{T}'} \geQ 0.$$
		By Covering Theorem~\ref{thm:covering} we have
		$$\kappa(S_{n-1},D_{n-1})= \kappa(S_n,\tau_n^*D_{n-1})\ge \kappa(S_n,\tau_n^*K_{S_{n-1}}-K_{S_n}).$$
		Remark that
		\begin{itemize}
			\item
			if $S_{n-1} \to A^{\oneoverp[^{n-1}]}$ is separable, then $S_n = S_{n-1}^{\oneoverp}$, therefore $\kappa(S_n,\tau_n^*K_{S_{n-1}}-K_{S_n}) = \kappa(S_{n-1},K_{S_{n-1}}) \geq 0$;
			\item
			otherwise by Proposition~\ref{prop:basic-of-X-S-A} (2), we have $\kappa(S_n,\tau_n^*K_{S_{n-1}}-K_{S_n}) \geq 1$.
		\end{itemize}
		In particular each $D_i  \geQ 0$, $i=1, \ldots, n-1$,  and inductively we obtain that
		$$\tau_1^*D = D_1 + (\tau_1^*K_{S}-K_{S_1}) \geQ \tau_1^*K_{S}-K_{S_1}.$$
		Then by Covering Theorem and the assumption that $a_S$ is inseparable, we have $\kappa(S,D) \geq \kappa(S_1,\tau_1^* K_S - K_{S_1}) \ge1$.
		
		\medskip
		Case\thinspace(2): $(X_{n-1})_{K(S_n)}$ is non-reduced. In this case $\bigl\lvert\det (\Omega_{X_{n}/X_{n-1}}^1)\bigr\rvert$ has nontrivial horizontal movable part by Proposition \ref{prop:basic-of-X-S-A} (4). We have
		$$f_n^*\tau_{n}^*D_{n-1} \sim_{\mathbb Q}\pi_n^*(K_{X_{n-1}} + \Delta_{n-1}) \sim_{\mathbb Q} K_{X_{n}} + (p-1)\det (\Omega_{X_{n}/X_{n-1}}^1) + \pi_n^*\Delta_{n-1}.$$
		Applying Theorem~\ref{thm:can-bd-g0} to the fibration $f_n'\colon X_n \to S_n'$, we see that
		$$\tau_n'^* \tau_n^*D_{n-1} \geQ \frac{1}{2}  K_{S_n'},$$
		thus
		$\kappa(S_n', \tau_n'^*\tau_n^*D_{n-1}) \geq \kappa(S_n', K_{S_n'}) \geq 0$.
		Therefore, $D_{n-1} \geQ 0$.
		
		If $n\geq 2$, using $D_i = \tau_i^*D_{i-1} - (\tau_i^*K_{S_{i-1}}-K_{S_i})$  and $\tau_i^*K_{S_{i-1}}-K_{S_i} \geQ 0$ ($1<i\leq n$)  inductively we prove that $D_1 \geQ 0$. It follows that
		$$\tau_1^*D = D_1 + (\tau_1^*K_{S}-K_{S_1}) \geQ \tau_1^*K_{S} - K_{S_1},$$
		thus $\kappa(S,D) \geq 1$.
		
		If $n=1$, since $\tau_1'^*\tau_{1}^*D\geQ \frac{1}{2}  K_{S_1'}$, applying Covering Theorem~\ref{thm:covering} we obtain
		$$\kappa(S,D) = \kappa(S_1', \tau_1'^*\tau_1^*D) \geq \kappa(S_1', K_{S_1'}) \geq 0.$$
		If moreover $\kappa(S,D)=0$, then $\kappa(S_1', K_{S_1'}) = 0$. Since by assumption $S_1'$ admits a resolution of singularities, we can apply Proposition~\ref{prop:char-abel-var} to show that the Albanese morphism $a_{S_1'}\colon S_1' \to A^{\oneoverp}$ is birational.  In turn, we see that $S_1 = S_1'$.

		Finally, if $\dim S = 2$, then $\deg \tau_1\le p$, thus $X_{K(S_1)}$ is reduced (e.g., \cite[Lemma~1.3]{Sch10}). We fall into case~(1), and it follows that $\kappa(S,D)\ge 1$.
		\end{proof}

		\section{Albanese morphism of varieties with nef anticanonical divisor}\label{sec:alb-morph}
		In this section, we apply the canonical bundle formulas obtained in the previous sections to investigate varieties with nef anticanonical divisor. We
		work over an algebraically closed field $k$ of characteristic $p>0$.
		
		\begin{thm}\label{thm:S-abelian}%
		Let $X$ be a normal projective $\mathbb Q$-factorial variety and $\Delta$ an effective $\mathbb{Q}$-divisor on $X$ such that
		$-(K_X+\Delta)$ is nef.
		Let $X \buildrel f\over\to S \buildrel a_S\over\to A_X$ be the Stein factorization of the Albanese morphism of $X$.
		Suppose that the fibration $f$ has relative dimension one and that $(X_{K(S)},\Delta_{K(S)})$ is klt.
		Then $S$ is an abelian variety.
		\end{thm}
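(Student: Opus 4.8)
The plan is to reduce the statement to the numerical triviality of $K_S$ and then invoke Proposition~\ref{prop:char-abel-var}(3). Since $X\buildrel f\over\to S\buildrel a_S\over\to A_X$ is a Stein factorization, $a_S$ is finite and $S$ is of maximal Albanese dimension; hence $K_S\geQ 0$ by Proposition~\ref{prop:char-abel-var}(1). Restricting the nef divisor $-(K_X+\Delta)$ to the generic fibre and using that $(X_{K(S)},\Delta_{K(S)})$ is klt gives $\deg_{K(S)}K_{X_{K(S)}}\le\deg_{K(S)}(K_{X_{K(S)}}+\Delta_{K(S)})\le 0$, so the regular curve $X_{K(S)}$ has arithmetic genus $\le1$. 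I would then split according to whether $(K_X+\Delta)|_{X_{K(S)}}$ is numerically trivial or strictly anti-ample on the generic fibre.

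In the relatively numerically trivial case (which contains the genus-one fibres, where $\Delta_{K(S)}=0$ and $K_{X_{K(S)}}\sim 0$, and the genus-zero fibres carrying a boundary of degree two, where $\mathrm{Pic}^0$ of the fibre is trivial) the class $(K_X+\Delta)|_{X_{K(S)}}$ is $\simQ 0$. Applying Lemma~\ref{lem:relativetrivial} to $L=-(K_X+\Delta)$ produces a big open $S^\circ$ and a pseudo-effective $\mathbb Q$-divisor $D$ with $(K_X+\Delta)|_{f^{-1}(S^\circ)}\simQ f^*(-D)|_{S^\circ}$; writing $\overline{D^\circ}=-D$ we have that $-\overline{D^\circ}$ is pseudo-effective and that hypothesis (C2) of the canonical bundle formulae holds. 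Now I would feed this into the appropriate formula: Theorem~\ref{thm:base-sep-A} when $f$ is inseparable and $a_S$ separable (giving $\overline{D^\circ}\geQ\tfrac{1}{2p}K_S$), the big-open version of Theorem~\ref{thm:sep-cb-formula} when $f$ is separable (giving $\tau_2^*\tau_1^*\overline{D^\circ}\simQ aK_{\bar T'}+b\tau_2^*K_{\bar T}+c\tau_2^*\tau_1^*K_S+E_{\bar T'}$ with $c>0$), and Theorem~\ref{thm:insep-A} when both are inseparable.

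The unifying point is a salience argument for the pseudo-effective cone. Since $\bar T,\bar T'$ are finite over the maximal-Albanese-dimensional $S$, they are themselves of maximal Albanese dimension, so $K_S,K_{\bar T},K_{\bar T'}\geQ 0$ by Proposition~\ref{prop:char-abel-var}(1). Each formula then exhibits $\overline{D^\circ}$ (or its finite pullback) as $\geQ cK_S+(\text{effective})$ with $c>0$, while $-\overline{D^\circ}$ is pseudo-effective. A $\mathbb Q$-effective divisor whose negative is pseudo-effective must be $\simQ 0$, as one sees by intersecting with $H^{\dim S-1}$ for an ample $H$; applying this first to conclude $\overline{D^\circ}\simQ 0$ and then to $cK_S+(\text{effective})$ forces $K_S\simQ 0$, hence $K_S\equiv 0$. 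Descending along the finite purely inseparable maps $\tau_i$ in the separable-$f$ case is harmless, and Proposition~\ref{prop:char-abel-var}(3) then shows $S$ is an abelian variety.

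It remains to treat the relatively anti-ample case, where $-(K_X+\Delta)$ is $f$-ample of positive fibre-degree; here I would reduce to Proposition~\ref{prop:str-num0}, which already outputs ``$S$ is an abelian variety'' once one supplies a movable linear system $\mathfrak M$ with no fixed part, $\deg_{K(S)}\mathfrak M>0$, and $-(K_X+M_0+\Delta)$ nef. I expect this construction to be the main obstacle: the only natural choice is $M_0\simQ -(K_X+\Delta)$, which requires $-(K_X+\Delta)$ to be $\mathbb Q$-linearly movable, and the sign constraint forbids repairing a merely nef class by an ample base-twist, since adding $\epsilon f^*H$ pushes $-(K_X+M_0+\Delta)$ to the wrong side of nef. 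So the crux is to promote the relative ampleness of $-(K_X+\Delta)$ to a globally movable class with nef log-anticanonical complement. A secondary delicate point is the inseparable-$a_S$ subcase of the previous step, where Theorem~\ref{thm:insep-A} delivers Kodaira-dimension inequalities rather than a clean $\geQ cK_S$, so one must combine its refined conclusions with the extra analysis of purely inseparable base changes flagged in the introduction in order to still force $K_S\equiv 0$.
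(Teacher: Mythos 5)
Your case division (relatively trivial versus relatively anti-ample on the generic fibre) matches the paper's, and your treatment of the relatively trivial case when at least one of $f$, $a_S$ is separable is essentially the paper's argument. But the two points you flag as "obstacles" are genuine gaps, and they are precisely where the paper does something you did not anticipate. First, the anti-ample case: you are right that trying to realize $M_0\simQ -(K_X+\Delta)$ as a movable system and feed it into Proposition~\ref{prop:str-num0} is a dead end, but the paper never attempts this. Instead it \emph{perturbs}: for $H$ ample on $S$ and $0<\epsilon\ll1$, the divisor $-(K_X+\Delta)+\epsilon f^*H$ is nef and big, hence by Lemma~\ref{lem:nefbigdiv} it is $\simQ\Delta_\epsilon$ with coefficients so small that $(X_{K(S)},(\Delta+\Delta_\epsilon)_{K(S)})$ is still klt with maximal coefficient $\le\theta<1$ \emph{independent of} $\epsilon$. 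Then $K_X+\Delta+\Delta_\epsilon\simQ \epsilon f^*H$ is exactly the log-trivial hypothesis of the canonical bundle formulas (Theorem~\ref{thm:sep-cb-formula}, \ref{thm:base-sep-A}, \ref{thm:insep-A}), whose lower bounds on the coefficient of $K_S$ ($c\ge c_0>0$ depending only on $\theta$, or $\frac{1}{2p}$) do not degenerate as $\epsilon\to0$; letting $\epsilon\to0$ forces $K_S\equiv0$. So no movable system is needed in this case at all.

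Second, and more seriously: the case where both $f$ and $a_S$ are inseparable cannot be settled by any pseudo-effectivity argument of the kind you describe, because there the target "$K_S\equiv0$" is \emph{false}: $a_S$ inseparable forces $\kappa(S)\ge1$ by Proposition~\ref{prop:char-abel-var}(1). What Theorem~\ref{thm:insep-A} plus your intersection argument actually yields is only that $X_{K(S_1)}$ is non-reduced and $S_1'=S_1\cong A^{\oneoverp}$ is abelian; the theorem must then be finished by showing this configuration is self-contradictory. That is the entire second half of the paper's proof, absent from your proposal: by Proposition~\ref{prop:basic-of-X-S-A}(4) the non-reducedness of $X_{K(S_1)}$ produces a movable linear system $\mathfrak M$ --- the movable part of $\lvert\det\Omega^1_{X_1/X}\rvert$, coming from inseparability rather than from $-(K_X+\Delta)$ --- with $\deg_{K(S_1)}\mathfrak M>0$ and $-(K_{X_1}+M_0+\Delta_1)=-\pi^*(K_X+\Delta)$ nef; Proposition~\ref{prop:str-num0} (applied after verifying its condition (3) via Lemma~\ref{lem:log-adj}) then gives a fibration $g\colon X_1\to\mathbb P^1$ whose general fibers are abelian varieties on which $\det\mathcal F_{X_1/X}$ is $\mathbb Q$-trivial; and Lemma~\ref{lem:fol-fibr} descends this to show that the pushed-down foliation on $A^{\oneoverp}$ is generated by global vector fields, hence its quotient $S$ is an abelian variety --- contradicting $\kappa(S)\ge1$, so the configuration never occurs. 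Thus Proposition~\ref{prop:str-num0} is indeed used, but in the opposite case from where you placed it and with a movable system of a completely different origin; your closing sentence deferring this to "extra analysis of purely inseparable base changes" is a placeholder for what is in fact the core of the proof.
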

		
		\subsection{A preliminary lemma}
		\begin{lem}\label{lem:fol-fibr}
		Let $X$ be a normal projective variety equipped with two fibrations:
		\[
		\xymatrix@R4ex@C4ex{ X\ar[r]^g \ar[d]_f & \mathbb P^1 \\  A}
		\]
		where $A$ and a general fiber $G_t$ of $g$ are abelian varieties for which the induced map $G_t\to A$ is finite and dominant.
		Let $\mathcal{F}$ be a foliation on $X$ and assume that $\det(\mathcal F)|_{G_t} \simQ 0$ for a general $t\in \mathbb P^1$.
		Then the ``pushing down'' foliation $\mathcal G$ of $\mathcal F$ along $f$ (see \S\ref{sec:pushing-down}) is generated by a subspace of $H^0(A,\mathcal T_A)$.
		In particularly, $A/\mathcal G$ is an abelian variety.
		\end{lem}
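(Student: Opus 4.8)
The plan is to realize the quotient $\bar A:=A/\mathcal G$ together with the induced finite purely inseparable morphism $\sigma\colon A\to\bar A$ of height one, and to reduce the whole statement to the single numerical equality $K_{\bar A}\equiv 0$. Applying Proposition~\ref{prop:can-foliation} to $\sigma$ gives $\sigma^*K_{\bar A}\sim K_A-(p-1)\det\mathcal G$; since $A$ is an abelian variety $K_A\equiv 0$, so $\sigma^*K_{\bar A}\equiv -(p-1)\det\mathcal G$, and because $\sigma$ is finite this yields $\det\mathcal G\equiv 0\iff K_{\bar A}\equiv 0$. Now $\bar A$ is normal by the foliation correspondence of \S\ref{subsec:fol-bsch}, projective since $\sigma$ is finite, and of maximal Albanese dimension because $A\xrightarrow{\sigma}\bar A\to A^{(1)}$ composes to the relative Frobenius with $A^{(1)}$ abelian. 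Hence once $K_{\bar A}\equiv 0$ is established, Proposition~\ref{prop:char-abel-var}~(3) forces $\bar A$ to be an abelian variety, so $\sigma$ is an isogeny and $\mathcal G=\ker(d\sigma)$ is translation-invariant, i.e.\ generated by a subspace $W\subseteq H^0(A,\mathcal T_A)$; this is exactly the conclusion. One inequality is free: $\mathcal G$ is a saturated subsheaf of the trivial bundle $\mathcal T_A=\mathcal O_A\otimes H^0(A,\mathcal T_A)$, so $\mathcal T_A/\mathcal G$ is globally generated and $-\det\mathcal G$ is nef (equivalently $K_{\bar A}$ is $\mathbb Q$-effective, as also predicted by Proposition~\ref{prop:char-abel-var}~(1)). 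The content is the reverse estimate, i.e.\ showing $\det\mathcal G\succeq_{\mathbb Q}0$, which together with the anti-nef bound gives $\det\mathcal G\equiv 0$.

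This is where the hypothesis $\det(\mathcal F)|_{G_t}\simQ 0$ should be used, by restricting to a general fibre $G_t$ of $g$. Since $G_t$ and $A$ are abelian varieties of the same dimension and $\phi_t:=f|_{G_t}\colon G_t\to A$ is finite and dominant, $\phi_t$ is an isogeny (a homomorphism up to translation), so $\phi_t^*N\equiv 0\iff N\equiv 0$ for a divisor $N$ on $A$; it therefore suffices to prove $\det(\phi_t^*\mathcal G)=\det\bigl((f^*\mathcal G)|_{G_t}\bigr)\equiv 0$. Pulling back the relation $-\det\mathcal G$ nef from $A$, this class is anti-nef on $G_t$, so I only need the matching lower bound. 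For that I would invoke the characterization of Lemma~\ref{push-foliation}~(1): $\mathcal G$ is minimal with $\eta(\mathcal F)\subseteq f^*\mathcal G$, where $\eta\colon\mathcal F\to f^*\mathcal T_A$ is the composition $\mathcal F\hookrightarrow\mathcal T_X\to f^*\mathcal T_A$. In the favourable case where $\eta$ is generically injective and $\eta(\mathcal F)$ already has full rank $\rank\mathcal G$, the maps $\mathcal F\twoheadrightarrow\eta(\mathcal F)\subseteq f^*\mathcal G$ of sheaves of equal rank give $\det\mathcal F\preceq\det\eta(\mathcal F)\preceq\det f^*\mathcal G$, each step differing by an effective degeneracy divisor; restricting to $G_t$ and using $\det\mathcal F|_{G_t}\simQ 0$ yields $\det\bigl((f^*\mathcal G)|_{G_t}\bigr)\succeq_{\mathbb Q}0$, and the squeeze closes.

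The main obstacle lies precisely in the words ``full rank''. By minimality $f^*\mathcal G$ is the smallest subsheaf pulled back from $A$ — hence constant along the fibres of $f$ — containing the a priori fibre-varying sheaf $\eta(\mathcal F)$, so in general $\rank\mathcal G>\rank\eta(\mathcal F)$ and the clean determinant chain above does not apply. To overcome this I would pass to the generic fibre $X_\xi$ of $f$, describe $\mathcal G_\xi$ explicitly as the $K(A)$-span of the values of $\eta(\mathcal F)$ along the curve $X_\xi$, and argue that the extra sections needed to saturate this span contribute only \emph{effective} degeneracy divisors to $\det\mathcal G$, so that the bound $\det\bigl((f^*\mathcal G)|_{G_t}\bigr)\succeq_{\mathbb Q}0$ survives the rank jump. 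A secondary technical point to monitor is the possible inseparability of $\phi_t$, together with the rank-one kernel $\mathcal F\cap\mathcal T_{X/A}$ of $\eta$, where the relevant differentials degenerate; I expect to absorb these by factoring through the appropriate Frobenius and tracking the resulting conductor-type divisors, which are again effective and hence do not disturb the sign of the squeeze.
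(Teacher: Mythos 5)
Your proposal has two sound pieces and one fatal hole. The sound pieces: the endgame (if $\det\mathcal G\equiv 0$, then $K_{A/\mathcal G}\equiv 0$ by Proposition~\ref{prop:can-foliation}, so $A/\mathcal G$ is an abelian variety by Proposition~\ref{prop:char-abel-var}~(3), and a height-one purely inseparable morphism of abelian varieties is, up to translation, an isogeny, whence $\mathcal G$ is translation-invariant), and the easy inequality ($\mathcal T_A/\mathcal G$ is a globally generated quotient of the trivial bundle, so $-\det\mathcal G\succeq 0$, hence nef on $A$). This reduction is genuinely different from the paper, which never passes through $K_{A/\mathcal G}$ but instead constructs constant generators of $\mathcal G$ directly. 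Note, however, that the reduction does not actually lessen the work: by Lemma~\ref{trivibdl}, a saturated subsheaf of $\mathcal T_A\cong\mathcal O_A^{\oplus n}$ with $\det\equiv 0$ (hence $\sim 0$, since the determinant is anti-effective) is precisely a free, i.e.\ constant, subsheaf --- so your target ``$\det\mathcal G\equiv 0$'' is equivalent to the conclusion of the lemma, not a stepping stone toward it.

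The genuine gap is the lower bound $\det\mathcal G\succeq_{\mathbb Q}0$ (equivalently $\det(\phi_t^*\mathcal G)\succeq_{\mathbb Q}0$), which you establish only in the ``favourable case'' where $\eta$ is generically injective and $\rank\eta(\mathcal F)=\rank\mathcal G$, while acknowledging that in general the rank jumps. At that point your plan --- ``the extra sections needed to saturate the span contribute only effective degeneracy divisors to $\det\mathcal G$'' --- is a statement of intent, not an argument: determinants of sheaves of different rank admit no such comparison, and \emph{both} rank jumps (from $\eta(\mathcal F)$ to its $K(A)$-span, and from that span to its $p$-closed, involutive closure $\mathcal G$) come with no a priori determinant estimate you can quote. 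Making your phrase precise requires showing that the saturating sections can be chosen \emph{constant} along $A$, and that is exactly the missing key step, which the paper supplies as its Claim: on a big open subset $G_t^\circ$ of a general fiber, using that $\mathcal T_{G_t^\circ}$ and $\mathcal N_{G_t/X}|_{G_t^\circ}$ are trivial, two applications of Lemma~\ref{trivibdl} convert the hypothesis $\det(\mathcal F)|_{G_t}\simQ 0$ into the statement that $\im\eta_t$ is a \emph{free} sheaf, hence generated by sections corresponding to $\beta_{t,i}\in H^0(A,\mathcal T_A)$; then $\mathcal G$ is identified (via the minimality in Lemma~\ref{push-foliation}) with the $p$-Lie subalgebra closure of the span of all the $\beta_{t,i}$, which is again constant --- so both rank jumps become harmless by construction rather than by any determinant inequality. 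The same freeness mechanism is what controls the kernel $\mathcal F\cap\mathcal T_{X/A}$ of $\eta$ and the possible inseparability of $\phi_t$, which you likewise defer (``I expect to absorb these''). Without this step your squeeze does not close.
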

		\begin{proof}
		Recall from Lemma~\ref{push-foliation} that, under the natural $\mathcal{O}_X$-linear homomorphism $\eta\colon \mathcal F \subseteq \mathcal T_X \to f^*\mathcal T_A$,
		the foliation $\mathcal{G}$ is the minimal foliation on $A$ such that $\eta(\mathcal{F}) \subseteq f^*\mathcal G$ holds generically.
		
		\smallskip
		Claim. {\it For a general $t\in \mathbb P^1$, there exists a big open subset $G_{t}^\circ$ of $G_t$ such that the image of $\eta_t\colon \mathcal F|_{G_t^\circ}\to (f|_{G_t^\circ})^*\mathcal T_A$ is a free sheaf.}
		
		\begin{proof}[Proof of the claim]
			Let $G_t$ be a general fiber. Since both $X$ and $G_t$ are normal, there exists a big open subset $G_t^\circ \subset G_t$ such that $X$ is regular along $G^\circ_t$, and that both $\mathcal{F}|_{G^\circ_t}$ and $(\mathcal T_X/\mathcal F)|_{G_t^\circ}$ are locally free.
			Note that the normal bundle $\mathcal{N}_{G_t/X}|_{G^\circ_t} \cong \mathcal{O}_{G^\circ_t}$.
			By assumption $\mathcal T_{G_t^\circ} \cong \bigoplus^n\mathcal O_{G_t^\circ}$ where $n := \dim A$.
			Then we have the following commutative diagram of $\mathcal{O}_{G^\circ_t}$-linear homomorphisms
			\[\xymatrix@R=2.5ex@C=3ex@M=6pt{
				&\bigoplus^{n}\mathcal{O}_{G^\circ_t}\ar@{}[d]|{\rotatebox{-90}{$\cong$}}&&\mathcal{O}_{G^\circ_t}\ar@{}[d]|{\rotatebox{-90}{$\cong$}}\\
				\noalign{\vskip-1ex}
				0\ar[r] & \mathcal T_{G^\circ_t}  \ar[r] & \mathcal T_X|_{G^\circ_t} \ar[r] & \mathcal{N}_{G_t/X}|_{G^\circ_t} \ar[r] & 0\\
				0\ar[r]&\mathcal{F}|_{G^\circ_t}\cap \mathcal T_{G^\circ_t}\ar@{_(->}[u]\ar[r]&\mathcal{F}|_{G^\circ_t}\ar@{_(->}[u]\ar[ur]_{\theta} \ar[r]&\mathrm{Im}\,\theta\ar[r]\ar@{_(->}[u]&0
			}\]
			where the two horizontal sequences are exact.
			We have $\det (\mathcal{F}|_{G^\circ_t}\cap \mathcal T_{G^\circ_t}) \leL 0$ and $\det(\im\theta)\leL 0$ since they are subsheaves of free sheaves.
			Therefore, the assumption $\det(\mathcal{F}|_{G^\circ_t}) \simQ 0$ implies that $\det (\mathcal{F}|_{G^\circ_t}\cap \mathcal T_{G^\circ_t}) \sim 0$ and $\det(\im\theta) \sim 0$.
			By shrinking $G_t^\circ$, Lemma~\ref{trivibdl} implies that both $\mathcal{F}|_{G^\circ_t}\cap \mathcal T_{G^\circ_t}$ and $\mathrm{Im}\,\theta$ are free sheaves.
			That is to say, $\mathcal F|_{G_t^\circ}$ is an extension of free sheaves.
			Denote by $\mathcal K$ the kernel of $\eta_t$. Then $\mathcal K$ is a subsheaf of a sheaf which is an extension of free sheaves, thus $\det \mathcal K\leL 0$.
			Since $(\mathcal F|_{G_t^\circ})/\mathcal K \cong \im \eta_t \subseteq \bigoplus^{n}\mathcal{O}_{G^\circ_t}$, we have  $\det ((\mathcal F|_{G_t^\circ})/\mathcal K) \leL 0$. Therefore by $\det(\mathcal{F}|_{G^\circ_t}) \equiv 0$, we conclude that $\det \mathcal K \sim \det(\im\eta_t) \sim 0$.
			Applying Lemma~\ref{trivibdl} again, if necessary replacing $G_t^{\circ}$ with a big open subset, we see that $\im\eta_t$ is a free sheaf.
		\end{proof}
		
		\medskip
		
		There exists a non-empty open subset $V\subseteq \mathbb P^1$ such that the claimed assertion holds for all $t\in V$. Fix  $t\in V$, there exist $\alpha_{t, 1}, \ldots, \alpha_{t, k_t}\in H^0(G_t^{\circ}, \im\eta_t)$ such that $\im\eta_t =\bigoplus_{i=1}^{k_t}\mathcal{O}_{G^\circ_t}\cdot\alpha_{t,i}$.
		Since $G_t^{\circ}$ is a big open subset of $G_t$, we have a natural isomorphism $H^0(A,\mathcal T_A = \mathcal{O}_A^n) \cong H^0(G_t^\circ, f_t^*\mathcal T_A|_{G_t^\circ})$, which is induced by the pullback of the map $f_t:=f|_{G_t^\circ}\colon G_t^\circ \to A$. Let $\beta_{t,i} \in H^0(A,\mathcal T_A)$ be the element corresponding to $\alpha_{t, i}$. Let $\mathcal G_t$ be the subsheaf of $\mathcal T_A$ generated by $\{\beta_{t,i}\}$.
		
		Set $\mathcal{G}' := \sum_{t\in V} \mathcal{G}_t$. It follows that $\mathcal{G}'$ is globally generated by $\Lambda'=H^0(A, \mathcal{G}') \subseteq H^0(A,\mathcal T_A)$. Here we remind that $\Lambda'$ is not necessarily a $p$-Lie subalgebra of $H^0(A,\mathcal T_A)$, because it is not necessarily $p$-closed, in other words, $\mathcal{G}'$ is not necessarily a foliation. Let $\bar{\Lambda}$ be the $p$-Lie subalgebra of $H^0(A,\mathcal T_A)$ generated by $\Lambda'$, which corresponds to a smooth foliation $\overline{\mathcal{G}}$ on $A$. Note that $\overline{\mathcal{G}}$ is the minimal foliation containing $\mathcal{G}'$.

		To finish the proof, we are left to verify that $\overline{\mathcal{G}} = \mathcal{G}$.
		On one hand,  for  $t\in V$, by the construction, $\mathcal G_t$ is the smallest the subsheaf of $\mathcal T_A$ such that $\im\eta_t \subseteq f^*\mathcal G_t|_{G_t^\circ} $, thus $\mathcal G_t \subseteq \mathcal{G}$,
		we conclude that $\mathcal{G}' \subseteq \mathcal{G}$, which implies that $\overline{\mathcal{G}} \subseteq \mathcal{G}$.
		On the other hand, over a nonempty open subset $U \subseteq g^{-1} V$ we have $\eta(\mathcal{F})|_U\subseteq f^*\overline{\mathcal{G}}|_U$,
		but $\mathcal{G}$ is the minimal foliation in this sense, therefore $\mathcal{G} \subseteq \overline{\mathcal{G}}$. To summarize, we obtain that $\overline{\mathcal{G}} = \mathcal{G}$ and finish the proof.
		\end{proof}
		\begin{lem}\label{trivibdl}
		Let $X$ be a regular variety satisfying $H^0(X,\mathcal O_X) = k$, and let $\mathcal{F}$ be a subsheaf of $\bigoplus^{n}\mathcal{O}_X$.
		Then $\det \mathcal{F} \preceq 0$.
		
		Moreover, if $\det \mathcal{F} \sim 0$, then $\mathcal F^{\vee\vee} \cong \bigoplus^r \mathcal O_X$ for some integer $r$.
		\end{lem}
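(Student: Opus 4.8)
The plan is to treat the two assertions separately, in both cases turning the hypothesis $H^0(X,\mathcal O_X)=k$ into the following principle: a rational function on $X$ with no poles lies in $H^0(X,\mathcal O_X)=k$, so it is constant; in particular an effective divisor linearly equivalent to $0$ is $0$, and a nowhere-vanishing global function is a nonzero constant. Throughout, since $X$ is regular, every subsheaf of a free sheaf is torsion-free and locally free on a big open subset, so its determinant is a rank-one reflexive (divisorial) sheaf.

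For $\det\mathcal F\preceq0$, set $r=\rank\mathcal F$. Applying $\bigwedge^r$ to $\mathcal F\hookrightarrow\mathcal O_X^{\oplus n}$ and factoring through the reflexive hull (the target $\bigwedge^r\mathcal O_X^{\oplus n}\cong\mathcal O_X^{\oplus\binom nr}$ being reflexive) yields a map $\det\mathcal F\to\mathcal O_X^{\oplus\binom nr}$. It is injective because it is nonzero at the generic point---the top exterior power of an injection of vector spaces is injective---and $\det\mathcal F$ is torsion-free of rank one. Composing with a suitable coordinate projection gives a nonzero, hence injective, map $\det\mathcal F\to\mathcal O_X$, that is, a nonzero section of $(\det\mathcal F)^\vee=\mathcal O_X(-\det\mathcal F)$; thus $-\det\mathcal F\succeq0$.

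For the \emph{moreover} part I would first reduce to the case where $\mathcal F$ is saturated in $\mathcal O_X^{\oplus n}$. Let $\mathcal G$ be the saturation, so $\mathcal G/\mathcal F$ is torsion and $\mathcal O_X^{\oplus n}/\mathcal G$ is torsion-free; comparing determinants gives $\det\mathcal F=\det\mathcal G-E$ with $E\ge0$. The first part applied to $\mathcal G$ gives $\det\mathcal G\preceq0$, while $\det\mathcal F\sim0$ forces $E\sim\det\mathcal G\preceq0$; being effective, $E$ must then vanish by the constant-function principle, so $\mathcal F$ and $\mathcal G$ agree in codimension one and have the same dual and reflexive hull. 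Replacing $\mathcal F$ by $\mathcal G$, there is a big open subset $U$ on which $\mathcal F$ is a subbundle with locally free quotient; dualizing the exact sequence of bundles on $U$ shows $\mathcal O_U^{\oplus n}\twoheadrightarrow\mathcal F^\vee|_U$, so the reflexive sheaf $\mathcal F^\vee$ is globally generated on $U$ and satisfies $\det\mathcal F^\vee\cong\mathcal O_X$. Taking $\bigwedge^r$ of this surjection produces a surjection onto $\det(\mathcal F^\vee|_U)\cong\mathcal O_U$ whose generating sections are the $r\times r$ minors of the chosen $n$ sections; these minors are global functions on $U$, hence constants, and surjectivity forces one of them to be a nonzero constant. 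The corresponding $r$ sections then define a map $\mathcal O_U^{\oplus r}\to\mathcal F^\vee|_U$ with nowhere-vanishing determinant, i.e.\ an isomorphism, and pushing forward from the big open $U$ gives $\mathcal F^\vee\cong\mathcal O_X^{\oplus r}$, whence $\mathcal F^{\vee\vee}\cong\mathcal O_X^{\oplus r}$.

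The hard part is the \emph{moreover} statement, for two linked reasons: the inclusion $\mathcal F\hookrightarrow\mathcal O_X^{\oplus n}$ naturally produces sections of $\mathcal F^\vee$ rather than of $\mathcal F$, so one must dualize and prove the triviality of $\mathcal F^\vee$; and the minor argument needs honest global generation on a big open subset, not merely at the generic point, which is precisely what the reduction to the saturated case (via $E=0$) secures. Both of these steps, together with the final ``a nonzero minor is a unit'' observation, are exactly where $H^0(X,\mathcal O_X)=k$ is used.
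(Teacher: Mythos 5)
Your proof is correct, but it takes a genuinely different route from the paper's. The paper argues by induction on $n$: after replacing $\mathcal F$ by $\mathcal F^{\vee\vee}$, it intersects $\mathcal F$ with the span of the first $l$ coordinates $W=\bigoplus^{l}\mathcal O_X\subset\bigoplus^{l+1}\mathcal O_X$, notes via the Snake Lemma that $\mathcal F/(\mathcal F\cap W)$ embeds into $\mathcal O_X$, applies the induction hypothesis to both pieces to get $\det\mathcal F\preceq 0$, and, for the \emph{moreover} part, splits the resulting extension by hand: once $\mathcal F\cap W\cong\bigoplus^{r}\mathcal O_X$ and $\mathcal F/(\mathcal F\cap W)\cong\mathcal O_X$ are known, a $k$-linear splitting of $H^0(X,\mathcal F\cap W)\subseteq H^0(X,W)$ induces a sheaf splitting $W\to\mathcal F\cap W$, giving $\mathcal F\cong\bigl(\bigoplus^{r}\mathcal O_X\bigr)\oplus\mathcal O_X$ directly. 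You instead avoid induction entirely: the first part comes from a single nonzero map $\det\mathcal F\to\mathcal O_X$ obtained from $\bigwedge^r$ of the inclusion, and the \emph{moreover} part proceeds by saturating $\mathcal F$ (using the principle that an effective divisor linearly equivalent to $0$ must vanish when $H^0(X,\mathcal O_X)=k$), dualizing on a big open subset to get $\mathcal O_U^{\oplus n}\twoheadrightarrow\mathcal F^\vee|_U$, observing that the wedge sections of $\det(\mathcal F^\vee|_U)\cong\mathcal O_U$ are constants, and extracting an isomorphism $\mathcal O_U^{\oplus r}\to\mathcal F^\vee|_U$ from a nonvanishing one before pushing forward. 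Both proofs consume the hypothesis $H^0(X,\mathcal O_X)=k$ at the analogous spot (for you: constants generate only if one is a unit; for the paper: sections of trivial subsheaves form split $k$-subspaces). What each buys: your argument handles the reflexivity bookkeeping cleanly by working with $\mathcal F^\vee$ on big opens and pushing forward, at the cost of the saturation reduction, which, as you rightly flag, is genuinely necessary (without it, $\mathcal O_X^{\oplus n}\to\mathcal F^\vee$ need not be surjective in codimension one, e.g.\ for $\mathcal F=\mathcal O_X(-D)\subset\mathcal O_X$); the paper's induction is more elementary and yields the direct-sum decomposition explicitly, though it must shrink $X$ at the end to keep the intermediate sheaves reflexive, a wrinkle your big-open formulation dispenses with.
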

		\begin{proof}
		By replacing $\mathcal F$ with $\mathcal F^{\vee\vee}$, we may assume that $\mathcal F$ is reflexive. 
		We may also assume that $\rank \mathcal F \ge 1$.
		If $n=1$, then $\mathcal F \hookrightarrow \mathcal O_X$ is a subsheaf of rank one and locally free in codimension one.
		Thus $\mathcal F$ defines an effective divisor $D$ such that $\mathcal F\cong \mathcal O_X(-D)$.
		So the statements hold for $n=1$.
		We then prove the lemma by induction on $n$.
		Assume the assertion holds for $n\leq l$ where $l>0$. We prove it for $n=l+1$.
		Write $\mathcal W:=\bigoplus^{l}\mathcal{O}_X \subset \bigoplus^{l+1}\mathcal{O}_X$ for the sum of the first $l$ summands.
		We may assume that $\mathcal{F} \not\subseteq \mathcal W$.
		We have the following commutative diagram
		\[
		\xymatrix@R=3.5ex@C=3ex@M=6pt{
			0\ar[r]&  \mathcal{F}\cap \mathcal W \ar[r]\ar@{^(->}[d] & \mathcal{F}\ar[r] \ar@{^(->}[d]& \mathcal{F}\smash{/(\mathcal F\cap \mathcal W)} \ar[r] \ar@{^(->}[d]&0\\
			0\ar[r]& \mathcal W\ar[r]\ar@{-->}@/^6mm/[u]^{\theta}&\smash{\bigoplus^{l+1}}\mathcal{O}_X\ar[r]\ar@/_5mm/[l]_{\pi}&\mathcal{O}_X\ar[r]&0,
		}
		\]
		where $\pi$ is the projection onto $\mathcal W$.
		We remark that the third column $\mathcal F/(\mathcal F\cap \mathcal W)\to \mathcal O_X$ is injective by Snake Lemma; hence $\mathcal F/(\mathcal F\cap \mathcal W)$ is torsion free.
		Thus, $\rank \mathcal F \cap \mathcal W < \rank \mathcal F$.
		Now, by induction hypothesis, $\det (\mathcal{F}\cap \mathcal W)\preceq 0$ and $\det(\mathcal F/(\mathcal F\cap \mathcal W))\preceq 0$.
		So we have $\det \mathcal F \preceq 0$.
		
		Moreover, if $\det \mathcal{F} \sim 0$, we conclude that
		$\det (\mathcal{F}\cap \mathcal W)\sim \det ( \mathcal F/(\mathcal F\cap \mathcal W))\sim 0$.
		By the induction hypothesis we have $\mathcal F/(\mathcal F\cap \mathcal W)\cong \mathcal{O}_X$ and $(\mathcal{F}\cap \mathcal W) \cong \bigoplus^{r}\mathcal{O}_X$ for some $r$ (here we may shrink $X$ so that these sheaves are reflexive).
		Regard $H^0(X,\mathcal{F}\cap \mathcal W)\cong k^{r}$ as a $k$-linear subspace of $H^0(X,\mathcal W)\cong k^{l}$. Take a splitting $\theta_{k}\colon H^0(X,\mathcal W)\to H^0(X,\mathcal{F}\cap \mathcal W)$, which determines uniquely a splitting $\theta\colon \mathcal W\to \mathcal{F}\cap \mathcal W$ of $\mathcal{F}\cap \mathcal W \hookrightarrow \mathcal W$.
		Thus the composition of homomorphisms $\mathcal{F}\hookrightarrow \bigoplus^{k+1}\mathcal{O}_X\stackrel{\pi}{\to} \mathcal W \stackrel{\theta}{\to} \mathcal{F}\cap \mathcal W$ splits $\mathcal{F}\cap \mathcal W\to \mathcal{F}$.
		As a consequence $\mathcal{F} \cong (\mathcal{F}\cap \mathcal W)\oplus (\mathcal{F}+\mathcal W) /\mathcal W \cong \bigoplus^{r}\mathcal{O}_X \oplus \mathcal{O}_X$, which completes the proof.
		\end{proof}

		\subsection{Proof of Theorem~\ref{thm:S-abelian}}
		We use the notation from Section~\ref{sec:nota-sett} and recall the following commutative diagram
		$$\xymatrix@C=1.5cm{ X_1\ar[rd]^{f_1}\ar[d]_{f_1'} \ar@/^1.5pc/[rr]|{\,\pi\,}\ar[r] &X_{S_1}\ar[r]\ar[d] &X\ar[d]^{f}\\
		S_1'\ar[r]^{\tau'_1}\ar[rd]_{a_{S_1'}}   &S_1\ar[r]^{\tau_1} \ar[d]^{a_{S_1}}  &S\ar[d]^{a_S} \\
		&A^{\oneoverp} \ar[r]^{F_A}          &A\rlap{.}
		}$$
		
		First, in the following lemma, we deal with the cases where one of $f$ and $a_S$ is separable.
		\begin{lem}\label{lem:S-abelian-partI}
		Under the assumptions of Theorem~\ref{thm:S-abelian}, $S$ is an abelian variety unless the following conditions hold simultaneously
		\begin{enumerate}[\rm(i)]
			\item both the morphisms $f$  and $a_S$ are inseparable;
			\item $X_{K(S_1)}$ is non-reduced; and
			\item $a_{S_1}\colon S_1 \to A^{\frac1p}$ is an isomorphism.
		\end{enumerate}
		\end{lem}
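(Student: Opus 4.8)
The plan is to reduce everything to showing $K_S\equiv0$ and then invoke Proposition~\ref{prop:char-abel-var}(3). Since $a_S\colon S\to A_X$ is finite, $S$ is of maximal Albanese dimension, so $K_S\geQ0$ by Proposition~\ref{prop:char-abel-var}(1); hence it suffices to produce a pseudo-effective representative of $-K_S$, because a class that is both pseudo-effective and anti-pseudo-effective is numerically trivial. Restricting $-(K_X+\Delta)$ to the generic fibre and using that $(X_{K(S)},\Delta_{K(S)})$ is klt gives $\deg_{K(S)}(K_X+\Delta)\le0$, and I would split the argument according to whether this degree is strictly negative or zero. Throughout I will use the elementary fact that, for a finite purely inseparable $\tau$, pullback and pushforward preserve pseudo-effectivity and $\tau^*E\equiv0\iff E\equiv0$, and that $\kappa(S,D)=\kappa(S_1,\tau_1^*D)$ by the Covering Theorem~\ref{thm:covering}.

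Suppose first that $\deg_{K(S)}(K_X+\Delta)<0$, so the generic fibre has arithmetic genus zero and $L:=-(K_X+\Delta)$ is nef and relatively ample over $S$. Here I would build a movable linear system $\mathfrak M$ of positive fibre degree whose general member $M_0$ keeps $-(K_X+M_0+\Delta)=L-M_0$ nef, and then apply Proposition~\ref{prop:str-num0} (whose hypothesis~(3) holds since $(X_{K(S)},\Delta_{K(S)})$ is klt) to conclude directly that $S$ is abelian. Producing such an $\mathfrak M$ — a movable ``fraction'' of the nef, relatively ample class $L$ whose nef complement survives — is the \emph{main obstacle}. The relative anti-log-canonical series on the conic bundle $f$ yields, after twisting by an ample divisor $A_S$ from $S$, movable systems of class $L+f^*A_S$, but these have \emph{anti-nef} residual $-f^*A_S$; one must instead extract a sub-system of smaller fibre degree for which $L-M_0$ stays nef (in the model $X=\mathbb P^1\times A$ this is the second ruling $g^*|\mathcal O(1)|$, which is exactly the structure Proposition~\ref{prop:str-num0}(ii) is meant to \emph{output}). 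This relatively-ample regime is invisible to the canonical bundle formulas of the previous sections, all of which assume $K_X+\Delta$ is relatively $\mathbb Q$-trivial, so essentially all of the genuine work lives here.

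When $\deg_{K(S)}(K_X+\Delta)=0$ I first check $L|_{X_{K(S)}}\simQ0$: in arithmetic genus one, $\omega_{X_{K(S)}}$ has degree $0$ and $h^0=1$ and is therefore trivial while $\Delta_{K(S)}=0$; for a (possibly non-smooth) conic with $\deg_{K(S)}\Delta_{K(S)}=2$ the relevant Picard group is $\mathbb Z$ by Proposition~\ref{prop:ga0}, so a degree-zero class is $\simQ0$. Then Lemma~\ref{lem:relativetrivial} gives a big open $S^\circ\subseteq S^{\mathrm{reg}}$ and a $\mathbb Q$-divisor $D=\overline{D^\circ}$ with $-D$ pseudo-effective and $(K_X+\Delta)|_{f^{-1}(S^\circ)}\simQ f^*D^\circ$, placing us in the setting of the earlier formulas. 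If $f$ is separable, Theorem~\ref{thm:sep-cb-formula} applied over $S^\circ$ gives $D\geQ cK_S$ with $c>0$ (using klt); if $f$ is inseparable but $a_S$ is separable, Theorem~\ref{thm:base-sep-A} gives $D\geQ\frac1{2p}K_S$. In either case $D\geQ c'K_S\geQ0$ shows $D$ is pseudo-effective, so with $-D$ pseudo-effective we get $D\equiv0$, whence $c'K_S\leQ D\equiv0$ forces $-K_S$ pseudo-effective and $K_S\equiv0$; thus $S$ is abelian.

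It remains to treat the degree-zero regime with $f$ and $a_S$ both inseparable, where I would use Theorem~\ref{thm:insep-A}. If $X_{K(S_1)}$ is integral, part~(a) gives $\tau_1^*D\geQ\tau_1^*K_S-K_{S_1}$ together with $\kappa(S,D)\ge1$; but $-\tau_1^*D$ is pseudo-effective and $\tau_1^*K_S-K_{S_1}\geQ0$ by Proposition~\ref{prop:basic-of-X-S-A}(2), so $\tau_1^*D\equiv0$ and $\kappa(S,D)=0$, a contradiction — hence the integral case cannot occur in the degree-zero regime, so we are in the relatively ample regime and $S$ is abelian. If $X_{K(S_1)}$ is non-reduced, part~(b) gives $\tau_1'^*\tau_1^*D\geQ\frac12K_{S_1'}$, and the same pseudo-effectivity trick forces $\kappa(S,D)=0$ and $K_{S_1'}\equiv0$, so $S_1'$ is abelian by Proposition~\ref{prop:char-abel-var}(3); the birationality clause of Theorem~\ref{thm:insep-A}(b) — which invokes resolution of singularities in dimension $\dim S$ — then yields $S_1=S_1'$, so $S_1$ is abelian. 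Consequently, in the non-reduced case one always \emph{derives} that $S_1$ is abelian, so either condition~(iii) holds (the excluded configuration) or one obtains a contradiction placing us back in the relatively ample regime with $S$ abelian. Assembling the cases, the only situation in which $S$ need not be abelian is the degree-zero regime with $f,a_S$ both inseparable, $X_{K(S_1)}$ non-reduced and $S_1$ abelian — precisely conditions (i)--(iii) — which proves the lemma.
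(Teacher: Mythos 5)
Your degree-zero half essentially reproduces the paper's argument and is sound, but the relatively ample case ($\deg_{K(S)}(K_X+\Delta)<0$) is a genuine gap, and it is exactly where you concede defeat. Your plan there --- manufacture a movable system $\mathfrak M$ with $\deg_{K(S)}\mathfrak M>0$ and $-(K_X+M_0+\Delta)$ still nef, then quote Proposition~\ref{prop:str-num0} --- cannot be completed, and in fact cannot work at all: Proposition~\ref{prop:str-num0} would output that $S$ is abelian, yet the excluded configuration (i)--(iii) does occur in this regime (it is precisely the paper's Case\thinspace(1.3)), and there $S$ is \emph{not} abelian, since $a_S$ is inseparable of degree $>1$ while the Albanese morphism of an abelian variety is an isomorphism. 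So no such $\mathfrak M$ can exist in that sub-case. Your diagnosis that ``this relatively-ample regime is invisible to the canonical bundle formulas'' is the precise misconception: the paper's missing idea is a perturbation trick. Take $H$ ample on $S$ and $0<\epsilon\ll1$; then $-(K_X+\Delta)+\epsilon f^*H$ is nef and big, so by Lemma~\ref{lem:nefbigdiv} it is $\mathbb Q$-linearly equivalent to an effective $\Delta_\epsilon$ with coefficients so small that $(X_{K(S)},(\Delta+\Delta_\epsilon)_{K(S)})$ remains klt with maximal coefficient $\le\theta<1$ \emph{uniformly in} $\epsilon$. Then $K_X+\Delta+\Delta_\epsilon\sim_{\mathbb Q}\epsilon f^*H$ is relatively $\mathbb Q$-trivial, so Theorems~\ref{thm:sep-cb-formula}, \ref{thm:base-sep-A} and \ref{thm:insep-A} apply with $D=\epsilon H$; the constant $c\ge c_0>0$ depends only on $\theta$, so letting $\epsilon\to0$ gives $K_S\equiv0$ (hence $S$ abelian) when $f$ is separable or $a_S$ is separable, while in the both-inseparable case the integral option for $X_{K(S_1)}$ contradicts $\kappa(S_1,\tau_1^*K_S-K_{S_1})\ge1$ (Proposition~\ref{prop:basic-of-X-S-A}(2)) and the non-reduced option forces $K_{S_1'}\equiv0$, hence conditions (i)--(iii).

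Two further consequences of this gap. First, your final assembly is wrong as stated: you conclude that the only situation where $S$ need not be abelian is the \emph{degree-zero} regime with (i)--(iii), but the excluded configuration also arises in the relatively ample regime, so the lemma's ``unless'' clause must cover both; relatedly, your sentence ``hence the integral case cannot occur in the degree-zero regime, so we are in the relatively ample regime'' is a non sequitur --- ruling out integrality within a regime does not move you to the other regime, it puts you in the non-reduced sub-case of the same regime. Second, a minor point in your degree-zero argument: to get $S_1=S_1'$ you invoke the resolution-of-singularities clause of Theorem~\ref{thm:insep-A}(b); this happens to be legitimate because you have already shown $K_{S_1'}\equiv0$, so $S_1'$ is abelian by Proposition~\ref{prop:char-abel-var}(3), in particular smooth, and the hypothesis is vacuously satisfied --- but the paper avoids this detour entirely by using the universal property of the Albanese morphism to identify $S_1'$ with $S_1\cong A^{\oneoverp}$, keeping the lemma unconditional on its face.
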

		\begin{proof}
		We treat the following two cases separately:
		\begin{itemize}
			\item[(1)] $-(K_X+\Delta)|_{X_{K(S)}}$ is ample,
			\item[(2)] $-(K_X+\Delta)|_{X_{K(S)}}\sim_\mathbb Q 0$.
		\end{itemize}
		
		Case\thinspace (1): $-(K_X+\Delta)|_{X_{K(S)}}$ is ample.
		Let $H$ be an ample divisor on $S$ and $0<\epsilon\ll 1$ a rational number.
		Then $-(K_X+\Delta)+\epsilon f^*H$ is nef and big.
		By Lemma~\ref{lem:nefbigdiv}, $-(K_X+\Delta)+\epsilon f^*H\sim_{\mathbb{Q}} \Delta_{\epsilon}$ for some effective $\mathbb{Q}$-divisor  $\Delta_{\epsilon}$ with coefficients small enough, such that $(X_{K(S)},(\Delta+\Delta_{\epsilon})_{K(S)})$ is klt and the maximal value of the coefficients of $(\Delta+\Delta_{\epsilon})_{K(S)}$ is smaller than some number $\theta <1$ (independent of $\epsilon$). By construction
		we have
		$$K_X+\Delta+\Delta_{\epsilon}\sim_{\mathbb{Q}} \epsilon f^*H.$$

		Case\thinspace (1.1):  The fibration $f$ is separable. We can apply Theorem~\ref{thm:sep-cb-formula} to obtain finite purely inseparable morphisms $\tau_1\colon \bar{T} \to S$, $\tau_2\colon \bar{T}' \to \bar{T}$ and effective $\mathbb{Q}$-divisor $E_{\bar T'}$ on $\bar T'$ such that
		\[
		\epsilon \tau_2^* \tau_1^*H \sim_{\mathbb{Q}} a K_{\bar{T}'} + b\tau_2^*K_{\bar{T}} + c\tau_2^*\tau_1^*K_S + E_{\bar T'},
		\]
		with $a,b\ge 0$, and $c\ge c_0>0$ with $c_0$ independent of $\epsilon$.
		Letting $\epsilon\to 0$, we obtain $K_S \equiv 0$, thus $S$ is an abelian variety by Proposition~\ref{prop:char-abel-var}.
		
		\smallskip
		Case\thinspace (1.2): The fibration $f$ is inseparable and $a_S$ is separable. By Theorem~\ref{thm:base-sep-A} we have $\epsilon H \geQ \frac{1}{2p}K_S$.  Then $K_S \equiv 0$, and thus $S$ is an abelian variety.
		
		\smallskip
		Case\thinspace (1.3): Both $f$ and $a_S$ are inseparable.
		
		We first show that $X_{K(S_1)}$ is non-reduced. Otherwise, $X_{K(S_1)}$ is integral, and by Theorem~\ref{thm:insep-A}, we have $\epsilon \tau_1^*H\geQ \tau_1^*K_{S}-K_{S_1} $.
		But this contradicts that $\kappa(S_1,\tau_1^*K_{S}-K_{S_1})\ge 1$ (Proposition~\ref{prop:basic-of-X-S-A} (2)).

		Now assume that $X_{K(S_1)}$ is non-reduced. We define $X_1$ to be the normalization of $(X_{K(S_1)})_{\rm red}$ and denote by $S'_1$ the normalization of $S_1$ in $X_1$. Then by Theorem~\ref{thm:insep-A} we have $\tau'^*_1 \tau^*_1 \epsilon H \geQ \frac12 K_{S_1'}$.
		Letting $\epsilon\to 0$, we see that $K_{S_1'} \equiv 0$ and thus $S_1'$ is an abelian variety by Proposition~\ref{prop:char-abel-var}. By the universal property of the Albanese morphism $X^{\frac1p} \to A^{\frac1p}$, we conclude that $S'_1  = S_1 \cong A^{\oneoverp}$, which means that $f_1\colon X_1 \to S_1$ is a fibration.
		
		\medskip
		Case\thinspace (2): $(K_X+\Delta)|_{X_{K(S)}}\sim_\mathbb Q 0$.
		
		By Lemma~\ref{lem:relativetrivial}, there exists a big open subset $S^{\circ} \subseteq S^{\rm reg}$ and a $\mathbb{Q}$-divisor $D^{\circ}$ on $S^{\circ}$, such that
		$-(K_X+\Delta)|_{f^{-1}(S^{\circ})}\sim_{\mathbb{Q}} f^*D^{\circ}$ and that $D:=\overline{D^\circ}$ is pseudo-effective.
		
		We first show that if at least one of $f$ and $a_S$ is separable, then $S$ is isomorphic to an abelian variety. Indeed, by applying Theorem~\ref{thm:sep-cb-formula} or Theorem~\ref{thm:base-sep-A} to the fibration $f^{\circ}\colon X_{S^{\circ}} \to S^{\circ}$, we obtain that $-D^{\circ} \geQ \frac12 K_{S^{\circ}}\geQ 0$.
		Thus both $D$ and $-D$ are pseudo-effective, so $D^\circ \simQ 0$ and therefore $K_S \simQ 0$, which implies that $S$ is isomorphic to an abelian variety.
		
		Finally, assume both $f$ and $a_S$ are inseparable.
		We can prove the statements (ii) and (iii) by applying a similar argument as in Case~(1.3).
		\end{proof}
		
		To finish the proof of Theorem~\ref{thm:S-abelian}, let us treat the remaining case. We argue by contradiction and assume $\kappa(S) >0$. We only need to consider the case
		\begin{itemize}
		\item $X_{K(S_1)}$ is non-reduced, $S\to A$ is inseparable and $S_1 = A^{\oneoverp}$.
		\end{itemize}
		
		By Proposition \ref{prop:basic-of-X-S-A} (4), we may write that
		$$\pi^*K_X  \sim_{\mathbb{Q}} K_{X_1} + (p-1)\mathfrak{C},$$
		where the movable part $\mathfrak M$ of the linear system $\mathfrak{C} = \mathfrak M + V$ has $\deg_{K(S_1)} \mathfrak M >0$, and then
		$$\pi^*(K_X + \Delta) \sim_{\mathbb{Q}}  K_{X_1} + \mathfrak M + \Delta_1,$$
		where $\Delta_1 = \pi^*\Delta + (p-2)M_0 + (p-1)V$ for some $M_0\in\mathfrak M$.
		
		\medskip{
		Claim. \it $\mathfrak{M}$ is base-point-free with $\nu(\mathfrak M) =1$ and hence induces a fibration
		$g\colon X_1\to \mathbb P^1$. Moreover, a general fiber $G_t$ of $g$ is isomorphic to an abelian variety, and $\Delta_1|_{G_t} \simQ 0$.
		}\medskip
		
		This assertion can be deduced by applying Proposition~\ref{prop:str-num0} to the pair $(X_1, \mathfrak{M} + \Delta_1)$, provided that the following condition holds:
		\begin{itemize}
		\item
		if $((X_1)_{K(S_1)}, (\Delta_1)_{K(S_1)})$ is not klt, say, $T_1$ is the unique irreducible horizontal component of $\Delta_1$, then $T_1|_{T^\nu_1}$ is pseudo-effective, where $T^\nu_1$ is the normalization of $T_1$.
		\end{itemize}
		Let $T$ be the prime divisor supported on $\pi(T_1)$ and $T^\nu$ denote the normalization of $T$. Denote by $\pi_{T^\nu_1}\colon T^\nu_1 \to T^\nu$ the natural morphism. We may write that $\pi^*T = c T_1$ for some $c>0$.
		Let $a$ be the coefficient of $T$ in $\Delta$. Then $a<1$ and by Lemma~\ref{lem:log-adj} we have
		$$cT_1|_{T^\nu_1} \sim_{\mathbb{Q}} \pi_{T_1^\nu}^*(T|_{T^\nu})
		\sim_{\mathbb{Q}} \pi_{T^\nu_1}^*\biggl(\frac{1}{1-a}\Bigl(K_{T^\nu} + B_{T^\nu} - (K_X+\Delta)|_{T^\nu}\Bigr)\biggr).$$
		This divisor is pseudo-effective since $K_{T^\nu} \geQ 0$  and $- (K_X+\Delta)$ is assumed nef.

		\smallskip
		Granted the above Claim, the condition $\Delta_1|_{G_t} \simQ 0$ implies that
		$$\det \mathcal{F}_{X_1/X}|_{G_t}= -\mathfrak{C}|_{G_t} \simQ 0.$$
		Applying Lemma~\ref{lem:fol-fibr}, we can show that $S$ is an abelian variety, which contradicts our assumption.

		\subsection{Proof of Theorem~\ref{thm:main-thm-cbf}} \label{sec:proof-cbf}
		If the Albanese morphism $a_S\colon S \to A$ is separable, we can apply Theorem~\ref{thm:base-sep-A}. If $a_S$ is inseparable, we have $\kappa(S,D)\ge0$ by Theorem~\ref{thm:insep-A}.
		
		Now assume that $a_S\colon S \to A$ is finite and inseparable and that $(X_{K(S)},\Delta_{K(S)})$ is klt. To show $\kappa(S,D)\ge1$, we argue by contradiction, assuming that $\kappa(S,D)=0$.
		Then by Theorem~\ref{thm:insep-A}, the Frobenius map $A^{\oneoverp}\to A$ factors into $ A^{\oneoverp}\buildrel\tau\over\to S \buildrel a_S\over\to A$.
		By Covering Theorem~\ref{thm:covering}, $\kappa(A^{\oneoverp}, \tau^* D) = \kappa(S,D)=0$.
		Note that any effective divisor on an abelian variety is semi-ample, thus $\tau^*D \simQ 0$.
		This implication yields $D \sim_{\mathbb{Q}} 0$, which leads to $K_X + \Delta \equiv 0$.
		Applying Theorem~\ref{thm:S-abelian}, we deduce that $S$ must be an abelian variety.
		Consequently, the morphism $a_S$ becomes an isomorphism, a contradiction.

\section*{Acknowledgments}
		The authors would like to thank the referee for giving many helpful comments to correct errors and improve the presentation and the proof. This research is partially supported by CAS Project for Young Scientists in Basic Research (No. YSBR-032), National Key R and D Program of China (No. 2020YFA0713100) and NSFC (No.12122116 and No. 12471495).
		The first author is also supported by Hubei Minzu University (Grant No. XN24040).

\bibliographystyle{plain}

	\end{document}